\tikzset{
    >=stealth,
    every picture/.style={thick},
    graphs/every graph/.style={empty nodes},
}
\tikzstyle{vertex}=[
\tikzstyle{printersafe}=[decoration={snake,amplitude=0pt}]
\newcommand{\Proj}{\operatorname{Proj}}
\newcommand{\supp}{\operatorname{supp}}
\newcommand{\Spec}{\operatorname{Spec}}
\newcommand{\ord}{\operatorname{ord}}
\newcommand{\vol}{\operatorname{vol}}
\newcommand{\gr}{\operatorname{gr}}
\renewcommand{\AA}{\mathbb{A}}
\newcommand{\PP}{\mathbb{P}}
\newcommand{\QQ}{\mathbb{Q}}
\newcommand{\ZZ}{\mathbb{Z}}
\newcommand{\NN}{\mathbb{N}}
\newcommand{\RR}{\mathbb{R}}
\newcommand{\CC}{\mathbb{C}}
\newcommand{\GG}{\mathbb{G}}
\newcommand{\OO}{\mathcal{O}}
\newcommand{\mm}{\mathfrak{m}}
\def\O#1.{\mathcal {O}_{#1}}			
\def\pr #1.{\mathbb P^{#1}}				
\def\af #1.{\mathbb A^{#1}}			
\def\ses#1.#2.#3.{0\to #1\to #2\to #3 \to 0}	
\def\xrar#1.{\xrightarrow{#1}}			
\def\K#1.{K_{#1}}						
\def\bA#1.{\mathbf{A}_{#1}}			
\def\bM#1.{{\mathbf{M}}_{#1}}
\def\bN#1.{{\mathbf{N}}_{#1}}
\def\bL#1.{\mathbf{L}_{#1}}				
\def\bB#1.{\mathbf{B}_{#1}}				
\def\bK#1.{\mathbf{K}_{#1}}			
\def\subs#1.{_{#1}}					
\def\sups#1.{^{#1}}
  \newtheorem{theorem}{Theorem}[section]
  \newtheorem{lemma}[theorem]{Lemma}
  \newtheorem{proposition}[theorem]{Proposition}
  \newtheorem{corollary}[theorem]{Corollary}
  \newtheorem{conjecture}[theorem]{Conjecture}
  \newtheorem{definition}[theorem]{Definition}
  \newtheorem{example}[theorem]{Example}
  \newtheorem{question}[theorem]{Question}
  \newtheorem{construction}[theorem]{Construction}
\newtheorem{remark}[theorem]{Remark}
\theoremstyle{remark}
\numberwithin{equation}{section}
\begin{document}

\title[$\GG_m$-Equivariant Degenerations of del Pezzo Surfaces]{$\GG_m$-Equivariant Degenerations of del Pezzo Surfaces}

\author[J.~Peng]{Junyao Peng}
\address{Department of Mathematics, Princeton University, Fine Hall, Washington Road, Princeton, NJ 08544-1000, USA
}
\email{junyaop@princeton.edu}

\begin{abstract}
We study special $\GG_m$-equivariant degenerations of a smooth del Pezzo surface $X$ induced by valuations that are log canonical places of $(X,C)$ for a nodal anti-canonical curve $C$. We show that the space of special valuations in the dual complex of $(X,C)$ is connected and admits a locally finite partition into sub-intervals, each associated to a $\GG_m$-equivariant degeneration of $X$. This result is an example of higher rank degenerations of log Fano varieties studied by Liu-Xu-Zhuang, and verifies a global analog of a conjecture on Koll\'ar valuations raised by Liu-Xu. For del Pezzo surfaces with quotient singularities, we obtain a weaker statement about the space of special valuations associated to a normal crossing complement. 
\end{abstract}

\maketitle

\setcounter{tocdepth}{1} 
\tableofcontents

\section{Introduction}
For a normal projective variety $X$, a degeneration of $X$ is a variety $X'$ with a flat family $\mathcal{X}\to C$ over a smooth pointed curve $0\in C$, such that the general fiber $\mathcal{X}_t$ is isomorphic to $X$ for $t\neq 0$ and the central fiber $\mathcal{X}_0$ is isomorphic to $X'$. A degeneration is $\QQ$-Gorenstein if $K_{\mathcal{X}}$ is $\QQ$-Cartier. It is then natural to ask whether one can classify all $\QQ$-Gorenstein degenerations of $X$. This question is closely related to the local geometry of moduli spaces of varieties. When $X$ is a variety of general type, any $\QQ$-Gorenstein degeneration of $X$ is trivial, which implies that the KSBA compactification of moduli space of varieties of general type is always Deligne-Mumford \cite{KSB88}\cite{Ale94}\cite{Kol23}. 

When $X$ is the projective plane $\PP^2$, the classification of degenerations of $X$ is a non-trivial problem. Manetti \cite{Man91} classified all normal degenerations of $\PP^2$ with quotient singularities in terms of numerical invariants and quotient types. Hacking and Prokhorov \cite{HP10} proved that $\QQ$-Gorenstein degenerations of $\PP^2$ must be partial smoothings of weighted projective spaces $\PP(a^2,b^2,c^2)$, where $a,b,c$ satisfy the Markov equation
\[
a^2+b^2+c^2 = 3abc.
\]
Solutions of the Markov equations are generated via mutations of the form $(a,b,c) \mapsto(a,b,3ab-c)$ and thus form a tree structure. Based on results about flips on a family of surfaces \cite{KM92}\cite{Mor02}\cite{HTU17}, Urzúa and Zúñiga \cite{UZ23} studied the Markov tree and showed that different degenerations of $\PP^2$ are related to each other by a sequence of explicit birational operations. Furthermore, $\QQ$-Gorenstein degenerations of $\PP^2$ which admit a non-trivial torus action are known to be either $\PP(a^2,b^2,c^2)$ for a Markov triple $(a,b,c)$ or a hypersurface of a weighted projective space \cite{HKW24}. 

For general del Pezzo surfaces, $\QQ$-Gorenstein degenerations are studied via $\QQ$-Gorenstein smoothings of singular surfaces. Hacking and Prokhorov \cite{HP10} classified $\QQ$-Gorenstein smoothable del Pezzo surfaces with quotient singularities and Picard rank 1. Prokhorov \cite{Pro19} then extended this result to the case of log canonical singularities. Other results in this direction include the higher Picard rank case \cite{Pro15} and the case of Wahl singularities \cite{Wah81}\cite{Urz16}\cite{Hac16}.

The goal of this paper is to study $\GG_m$-equivariant degenerations of del Pezzo surfaces, which are known as test configurations in the algebraic theory of K-stability. Roughly speaking, a test configuration is a flat $\GG_m$-equivariant family $\mathcal{X}\to \AA^1$ with general fiber $\mathcal{X}_t\cong X$. Via the Rees construction, these $\GG_m$-equivariant degenerations are induced by certain divisorial valuations which can be characterized by singularities of the MMP \cite[Theorem 4.12]{Xu21}. In the proof of the optimal destabilization conjecture \cite[Theorem 1.2]{LXZ22}, the authors studied higher rank versions of $\GG_m$-equivariant degenerations of Fano varieties. These higher rank degenerations of a Fano variety $X$ are induced by certain \textit{special quasi-monomial} valuations over $X$, which can be thought of as limits of divisorial valuations in the dual complex of $(X,\Delta)$ for some $\QQ$-complement $\Delta\sim_{\QQ}-K_X$. These special quasi-monomial valuations have characterizations similar to the divisorial case \cite[Theorem 4.2]{LXZ22}. However, the structure of the space of special valuations is not so well-understood.

\begin{question}[\text{cf. \cite[Question 1.5]{LX24}}]\label{question:special_valuations}
{\em
Let $X$ be a klt Fano variety and $\Delta\sim_{\QQ}-K_X$ be a $\QQ$-complement of $X$. Let $r\in \NN$ be such that $rK_X$ is Cartier. Let $\mathcal{D}(X,\Delta)$ be the dual complex of $(X,\Delta)$. Let $\mathcal{D}^{KV}(X,\Delta)\subseteq \mathcal{D}(X,\Delta)$ be the subset consisting of quasi-monomial valuations $v$ (up to scaling) which satisfy the following conditions:
\begin{itemize}
\item The ring
\[
\bigoplus_{m\in r\NN} \gr_v^\bullet H^0(X, -mK_X) 
\]
is finitely generated as a graded algebra.
\item The scheme define by
\[
\Proj \left(\bigoplus_{m\in r\NN} \gr_v^\bullet H^0(X, -mK_X) \right)
\]
is a klt Fano variety.
\end{itemize}
We ask the following questions:
\begin{itemize}
\item[(1)] What is the set $\mathcal{D}^{KV}(X,\Delta)$?
\item[(2)] As a subspace of $\mathcal{D}(X,\Delta)$, what is the topological structure of $\mathcal{D}^{KV}(X,\Delta)$?
\item[(3)] What are the degenerations of $X$ induced by $v\in \mathcal{D}^{KV}(X,\Delta)$? In other words, what are the Fano varieties defined by
\[\Proj \left(\bigoplus_{m\in r\NN} \gr_v^\bullet H^0(X, -mK_X) \right)?
\]
\end{itemize}
}
\end{question}

In \cite{LX24}, the authors asked similar questions in the local setting of a klt singularity $(X,\Delta;x)$. They also studied the analogous spaces $\mathcal{D}^{KV}(X,\Delta;x)$ when the dual complex of $(X,\Delta)$ is one-dimensional, and conjectured that $\mathcal{D}^{KV}(X,\Delta;x)$ admits a natural triangulation with certain finiteness property \cite[Conjecture 1.8]{LX24}.
Furthermore, explicit examples of $\mathcal{D}^{KV}(X,\Delta)$  were studied in the case that $X = \PP^2$ and $\Delta$ is a nodal cubic \cite[Theorem 6.1]{LXZ22}, or $X = \PP^2$ and $\Delta$ is the sum of a smooth conic and a transversal line \cite[Proposition A.4]{ABBDILW23}. Motivated by these results, we propose the following conjecture:
\begin{conjecture}[\text{cf. \cite[Conjecture 1.8]{LX24}}]\label{conj:structure_of_special_valuations}
Let $(X,D)$ be a klt Fano pair and $\Delta\sim_{\QQ} -(K_X+D)$ be a $\QQ$-complement of $(X,D)$. Then there is a rational triangulation of $\mathcal{D}^{KV}(X, D+\Delta)$ (notation as in Question) which satisfies the following properties:
\begin{itemize}
\item In the interior of $\mathcal{D}^{KV}(X,D+\Delta)$, this triangulation is locally finite.
\item Let $C^{\circ}$ be an open simplex of this triangulation. Then any valuation (up to scaling) in $C^{\circ}$ induces a $\GG_m$-equivariant degenerations of $X$ into a klt Fano variety. Furthermore, any two valuations in $C^\circ$ induce isomorphic $\GG_m$-equivariant degenerations.
\end{itemize}
\end{conjecture}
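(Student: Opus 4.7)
The plan is to prove the conjecture in the setting studied in this paper: $(X,D)=(X,0)$ a smooth del Pezzo surface and $\Delta=C$ an irreducible nodal anti-canonical curve. Since $C$ has a single node, $\mathcal{D}(X,C)$ is one-dimensional -- a circle with a single vertex and a single edge. The desired rational triangulation therefore reduces to a locally finite partition of $\mathcal{D}^{KV}(X,C)\subseteq\mathcal{D}(X,C)$ into sub-arcs, each labelled by a single $\GG_m$-equivariant degeneration. After resolving the node and separating the two analytic branches $C_1,C_2$, every quasi-monomial valuation along the edge has the form $v_{a,b}=a\,\ord_{C_1}+b\,\ord_{C_2}$, and modulo rescaling I would parametrize them by $t=a/(a+b)\in[0,1]$ (glued into a circle at the endpoints).

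For each $v_t$ I would study the Rees algebra $R_t=\bigoplus_m \gr_{v_t}^{\bullet}H^0(X,-mK_X)$ via the characterization of Koll\'ar valuations from \cite[Theorem 4.2]{LXZ22}: $v_t$ lies in $\mathcal{D}^{KV}(X,C)$ precisely when one can extract $v_t$ on a suitable birational model $Y_t\to X\times\AA^1$ and run a relative MMP that terminates with a klt Fano central fiber. I would construct $Y_t$ explicitly via weighted blow-ups at the node with weights reflecting $t$, and then run the $K$-MMP. The chamber decomposition follows from a continuity principle: as $t$ varies in an open sub-arc on which no extremal ray of the relative MMP crosses a wall, the sequence of divisorial contractions and flips is combinatorially constant, producing a single flat $\GG_m$-equivariant family whose fibers are all isomorphic up to rescaling the $\GG_m$-action. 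Walls correspond precisely to those $t$ at which the MMP trajectory branches, which gives the partition into open simplices required by the conjecture.

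The step I expect to be the main obstacle is proving local finiteness of the wall structure in the interior of $\mathcal{D}^{KV}(X,C)$. A priori, walls could accumulate at some limit value $t_0$, and ruling this out requires controlling the combinatorics of the relative MMP globally in $t$. I would attack this using two complementary boundedness inputs: first, the anti-canonical volume $(-K_X)^2$ is preserved under $\QQ$-Gorenstein degeneration, so all possible klt Fano central fibers lie in a bounded family of del Pezzo surfaces; second, the relative MMP on surfaces has tightly controlled combinatorics -- flips in relative dimension one terminate after finitely many steps and each divisorial contraction strictly drops the Picard rank. Combining these inputs should bound the Picard rank of any intermediate model uniformly on each compact subarc of the interior, which in turn forces only finitely many walls. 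The boundary behavior (where $\mathcal{D}^{KV}$ may fail to be closed in $\mathcal{D}$) is not controlled by this argument, consistently with the conjecture only asking for local finiteness in the interior.
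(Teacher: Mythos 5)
Your overall architecture --- parametrize the edge of $\mathcal{D}(X,C)$ by $t$, show that the special valuations form sub-arcs on each of which the induced degeneration is constant, and identify the walls --- matches the paper's strategy (Propositions~\ref{prop: special_divisor_dP_surface_criterion}--\ref{prop: special_valuation_dP_surface_criterion} and Theorem~\ref{mainthm:structure_of_the_space_of_special_valuations}). But there are two genuine gaps. First, for irrational $t$ the valuation $v_t$ is not divisorial, so you cannot ``extract $v_t$ on a birational model $Y_t\to X\times\AA^1$'' and run a relative MMP; the finite generation of $\gr_{v_t}^\bullet R$ is exactly what is in question for such higher-rank valuations. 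The paper handles this by extracting two nearby rational divisors $E_{t_1},E_{t_2}$, showing that $(Y,E_{t_1}+E_{t_2})\to X$ is a qdlt Fano type model whenever $\sum_i a_iC_{i,t}$ stays ample for $t\in(t_1,t_2)\cap\QQ$, and then invoking the finite-generation theorem for qdlt Fano type models (Theorem~\ref{thm:qdlt_fano_type_model_and_higher_rank_degeneration}). Your proposal has no substitute for this step.

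Second, and more seriously, your local-finiteness argument does not work as stated. Boundedness of the possible central fibers and of the Picard ranks of intermediate models does not bound the number of walls on a compact subarc: on a fixed surface of bounded Picard rank there are infinitely many curve classes, so the extremal ray being contracted could change at infinitely many values of $t$ without violating either boundedness input. Indeed, in the paper's own example ($X=\mathrm{Bl}_p\PP^2$, Section 5) there are infinitely many walls $g_{k+3}/g_k$; the content of the theorem is that they accumulate only at the endpoints $3\pm2\sqrt{2}$ of $I_8$, which form $\partial\mathcal{D}^{KV}(X,C)$. The mechanism the paper uses is entirely different from yours: Lemma~\ref{lemma:big_and_nef_and_not_ample_equivalent_to_existence_of_unicuspidal_curves} shows that a wall at $t=q/p$ forces the existence of a $(p,q)$-unicuspidal rational curve $D$ well-formed with respect to $(C,x)$ with $-K_X\cdot D=p+q$ and $D^2=pq-1$, and Lemma~\ref{lemma:accumulation_points_unicuspidal_curves} then applies the Hodge index theorem to get $(p+q)^2=dpq-m$ with $1\leq m\leq d$, so the wall slopes lie on finitely many Pell-type conics whose solutions accumulate only at $\frac{d-2\pm\sqrt{d^2-4d}}{2}$, i.e.\ exactly at $\partial I_d$. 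Without identifying the walls with some rigid arithmetic data of this kind, your argument cannot rule out accumulation of walls in the interior.
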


The main purpose of this article is to answer Question~\ref{question:special_valuations} and Conjecture~\ref{conj:structure_of_special_valuations} when $X$ is a smooth del Pezzo surface and $\Delta\sim -K_X$ is a normal crossing divisor on $X$.

\begin{theorem}\label{mainthm:classification_of_special_valuations_on_del_pezzo_surfaces}
Let $X$ be a smooth del Pezzo surface of degree $d$. Let $C\sim -K_X$ be a normal crossing divisor on $X$. Suppose $C$ is nodal and $x\in C$ is a node. Let $v\in \text{QM}(X,C)$ be a quasi-monomial valuation over $X$. 
\begin{itemize}
\item [(a)] Suppose $C$ is irreducible. For $t>0$, let $v_t$ denote the quasi-monomial valuation centered at $x$ with weight $(1,t)$ along the two branches of $C$. Then $v$ is special over $X$ if and only if $d\geq 5$ and $v = v_t$ for some $t\in I_d:= \left(\frac{d-2-\sqrt{d^2-4d}}{2}, \frac{d-2+\sqrt{d^2-4d}}{2}\right)$.
\item [(b)] Suppose $C$ has two irreducible components $C_1, C_2$ and both are nef. Then $v$ is special over $X$ if and only if it is not one of the following cases:
\begin{itemize}
\item $d = 4$.
\item $d\geq 5$, $C_1^2 = 0$, and $v = \ord_{C_2}$.
\item $d\geq 5$, $C_2^2 = 0$, and $v = \ord_{C_1}$.
\end{itemize}
\item [(c)] Suppose $C$ has $k\geq 3$ irreducible components $C_1,\ldots, C_k$ and $C_i$ is nef for all $1\leq i\leq k$. Then $k = 3,4$ and $v$ is special. 
\item [(d)] Suppose that some irreducible component of $C$ is not nef. Let $\phi: X\to X'$ be a morphism which only contracts components of $C$ and every component of $C':= \phi(C)$ is nef. Then $v$ is special over $X$ if and only if $v$ is special over $X'$. 
\end{itemize}
\end{theorem}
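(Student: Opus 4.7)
The plan is to characterize $v$ as special via the klt Fano property of the induced degeneration $X_0 = \Proj\bigoplus_m \gr^\bullet_v H^0(X, -mK_X)$, and to compute $X_0$ explicitly via local toric models on each stratum of $\mathcal{D}(X,C)$, handling the four parts in the order (d), (a), (c), (b). Part (d) is a formal reduction: since $C \sim -K_X$, any birational morphism $\phi: X\to X'$ contracting only components of $C$ is crepant, and the graded algebras $\bigoplus_m \gr_v^\bullet H^0(X,-mK_X)$ and $\bigoplus_m \gr_{\phi_* v}^\bullet H^0(X',-mK_{X'})$ are canonically identified via $\phi_*\OO_X(-mK_X) = \OO_{X'}(-mK_{X'})$, so the induced degenerations coincide.

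For part (a), I would construct $X_0$ explicitly from the weighted blowup of the node $x$ with weights $(1,t)$: this extracts the divisor realizing $v_t$ as an lc place, and a Mori-theoretic contraction of the proper transform of $C$ yields the test configuration. The Fano/klt conditions on $X_0$ then translate, using $C^2 = d$ and the combinatorics of the contraction, to the quadratic inequality
\[
t^2 - (d-2)t + 1 < 0.
\]
The roots of this quadratic are exactly the endpoints of $I_d$, and the discriminant condition $(d-2)^2 \geq 4$ forces $d\geq 5$ for $I_d$ to be non-empty.

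For part (c), I would first derive the combinatorial bound $k\leq 4$: since $C$ is a nodal cycle of length $k\geq 3$ with $C_i\cdot C_{i+1}=1$, the formula
\[
d = C^2 = \sum_i C_i^2 + 2k
\]
combined with nef-ness $C_i^2\geq 0$ gives $k\leq d/2\leq 9/2$. For each quasi-monomial $v\in\mathcal{D}(X,C)$, I would then verify that the toric degeneration is klt Fano; the key point is that each $C_i^2\leq d-2k$ is small enough to avoid the numerical obstruction of part (a). Part (b) is the most case-specific. A toric computation at each node with both weights positive shows $X_0$ is klt Fano for $d\geq 5$. For the divisorial valuations $\ord_{C_i}$, the anti-canonical divisor of $X_0$ becomes proportional to $C_{3-i}$, so ampleness forces $C_{3-i}^2>0$, giving the second and third exclusions. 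The case $d=4$ fails because $I_4$ is empty and the same numerical obstruction propagates along the nodal strata.

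The main obstacle is the explicit construction of $X_0$ in part (a) and the verification that its Fano/klt condition reduces to the precise inequality $t^2 - (d-2)t + 1 < 0$. This demands a careful analysis of the weighted blowup at the node, the contraction of the proper transform of $C$, and intersection theory on the resulting toric model, relating every numerical invariant to $d$ and $t$. Parts (b) and (c) then follow by analogous but more intricate stratum-by-stratum bookkeeping on $\mathcal{D}(X,C)$, with the extra subtlety in (b) that divisorial valuations can produce non-Fano degenerations precisely when the remaining boundary component satisfies $C_i^2 = 0$.
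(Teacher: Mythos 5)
There are genuine gaps in three places, and they are exactly where the paper's real work lies. First, your reduction for part (d) rests on a false identification: if $\phi:X\to X'$ contracts a $(-1)$-curve $C_0\subseteq C$, then $K_X=\phi^*K_{X'}+C_0$, so $\phi_*\OO_X(-mK_X)$ is the subsheaf of $\OO_{X'}(-mK_{X'})$ of sections vanishing to order $m$ at $\phi(C_0)$; the two anticanonical rings have different Hilbert functions (indeed $(-K_X)^2=d$ while $(-K_{X'})^2=d+1$), so the induced degenerations cannot coincide and the graded algebras are not canonically identified. The theorem only claims the \emph{equivalence of specialness}, and the paper proves this through a genuinely nontrivial comparison of the numerical criteria on $Y_t$ and $Y_t'$ (Lemma~\ref{lemma:equivalence_of_special_divisors_after_contracting_-1_curves}, Lemmas~\ref{lemma:equivalence_of_positivity_of_intersection_product_after_contraction}--\ref{lemma:equivalence_of_special_valuations_after_contraction}, Corollary~\ref{cor:equivalence_of_special_valuations_after_contraction}), including tracking how the relevant obstruction curves transform under $\phi$.

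Second, and more fundamentally, your treatment of parts (a)--(c) only makes sense for divisorial valuations: for irrational $t$ there is no ``weighted blowup with weights $(1,t)$,'' and the existence of the degeneration $\Proj\bigoplus_m\gr_{v_t}^\bullet H^0(X,-mK_X)$ presupposes finite generation of the associated graded ring, which is the central difficulty of the higher-rank case and is never established in your proposal. The paper's route is to exhibit a qdlt Fano type model $(Y,E_{t_1}+E_{t_2})\to X$ extracting two nearby divisorial valuations and to invoke Theorem~\ref{thm:qdlt_fano_type_model_and_higher_rank_degeneration}; for this one needs $\sum_i a_iC_{i,t}$ to be \emph{ample} (not merely big and nef, which is all your inequality $t^2-(d-2)t+1<0$, equivalently $C_t^2>0$, gives) for all rational $t$ in an interval around $t_0$. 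The obstruction to ampleness is precisely the existence of $(p,q)$-unicuspidal rational curves well-formed with respect to $(C,x)$ (Lemma~\ref{lemma:big_and_nef_and_not_ample_equivalent_to_existence_of_unicuspidal_curves}), and one must show via the Hodge index theorem that the corresponding parameters $q/p$ do not accumulate at interior points of $I_d$ (Lemma~\ref{lemma:accumulation_points_unicuspidal_curves}). This mechanism is entirely absent from your proposal, and without it the ``if'' direction for non-divisorial $v$ does not go through. (Your combinatorial bound $k\le 4$ in part (c) via $d=\sum_iC_i^2+2k$ is fine, and your quadratic in part (a) is the correct bigness condition; the issue is everything that has to happen after that.)
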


\begin{theorem}\label{mainthm:structure_of_the_space_of_special_valuations}
Let $X$ be a smooth del Pezzo surface. Let $C\sim -K_X$ be a normal crossing divisor on $X$ which is not smooth. Let $\mathcal{D}(X,C)$ be the dual complex of $(X,C)$ and $\mathcal{D}^{KV}(X,C)$ be the subset of $\mathcal{D}(X,C)$ consisting of special valuations over $X$\footnote{By Remark~\ref{remark:dual_complex_qm_vals}, we may think of $\mathcal{D}(X,C)$ as the space of quasi-monomial valuations in $\text{QM}(X,C)$ up to scaling. Notice that any rescaling of a special valuation is still special since the associated graded anticanonical ring remains unchanged.}. Then
\begin{itemize}
\item[(a)] $\mathcal{D}(X,C)$ is homoemorphic to $S^1$. Under this identification, $\mathcal{D}^{KV}(X,C)$ is either empty, $S^1$, or an open interval $I\subsetneq S^1$. 
\item[(b)] If $\mathcal{D}^{KV}(X,C)$ is $S^1$, then there exists finitely many disjoint open intervals $I_1,\ldots, I_n\subseteq \mathcal{D}^{KV}(X,C)$ such that
\[
\mathcal{D}^{KV}(X,C) = \overline{I_1\cup I_2\cup\cdots\cup I_n}.
\]
Furthermore any two valuations in the same interval $I_k$ induce isomorphic $\GG_m$-equivariant degenerations of $X$. 

\item [(c)] If $\mathcal{D}^{KV}(X,C)$ is an open interval $I\subsetneq S^1$, then there exists countably many disjoint open intervals $\{I_k\subseteq I: k\in \ZZ\}$ such that
\[
\mathcal{D}^{KV}(X,C) = \overline{\bigcup_{k\in \ZZ} I_k}
\]
and every accumulation point of the set $\bigcup_{k\in \ZZ} \partial I_k$ lies in $\partial I$. Furthermore, any two valuations in the same interval $I_k$ induce isomorphic $\GG_m$-equivariant degenerations of $X$. 

\item [(d)] In (b) and (c), suppose $t\in \partial I_k$ for some $k$, then $t$ corresponds to a divisorial valuation $\ord_E$ on $X$. Furthermore, suppose $E$ is the $(p,q)$ weighted blow-up along two branches of $C$ at a node $x\in C$, then there exists a $(p,q)$-unicuspidal rational curve on $X$ which is well-formed with respect to $(C,x)$.
\end{itemize}
\end{theorem}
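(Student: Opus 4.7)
For part (a), the plan is to first identify $\mathcal{D}(X,C)$ topologically: since $C \sim -K_X$ is SNC and not smooth, it is either a nodal rational curve or a cycle of smooth rational curves, and in either case the dual complex is homeomorphic to $S^1$. The identification of $\mathcal{D}^{KV}(X,C)$ then follows by case-by-case application of Theorem~\ref{mainthm:classification_of_special_valuations_on_del_pezzo_surfaces}: in case (a) it is empty or the open subarc corresponding to $I_d$; in case (b) with $d=4$ it is empty, and with $d \geq 5$ it is $S^1$ minus possibly one vertex. A quick intersection-theoretic computation using $(C_1+C_2)^2 = d$ together with $C_1 \cdot C_2 = 2$ (forced by the cyclic dual complex) rules out both $C_i^2 = 0$ except when $d=4$, so at most one vertex can be excluded. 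In case (c) the set $\mathcal{D}^{KV}$ equals all of $S^1$, and case (d) reduces to one of the above after contracting the non-nef components.

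For parts (b) and (c), I would analyze how the associated graded ring $\bigoplus_m \gr_v^\bullet H^0(X,-mK_X)$ varies as $v$ moves through $\mathcal{D}^{KV}$. Two valuations in the same open ``chamber'' of a Newton--Okounkov-type decomposition should yield isomorphic graded rings and hence isomorphic $\GG_m$-equivariant degenerations. The walls between chambers correspond to divisorial valuations $\ord_E$ coming from $(p,q)$-weighted blow-ups at the nodes of $C$, and part (d) shows that each such wall records a $(p,q)$-unicuspidal rational curve on $X$. In case (b), only finitely many walls arise, essentially because each wall produces a distinct curve on $X$ subject to a finite classification. In case (c), countably many walls may occur, and the main point is to show that the discrete family of $(p,q)$-unicuspidal witnesses cannot accumulate toward an interior point of $I$.

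For part (d), I would first argue that each boundary point $t \in \partial I_k$ is divisorial rather than irrational: an irrational wall would contradict the finite generation of $\gr_v R_X$ on nearby special valuations. Suppose $t$ corresponds to the $(p,q)$-weighted blow-up $E$ at a node $x \in C$. The wall-crossing at $t$ should produce $m \geq 1$ and a section $s \in H^0(X,-mK_X)$ whose strict transform on $\widetilde{X} \to X$ achieves the extremal order along $E$. Tracing such an extremal section back produces a reduced irreducible divisor $B \in |{-mK_X}|$ with maximal tangency only along $E$; combining adjunction with the log Calabi-Yau nature of $(X,C)$ should then force $B$ to be rational with a single $(p,q)$-cusp at $x$, and the ``well-formed with respect to $(C,x)$'' condition encodes the SNC compatibility between $B$ and $C$ outside $x$.

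The hardest step will be the accumulation statement in (c) together with the construction in (d): ruling out interior accumulation demands quantitative control of the $(p,q)$-weighted blow-ups through their associated curves $B$ on $X$, while establishing the existence of such $B$ is itself the content of (d). For this reason I would prove (c) and (d) in tandem, most naturally via a simultaneous induction or mutation analysis modeled on the Markov-mutation picture found for $\PP^2$ with a nodal cubic in \cite{LXZ22}.
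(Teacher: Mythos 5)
Your part (a) matches the paper: it is exactly a case-by-case application of Theorem~\ref{mainthm:classification_of_special_valuations_on_del_pezzo_surfaces}, and your observation that at most one vertex is removed when $d\geq 5$, $k=2$ is correct. But for (b), (c), (d) your proposal replaces the paper's concrete mechanisms with assertions, and in several places the asserted mechanism would not go through as stated. First, the claim that ``two valuations in the same open chamber of a Newton--Okounkov-type decomposition should yield isomorphic graded rings'' is the conclusion, not an argument. The paper's actual engine is Proposition~\ref{prop: special_valuation_dP_surface_criterion}: for $t_1<t_2$ rational one builds the model $Y$ extracting both $E_{t_1}$ and $E_{t_2}$, shows $\sum a_iC_{i,Y}$ is big and nef and that any curve orthogonal to it avoids $E_{t_1}\cap E_{t_2}$, and concludes that $(Y,E_{t_1}+E_{t_2})\to X$ is a qdlt Fano type model; Theorem~\ref{thm:qdlt_fano_type_model_and_higher_rank_degeneration} then gives finite generation and the isomorphism of the degenerations for all $v_t$ with $t\in(t_1,t_2)$. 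Without identifying the chambers as the intervals on which $\sum a_iC_{i,t}$ stays ample (Lemma~\ref{lemma: positivity_of_intersection_product} plus Lemma~\ref{lemma:big_and_nef_and_not_ample_equivalent_to_existence_of_unicuspidal_curves}), your decomposition is not defined and the isomorphism statement has no proof.

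Second, the finiteness in (b) and the non-accumulation in (c) are the genuinely hard points, and ``each wall produces a distinct curve subject to a finite classification'' begs the question: in case (c) the set of walls is infinite, so no finite classification exists, and what must be shown is that the slopes $q/p$ of the $(p,q)$-unicuspidal curves accumulate only at $\partial I$. The paper gets this from the Hodge index theorem applied to $D$ against nef combinations of the $C_i$ (Lemma~\ref{lemma:accumulation_points_unicuspidal_curves}), yielding Diophantine constraints such as $(p+q)^2=dpq-m$ with $1\leq m\leq d$, or $p\leq 2$. Your alternative --- a Markov-style mutation induction --- is specific to $\PP^2$ and is not developed here; as written it is a plan, not a proof. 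Third, your rationality argument in (d) does not work: Theorem~\ref{thm:rational_space_containing_special_valuation_is_special} says nearby points of the minimal \emph{rational} subspace of a special valuation are special, which does not preclude an irrational boundary point of a maximal special interval. In the paper, rationality of $\partial I_k$ is automatic because the partition points are taken from the discrete set $T$ of slopes of unicuspidal curves (together with the vertices $\ord_{C_i}$), and the existence of the unicuspidal curve at each wall is obtained directly from the failure of ampleness of the big and nef divisor $\sum a_iC_{i,t}$ via adjunction on $Y_t$ (Lemma~\ref{lemma:big_and_nef_and_not_ample_equivalent_to_existence_of_unicuspidal_curves}), not by hunting for an extremal section of $|-mK_X|$.
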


\begin{remark}
{\em
Unicuspidal curves as in Theorem~\ref{mainthm:structure_of_the_space_of_special_valuations}(d) also play an important role in the study of the ellipsoid embedding problem in symplectic geometry \cite{MS12}. They provide obstructions to the embedding of 4-dimensional symplectic ellipsoids into del Pezzo surfaces (considered as 4-dimensional symplectic varieties) \cite{MS24}. The classification of unicuspidal curves in \cite{CGHMP24} then implies that the ellipsoid embedding function has an infinite staircase pattern. It would be interesting if one can show a deeper connection between the ellipsoid embedding problem and the structure of special quasi-monomial valuations in more general cases.
}
\end{remark}

When $X$ is a del Pezzo surface with klt singularities, we prove a result about the local structure of the dual complex near a special divisorial valuation. Roughly speaking, it says that any valuation which is sufficiently close to a special divisorial valuation is also special.
\begin{theorem}\label{mainthm:local_structure_of_dual_complex_of_special_degenerations}
Let $X$ be a klt del Pezzo surface. Let $C\sim_{\QQ} -K_X$ be a divisor on $X$ such that $(X,C)$ is log canonical. Suppose $(X,C)$ is a normal crossing pair when restricted to some open subset $U\subseteq X$. Suppose $E$ is a divisor over $X$ such that 
\begin{itemize}
\item $E$ is a log canonical place of $(X,C)$.
\item The center of $E$ on $X$ is a closed point $x\in U$.
\item $E$ is special over $X$.
\end{itemize}
Let $\mathcal{X}\to \AA^1$ be the special test configuration of $X$ induced by $E$ and $\mathcal{C}\subseteq \mathcal{X}$ be the closure of $C\times (\AA^1-\{0\})$. Let $(X',C')$ be the fiber of $(\mathcal{X},\mathcal{C})$ over $0\in \AA^1$. Let $\ord_{E'}$ be the canonical (up to scaling) divisorial valuation on $X'$ induced by the $\GG_m$ action on $X'$. Then the following statements hold.
\begin{itemize}
\item[(a)] $(X',C')$ is log canonical and $E'$ is a log canonical place of $(X',C')$. 
\item[(b)] There exists a homeomorphism between an open neighborhood $I\subseteq \mathcal{D}(X,C)$ containing $\ord_E$ and an open neighborhood $I'\subseteq \mathcal{D}(X',C')$ containing $\ord_{E'}$. Furthermore, every valuation $v\in I$ is special over $X$ and every valuation $v'\in I'$ is special over $X'$
\item [(c)] Let $v\in I$. Let $v'\in I'$ be the image of $v$ under the homeomorphism $I\simeq I'$. Then, the $\GG_m$-equivariant degeneration of $X$ induced by $v$ and the $\GG_m$-equivariant degeneration of $X'$ induced by $v'$ are isomorphic.
\end{itemize}
\end{theorem}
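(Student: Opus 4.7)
The overall plan is to use the special test configuration of $X$ induced by $E$ as a bridge between $(X,C)$ and $(X',C')$: first extend it to a test configuration of the pair, then identify neighborhoods of $\ord_E$ and $\ord_{E'}$ in the two dual complexes via a common equivariant log resolution, and finally match specialness and degenerations by a two-step Rees algebra argument.

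For part (a), since $E$ is a log canonical place of $(X,C)$ with $A_{X,C}(E)=0$ and $C\sim_{\mathbb{Q}}-K_X$, the Rees construction for $\ord_E$ applied to the anti-canonical ring of $X$ produces $\mathcal{X}$, and the same construction applied to sections cutting out $C$ gives $\mathcal{C}$, so $(\mathcal{X},\mathcal{C})\to\mathbb{A}^1$ is a $\mathbb{Q}$-Gorenstein test configuration of the pair with $K_{\mathcal{X}}+\mathcal{C}\sim_{\mathbb{Q},\mathbb{A}^1}0$. Log canonicity of $(\mathcal{X},\mathcal{C}+X')$, and hence of $(X',C')$ by adjunction, follows from the fact that the general fiber is log canonical and the central fiber is normal (being klt Fano). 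The divisorial valuation $\ord_{E'}$ arising from the canonical $\mathbb{G}_m$-action satisfies $A_{X',C'}(E')=A_{X,C}(E)=0$, so $E'$ is a log canonical place of $(X',C')$.

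For part (b), I would fix a log resolution $\pi:Y\to X$ extracting $E$ such that $\pi_*^{-1}C+E+\mathrm{Exc}(\pi)$ is simple normal crossing in an analytic neighborhood of $E$. A sufficiently small open neighborhood $I$ of $\ord_E$ in $\mathcal{D}(X,C)$ is then the star of $\ord_E$ in the SNC dual complex of $Y$: a finite union of simplices indexed by subsets of components of $\pi_*^{-1}C$ meeting $E$. The $\mathbb{G}_m$-action from the test configuration lifts to an equivariant degeneration of $Y$ to a log resolution $Y'$ of $(X',C')$, in which $E$ degenerates to a divisor realizing $\ord_{E'}$ and the adjacent components of $\pi_*^{-1}C$ degenerate to components of the strict transform of $C'$ meeting $E'$ in an SNC configuration of the same combinatorial type. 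This produces the required homeomorphism $I\cong I'$ as simplicial complexes, identifying $\ord_E$ with $\ord_{E'}$.

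For the remaining content of (b) and all of (c), any $v\in I$ can be written as $v=a\cdot\ord_E+b\cdot w$ for some $a,b\geq 0$ and a quasi-monomial valuation $w$ along components of $\pi_*^{-1}C$ adjacent to $E$; under the identification $I\cong I'$ it corresponds to $v'=a\cdot\ord_{E'}+b\cdot w'$. Since $E$ is special, there is a $\mathbb{G}_m$-equivariant isomorphism
\[
\gr_{\ord_E}^\bullet\bigoplus_m H^0(X,-mK_X)\;\cong\;\bigoplus_m H^0(X',-mK_{X'}),
\]
and applying $\gr_w^\bullet$ on the left and $\gr_{w'}^\bullet$ on the right yields identical multi-graded algebras. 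Consequently the associated graded anti-canonical ring for $v$ on $X$ is canonically isomorphic to that of $v'$ on $X'$, so finite generation and the klt Fano property transfer between the two sides and the induced $\mathbb{G}_m$-equivariant degenerations coincide. The main obstacle is to check that specialness persists throughout a full open neighborhood rather than at $\ord_E$ alone; by the identification in (b) this reduces to openness of specialness around $\ord_{E'}$ in $\mathcal{D}(X',C')$, which I would obtain by packaging the multi-graded Rees algebras for $v'\in I'$ into a single finitely generated multi-filtered algebra over $X'$, using the extra $\mathbb{G}_m$-action under which $\ord_{E'}$ is fixed, and then invoking openness of klt-ness in flat families to shrink $I'$ if necessary.
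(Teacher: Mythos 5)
Your overall architecture mirrors the paper's (degenerate $X$ to $X'$, match neighborhoods of $\ord_E$ and $\ord_{E'}$, compare associated graded rings), and your two-step Rees algebra idea is essentially the alternative route alluded to in the remark after Theorem~\ref{thm:isomorphism_of_graded_rings_for_special_degenerations} via \cite[Lemma 5.38]{Xu25}. However, several steps you assert are where the actual content lies, and one justification is wrong. In part (a), ``log canonicity of $(\mathcal{X},\mathcal{C}+\mathcal{X}_0)$ follows from the fact that the general fiber is log canonical and the central fiber is normal'' is not a valid implication: lc general fibers plus a normal central fiber do not control singularities along $\mathcal{X}_0$ (a family of nodal plane cubics degenerating to a cuspidal one already violates the analogous claim), and inversion of adjunction runs in the opposite direction. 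What is actually used is that $(\mathcal{X},\mathcal{C}+\mathcal{X}_0)$ and $(X\times\AA^1,\,C\times\AA^1+X\times\{0\})$ are both log Calabi--Yau over $\AA^1$ and agree over $\AA^1\setminus\{0\}$, hence are crepant birational by the negativity lemma; the hypothesis that $E$ is an lc place of $(X,C)$ enters precisely to give $\mathcal{X}_0$ log discrepancy zero. The paper packages this by exhibiting $(X',C')$ as crepant birational to the toric pair $(S_1,L_0+L_1+L_2)$.

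The central gap is in (b)--(c). Specialness of every $v'\in I'$ --- equivalently, finite generation of $\gr_{v'}^\bullet R'$ for the \emph{non-divisorial} quasi-monomial valuations near $\ord_{E'}$ --- cannot be obtained by ``packaging the multi-graded Rees algebras into a single finitely generated multi-filtered algebra'': finite generation of that object is exactly the statement to be proved, and it fails for general rank-two quasi-monomial valuations. The paper devotes Sections 4.2--4.3 to constructing a qdlt Fano type model $(\Tilde{S}_3, L_{i,\Tilde{S}_3}+E_{\Tilde{S}_3})\to X'$ (Theorem~\ref{thm:qdlt_Fano_type_model_on_special_degeneration}(f)), which via Theorem~\ref{thm:qdlt_fano_type_model_and_higher_rank_degeneration} is what delivers finite generation, the klt Fano property, and hence specialness throughout $I'$; your proposal has no substitute for this input. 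Two further assertions also need proof. First, the identity $\gr_{v}^\bullet R\cong\gr_{w'}^\bullet\gr_{\ord_E}^\bullet R$ is not formal: Lemma~\ref{lemma:v_t_and_bar_v_t_are_the_same_for_a_single_f} gives only $v_t(f)\le\overline{v_t}(\overline{f})$ in general, with equality on an interval \emph{depending on} $f$, so one needs a finite generating set (hence finite generation again) to extract a uniform interval; this is the content of Theorem~\ref{thm:isomorphism_of_graded_rings_for_special_degenerations}. Second, the claim that your log resolution $Y$ degenerates $\GG_m$-equivariantly to a log resolution of $(X',C')$ of the same combinatorial type near $E$ is the content of Lemma~\ref{lemma:MMP_isomorphism_away_from_L0}: one must show that the MMPs in Construction~\ref{construction: special_degeneration} are isomorphisms near the closure of $\{x\}\times(\AA^1\setminus\{0\})$, and the proof uses the specialness of $E$ (to choose a complement $\Delta'\ge\epsilon G$ with $G$ general ample) in an essential way; without it the degeneration could contract or modify branches of $C$ at $x$ and destroy the normal crossing structure at $x_3$, in which case no neighborhood $I'$ of the asserted form exists.
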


\begin{remark}
{\em
In fact, there is a cone structure in the dual complex of $(X',C')$ in a neighborhood of $\ord_{E'}$. Since $X'$ admits a faithful $\GG_m$ action, there exists a $\GG_m$-equivariant birational morphism $X'\dashrightarrow Z\times \GG_m$ for some proper variety $Z$ \cite[Exercise 2.5]{Xu25}. Then we can write any $\GG_m$-invariant valuation on $X'$ as $(w, \xi)\in \text{Val}_{Z} \times N_{\RR}(\GG_m)$, where $N(\GG_m) \cong \ZZ$ is the coweight lattice of $\GG_m$ \cite[Lemma 6.17]{Xu25}. Via this identification, $\ord_{E'}$ corresponds to $(0,1)\in \text{Val}_{Z} \times N_{\RR}(\GG_m)$. 

Suppose $v' = (w, \xi)\in \text{Val}_Z\times N_{\RR}(\GG_m)$. Then there is a line of valuations \[v_{t}':= ((1-t)w, \xi + (1-\xi)t), t\in [0,1]\] connecting $v'$ and $\ord_{E'}$.
For $v'\in \text{QM}(X',C')$ such that $[v']\in \mathcal{D}(X',C')$ is sufficiently close to $\ord_{E'}$, by \cite[Proposition 6.19]{Xu25}, the entire line $\{v_t':t\in [0,1]\}$ lies inside $\text{QM}(X',C')$. This implies that around $\ord_{E'}$, $\mathcal{D}(X',C')$ is a (real) cone with vertex $\ord_{E'}$. 
}
\end{remark}

\subsection*{Sketch of the proof}

The proof of Theorem~\ref{mainthm:classification_of_special_valuations_on_del_pezzo_surfaces} and Theorem~\ref{mainthm:structure_of_the_space_of_special_valuations} is based on Proposition~\ref{prop: special_divisor_dP_surface_criterion} and Proposition~\ref{prop: special_valuation_dP_surface_criterion}, which are explicit criterion of special valuations over a surface $X$. We can use Proposition~\ref{prop: special_divisor_dP_surface_criterion} to classify special divisorial valuations over $X$. However, for special quasi-monomial valuations, we need to find a linear combination of components of $C$ whose strict transforms on various weighted blow-ups of $X$ are all ample. Bigness and nefness of such divisors can be verified by computing intersection products. Our key observation is Lemma~\ref{lemma:big_and_nef_and_not_ample_equivalent_to_existence_of_unicuspidal_curves}, which says that the obstruction of ampleness of such divisors is precisely the existence of certain unicuspidal rational curves on $X$.

The proof of Theorem~\ref{mainthm:local_structure_of_dual_complex_of_special_degenerations} is based on Construction~\ref{construction: special_degeneration}, which involves a series of MMPs. By explicitly analyzing the MMP steps, we show that an intermediate step of this construction naturally leads to a qdlt Fano type model of $X'$ (see Theorem~\ref{thm:qdlt_Fano_type_model_on_special_degeneration}). This implies the finite generation of the graded ring associated to a quasi-monomial valuation $v'$ over $X'$. Then, by a purely algebraic argument (Theorem~\ref{thm:isomorphism_of_graded_rings_for_special_degenerations}), we can lift $v'$ to a quasi-monomial valuation $v$ over $X$ whose associated graded ring is isomorphic to that of $v'$.

\subsection*{Acknowledgements} The author would like to thank his advisor Chenyang Xu for suggesting this problem, answering questions, and providing many insights into related topics. The author would also thank Zhiyuan Chen for useful discussions and comments. The author is supported by NSF Grant DMS-2201349.

\section{Preliminaries}

We work over the complex numbers $\CC$. In this section, we introduce some notations and preliminary results
regarding weighted blow-ups, dual complexes, quasi-monomial valuations, test configurations, and special degenerations of Fano varieties.

\subsection{Weighted blow-ups at a point}
In this subsection, we recall the definition of weighted blow-ups at a smooth point. The general definitions can be found in \cite{AT23}.

\begin{definition}
{\em 
Let $X = \Spec A$ be an $n$-dimensional affine variety and $x\in X$ be a smooth closed point. Let $f_1,\ldots, f_n\in \mm_x$ generate $\mm_x$. Let $w = (w_1,\ldots,w_n)\in \ZZ_{>0}^n$. Define the graded algebra $I_\bullet = \bigoplus_{m\in \NN}I_m T^m\subseteq A[T]$ as the integral closure of the image of the homomorphism
\begin{align*}
A[x_1,\ldots, x_n]&\to A[T] \\
x_i &\mapsto f_iT^{w_i}
\end{align*}
The weighted blow-up of $X$ at $x$ with weight $w$ along $f_1,\ldots,f_n$ is defined as
\[
\pi: X' = \Proj_A I_\bullet \to  X.
\]
}
\end{definition}
\begin{remark}\label{remark: weighted_blow_up}
Consider the inclusion $\OO_{X'}(1)\subseteq \OO_{X'}$ induced by $I_{m + 1}\subseteq I_m$. Let $E$ be the divisor associated to this inclusion. Then $\OO_{X'}(-E) \cong \OO_{X'}(1)$ and $\pi_*\OO_{X'}(-mE) \cong I_m$ for every $m\in \NN$. Since $\OO_{X'}(1)$ is $\pi$-ample, there exists a sufficiently divisible $r$ such that $rE$ is Cartier and $\OO_{X'}(-rE)$ is $\pi$-very ample. Then
\[
X' \cong \Proj_X\bigoplus_{m\in \NN}\pi_*\OO_{X'}(-mrE) = \Proj_X\bigoplus_{m\in \NN}I_r^{m}.\]
In particular, $\pi: X'\to X$ is the blow-up of $X$ along the ideal $I_r$.
\end{remark}

Next, we define the weighted blow-up at a point along a normal crossing divisor. Let $X$ be a $n$-dimensional variety and $D$ a normal crossing divisor on $X$ such that 
the local equation of $D$ in an analytic neighborhood of a smooth point $x\in X$ is given by $f_1f_2\cdots f_n = 0$, where $f_i\in \hat{\OO}_{X,x}$. Let $w = (w_1,\ldots,w_n)\in \ZZ_{> 0}^n$. 

\begin{definition}
{\em
Let $X$ be an $n$-dimensional variety and $x\in X$ be a smooth closed point. Let $D$ be a normal crossing divisor on $X$ such that the local equation of $D$ in an analytic neighborhood of $x$ is given by $f_1f_2\cdots f_n = 0$, where $f_i\in \hat{\OO}_{X,x}$. Let $w = (w_1,\ldots,w_n)\in \ZZ_{> 0}^n$. 

After shrinking $X$ we may assume that $X = \Spec A$ for some integral domain $A$ and $f_1,\ldots,f_n\in \hat{A}$, where $\hat{A}$ is the completion of $A$ at $\mm = \mathfrak{m}_x$. Let $\hat{\mm} = \mm \hat{A}$. Since $A/\mathfrak{m}^r \to \hat{A}/\hat{\mathfrak{m}}^r$ is an Artinian ring isomorphism for all $r \geq 1$, there is a one-to-one correspondence
\begin{align*}
    \{\mathfrak{m}\text{-primary ideals of }A\} &\leftrightarrow \{\hat{\mathfrak{m}}\text{-primary ideals of }\hat{A}\}  \\
    I &\mapsto I\cdot \hat{A} \\
    \hat{I}\cap A &\leftarrow \hat{I}
\end{align*}
By Remark~\ref{remark: weighted_blow_up}, there exists a $\hat{\mm}$-primary ideal $\hat{I}$ such that 
\[
\hat{X}':=\Proj_{\hat{A}} \bigoplus_{m\in \NN} \hat{I}^m \to \Spec \hat{A}\]
defines the weighted blow-up of $\Spec \hat{A}$ at $\hat{\mm}$ with weight $w$ along $f_1,\ldots,f_n$. Let $I = \hat{I}\cap A$. 

We define the weighted blow-up of $X$ at $x$ with weight $w$ along $D$ as
\[
X' := \Proj_A \bigoplus_{m\in \NN} I^m \to \Spec A.\]
}

\end{definition}
The following lemma is immediate from the correspondence of $\mathfrak{m}$ and $\hat{\mathfrak{m}}$-primary ideals.
\begin{lemma}\label{lemma:weighted_blow_up_nc_divisor}
    There is a Cartesian diagram \[
\begin{tikzcd}
   \hat{X}'\arrow{r}\arrow{d}{\hat{\pi}} & X'\arrow{d}{\pi} \\
   \Spec \hat{A} \arrow{r} & \Spec A
\end{tikzcd}
\]
Furthermore, we also have the following:
\begin{itemize}
    \item [(a)] The exceptional locus of $\pi$ is isomorphic to the weighted projective space $\PP(w_1,\ldots,w_n)$,
    \item [(b)] The strict transform of $D$, when restricted to the exceptional divisor $\PP(w_1,\ldots,w_n)$, is the torus invariant divisor $(w_1w_2\cdots w_n = 0)$.
\end{itemize}
\end{lemma}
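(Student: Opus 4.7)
The plan is to reduce everything to the formal-local setting, where the weighted blow-up of $\AA^n$ along a union of coordinate hyperplanes is a toric construction and the conclusions are standard.

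First I would establish the Cartesian diagram. Since $A \to \hat{A}$ is faithfully flat, the bijective correspondence between $\mathfrak{m}$- and $\hat{\mathfrak{m}}$-primary ideals combined with flatness gives $I^m \cdot \hat{A} = \hat{I}^m$ for every $m$; taking $\Proj$ of the resulting isomorphism $\bigoplus_m (I^m \otimes_A \hat{A}) \cong \bigoplus_m \hat{I}^m$ of graded algebras yields $\hat{X}' \cong X' \times_X \Spec \hat{A}$, which is the claimed diagram.

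For (a) and (b) I would use this diagram to pass from $\pi$ to $\hat{\pi}$. Both exceptional loci lie over the closed point $x$ and $\Spec \hat{A} \to \Spec A$ is an isomorphism on the fiber over $x$, so the exceptional locus of $\pi$ is canonically identified with that of $\hat{\pi}$; the scheme-theoretic intersection of the strict transform of $D$ with the exceptional divisor is preserved as well, since strict transforms of Cartier divisors commute with flat base change. By the Cohen structure theorem, $\hat{A} \cong \CC[[x_1, \ldots, x_n]]$ with $x_i = f_i$, so the local equation of $D$ becomes $x_1 x_2 \cdots x_n = 0$ and $\hat{I}_m$ is the monomial ideal $(x^\alpha : \sum_i w_i \alpha_i \geq m)$. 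Thus $\hat{\pi}$ is the base change to $\Spec \hat{A}$ of the toric weighted blow-up of $\AA^n$ along the ray $\RR_{\geq 0} \cdot (w_1, \ldots, w_n)$.

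Finally, (a) and (b) reduce to the standard toric description of this star subdivision: the new ray $w = (w_1, \ldots, w_n)$ has associated torus-invariant divisor $\PP(w_1, \ldots, w_n)$, and each coordinate hyperplane $(x_i = 0)$ is the torus-invariant divisor associated to the ray spanned by $e_i$, so its strict transform meets the exceptional divisor along the toric hyperplane $(x_i = 0) \subseteq \PP(w_1, \ldots, w_n)$. The only point that requires care is the compatibility of strict transforms with the flat base change $\Spec \hat{A} \to \Spec A$, which is automatic from flatness; everything else is routine toric geometry of weighted blow-ups.
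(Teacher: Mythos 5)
Your proposal is correct and follows essentially the same route as the paper: both establish the Cartesian diagram from the primary-ideal correspondence (equivalently, flat base change of the Rees algebra) and then deduce (a) and (b) by identifying $\hat{\pi}$ with the standard toric weighted blow-up in the formal-local coordinates, using that the fiber over $x$ is unchanged by completion. Your write-up merely spells out the toric star-subdivision details that the paper leaves implicit.
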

\begin{proof}
This diagram is the base change of $\pi$ along $\Spec \hat{A}\to \Spec A$, by the correspondence between $\mm$-primary ideals and $\hat{\mm}$-primary ideals.\\

(a) Since $I$ is a $\mathfrak{m}$-primary ideal, $\pi$ is an isomorphism away from $x\in \Spec A$. Thus, the exceptional locus of $\pi$ is $\pi^{-1}(x) \cong \hat{\pi}^{-1}(x)\cong \PP(w_1,\ldots,w_n)$, where the second isomorphism follows from the fact that $\hat{\pi}$ is the weighted blow-up along an snc divisor.

(b) Note that $\hat{D}$, the pullback of $D$ on $\Spec \hat{A}$, is given by $(f_1f_2\cdots f_n = 0)$. Since $\hat{\pi}$ is the weighted blow-up along the snc divisor $(f_1f_2\cdots f_n = 0)$, the strict transform of $(f_i = 0)$, when restricted to the exceptional divisor, is the $i$th coordinate hyperplane in $\PP(w_1,\ldots,w_n)$.
\end{proof}

\subsection{Dual complexes}
We follow the definition of dual complexes in \cite{dFKX17}.

\begin{definition}
{\em
Let $(Y,D_Y)$ be a dlt pair. Write $\lfloor D_Y \rfloor = \sum_{i\in I} D_i$. Then the dual complex $\mathcal{D}(\lfloor D_Y\rfloor )$ is the simplicial complex obtained as follows. For each irreducible component $Z$ of $\bigcap_{j\in J} D_j$, we associate a $(|J|-1)$ dimensional simplex $v_Z$. Furthermore, if $W$ is the unique irreducible component of $\bigcap_{j\in J\setminus\{i\}} D_j$ containing $W$, then we attach $v_W$ to the corresponding face of $v_Z$.
}
\end{definition}

By \cite[Proposition 11]{dFKX17}, we may define the dual complex of a log canonical pair.
\begin{definition}
Let $(X,D)$ be a log canonical pair. Let $(Y,D_Y)$ be a dlt modification of $(X,D)$. Then we define $\mathcal{D}(X,D)$ to be the PL-homeomorphism type of $\mathcal{D}(\lfloor D_Y\rfloor)$.
\end{definition}

\subsection{Test configurations}

\begin{definition}
{\em
Let $X$ be an $n$-dimensional projective variety and $L$ be an ample $\QQ$-Cartier divisor on $X$. Let $r\in \mathbb{N}$ such that $rL$ is a very ample line bundle. A \textit{test configuration $(\mathcal{X}, \mathcal{L}_r)$ of $(X,L)$ with index $r$} is given by the following data:
\begin{itemize}
    \item $\mathcal{X}$ is a normal scheme with a $\GG_m$-action,
    \item $\pi: \mathcal{X}\to \AA^1$ is a $\GG_m$-equivariant flat morphism, where $\GG_m$ acts on $\AA^1$ by the multiplication $(t,a)\mapsto ta$, such that for $t\neq 0$, there is an isomorphism $\phi_t: \mathcal{X}_t\cong X$ where $\mathcal{X}_t$ is the fiber of $\mathcal{X}$ over $t\in \AA^1$.
    \item a $\GG_m$-linearized very ample line bundle $\mathcal{L}_r$ on $\mathcal{X}$ such that for $t\neq 0$, we have an isomorphism
    \[
    \mathcal{L}_r|_{\mathcal{X}_t} \cong \phi_t^* \OO_X(rL).
    \]
\end{itemize}

We say that $(\mathcal{X}, \mathcal{L}_r)$ is a trivial test configuration if $\mathcal{X}$ is the constant family $X\times\AA^1$ over $\AA^1$ with the trivial $\GG_m$ action and $\mathcal{L}_r$ is the pullback of $\OO_X(rL)$ along the projection $X\times \AA^1\to X$.
}
\end{definition}

\begin{remark}
{\em
For any test configuration $(\mathcal{X}, \mathcal{L}_r)$ (of index $r$), there is an isomorphism
\[
\phi: \mathcal{X} \times_{\AA^1} \GG_m \cong X\times \GG_m
\]
such that the restriction of $\phi$ on $\mathcal{X}_t$ is $\phi_t$ when $t\neq 0$. Furthermore, the restriction of $\mathcal{L}_r$ on $\mathcal{X}\times_{\AA^1}\GG_m$ is isomorphic to $\phi^*(\OO_X(rL)\boxtimes \OO_{\GG_m})$.
}
\end{remark}

\begin{definition}
{\em
Let $X$ be a klt Fano variety. Let $(\mathcal{X}, \mathcal{L}_r)$ be a test configuration of $(X,-K_X)$ with index $r$. Then $(\mathcal{X}, \mathcal{L}_r)$ is called weakly special (resp. special) if $(\mathcal{X}, \mathcal{X}_0)$ is log canonical (resp. plt) and $\frac{1}{r}\mathcal{L}_r\sim_{\QQ} -K_{\mathcal{X}}$.
}
\end{definition}
Suppose $\mathcal{X}$ is a non-trivial weakly special test configuration such that $\mathcal{X}_0$ is integral. Then $\ord_{\mathcal{X}_0}$ is a divisorial valuation on $K(\mathcal{X}) \cong K(X\times \AA^1) \cong K(X)(t)$. The restriction of $\ord_{\mathcal{X}_0}$ on $K(X)\subseteq K(X)(t)$ is a non-trivial divisorial valuation $c\cdot \ord_E$ on $K(X)$. The following Proposition says that one can reconstruct $\mathcal{X}$ from this divisorial valuation $c\cdot \ord_E$.

\begin{proposition}[\text{\cite[Lemma 4.18]{Xu25}}]\label{prop:rees_construction}
Let $X$ be a klt Fano variety. Let $(\mathcal{X}, \mathcal{L}_r)$ be a non-trivial weakly special test configuration of $(X,-K_X)$ with an integral central fiber $\mathcal{X}_0$. Then, the restriction of $\ord_{\mathcal{X}_0}$ on $K(X)$ is a non-trivial divisorial valuation $c\cdot \ord_E$ on $X$, for some divisor $E$ over $X$ and $c>0$. Then
\[\mathcal{X} \cong \Proj_{\AA^1} \left(\bigoplus_{m\in r\NN} \bigoplus_{s\in \ZZ}\mathcal{F}_E^{s} R_m t^{-s}\right),
\]
where $R_m = H^0(X, -mK_X)$ and $\mathcal{F}_E^\bullet R_m$ is the decreasing, multiplicative filtration on $R_m$ given by
\[
\mathcal{F}_E^s R_m = \{f\in R_m: c\cdot \ord_E(f)\geq s\}.
\]
Furthermore, the central fiber of $\mathcal{X}$ is
\[
\mathcal{X}_0 \cong \Proj\left(\bigoplus_{m\in r\NN} \gr_E^\bullet R_m\right), \text{ where }\gr_E^s R_m = \frac{\mathcal{F}_E^s R_m}{\mathcal{F}_E^{>s} R_m}.
\]
\end{proposition}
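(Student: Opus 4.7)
The plan is to exploit the $\GG_m$-equivariance of $(\mathcal{X}, \mathcal{L}_r)$ to decompose the relative section ring into weight spaces, and then identify each weight space with a piece of a filtration on the section ring of $X$ coming from the divisorial valuation $\ord_E$.

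First I would verify that the restriction of $\ord_{\mathcal{X}_0}$ to $K(X) \subseteq K(X)(t) = K(\mathcal{X})$ is a non-trivial divisorial valuation. Using the Abhyankar inequality, any non-trivial restriction of the rank-one valuation $\ord_{\mathcal{X}_0}$ to the subfield $K(X)$ has rational rank at most one, hence is divisorial. If the restriction were trivial, then $\ord_{\mathcal{X}_0}$ would agree with a positive multiple of $\ord_t$ on $K(\mathcal{X})$; combined with the $\GG_m$-equivariant isomorphism $\mathcal{X} \setminus \mathcal{X}_0 \cong X \times \GG_m$, one would deduce that $\mathcal{X}$ is the trivial test configuration (for instance by comparing the two normal $\GG_m$-equivariant compactifications of $X \times \GG_m$), contradicting the hypothesis. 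Thus $\ord_{\mathcal{X}_0}|_{K(X)} = c \cdot \ord_E$ for some divisor $E$ over $X$ and $c > 0$.

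Next I would decompose the section ring into $\GG_m$-weight spaces using the $\GG_m$-linearization of $\mathcal{L}_r$:
$$\bigoplus_{m\in r\NN} H^0(\mathcal{X}, m\mathcal{L}_r) = \bigoplus_{m,s} H^0(\mathcal{X}, m\mathcal{L}_r)_s.$$
The key identification is
$$H^0(\mathcal{X}, m\mathcal{L}_r)_s \cong \mathcal{F}_E^{s} R_m \cdot t^{-s}$$
(with the sign convention induced by the $\GG_m$-action on $\AA^1$). On the open part $\mathcal{X} \setminus \mathcal{X}_0 \cong X \times \GG_m$, every element $f\cdot t^{-s}$ with $f \in R_m$ is a rational section of $m\mathcal{L}_r$ of pure $\GG_m$-weight, and it extends to a regular global section on $\mathcal{X}$ if and only if $\ord_{\mathcal{X}_0}(f \cdot t^{-s}) \geq 0$. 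Since $\mathcal{X}_0$ is integral we have $\ord_{\mathcal{X}_0}(t) = 1$, so this condition rewrites as $c \cdot \ord_E(f) \geq s$, i.e., $f \in \mathcal{F}_E^{s} R_m$. Since $\mathcal{L}_r$ is $\pi$-very ample, taking relative $\Proj$ over $\AA^1$ of the resulting graded algebra recovers $\mathcal{X}$.

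Finally, to identify the central fiber I would reduce modulo $t$: multiplication by $t$ sends $\mathcal{F}_E^s R_m \cdot t^{-s}$ isomorphically into $\mathcal{F}_E^s R_m \cdot t^{-(s-1)} \subseteq \mathcal{F}_E^{s-1} R_m \cdot t^{-(s-1)}$, so the quotient of the Rees algebra by $t$ in bidegree $(m,s)$ is precisely $\mathcal{F}_E^s R_m / \mathcal{F}_E^{>s} R_m = \gr_E^s R_m$, yielding the stated formula for $\mathcal{X}_0$. I expect the main obstacle to be the first step — controlling the restriction of $\ord_{\mathcal{X}_0}$ and ruling out triviality — since the remaining steps are a formal consequence of the Rees construction and $\GG_m$-equivariance.
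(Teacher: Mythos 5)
This proposition is quoted in the paper from \cite[Lemma 4.18]{Xu25} and is not proved there, so there is no in-paper argument to compare against; your proposal reproduces the standard Rees-construction proof and is essentially correct. The weight-space decomposition, the extension criterion $\ord_{\mathcal{X}_0}(f\,t^{-s})\ge 0$ (valid by normality of $\mathcal{X}$ and the fact that $\mathcal{X}_0$ is the only divisor lying over $0$), the observation that integrality of the central fiber forces $\ord_{\mathcal{X}_0}(t)=1$, and the reduction mod $t$ identifying $\mathcal{X}_0$ with $\Proj$ of the associated graded ring are all the right steps. Two points deserve tightening. First, ``rational rank at most one, hence divisorial'' is too quick: rational rank one alone does not imply divisoriality (e.g.\ infinitely singular valuations have rational rank one). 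You need the full Abhyankar equality for the restriction $v_0=\ord_{\mathcal{X}_0}|_{K(X)}$, i.e.\ that $\operatorname{trdeg}(k_{v_0}/\CC)=\dim X-1$, which follows from additivity of the Abhyankar inequality in the tower $\CC\subseteq K(X)\subseteq K(X)(t)$ together with the equality for $\ord_{\mathcal{X}_0}$. Second, ruling out triviality by ``comparing the two normal $\GG_m$-equivariant compactifications'' is vague as stated: if $v_0$ is trivial, the birational map $\mathcal{X}\dashrightarrow X\times\AA^1$ is an isomorphism in codimension one (both have irreducible central fibers inducing the same valuation), and one must then invoke that both are the relative ample model over $\AA^1$ of the anticanonical divisor (using $\tfrac1r\mathcal{L}_r\sim_{\QQ}-K_{\mathcal{X}}$ from the weakly special hypothesis) to conclude they are isomorphic. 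Alternatively, note that your Rees description is proved for $v_0$ whether or not it is trivial, and triviality of $v_0$ would make the filtration trivial and hence $\mathcal{X}\cong X\times\AA^1$, contradicting non-triviality of the test configuration.
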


\begin{definition}
{\em 
Let $X$ be a klt Fano variety. A divisor $E$ over $X$ (or equivalently, a divisorial valuation $\ord_E$ over $X$) is called special (resp. weakly special) if there is a non-trivial special test configuration (resp. weakly special test configuration with integral fibers) $\mathcal{X}$ of $X$, such that the restriction of $\ord_{\mathcal{X}_0}$ on $K(X)$ is of the form $c\cdot \ord_E$ for some constant $c > 0$.

Equivalently, by Proposition~\ref{prop:rees_construction}, a divisor $E$ is special (resp. weakly special) if
\[
\mathcal{X} = \Proj_{\AA^1} \left(\bigoplus_{m\in r\NN} \bigoplus_{s\in \ZZ}\mathcal{F}_E^{s} R_m t^{-s}\right)
\]
defines a special test configuration (resp. weakly special configuration with integral fibers) of $(X,-K_X)$ for some choice of $\mathcal{L}_r$. In this case, we say that $\mathcal{X}$ is the special (resp. weakly special) test configuration induced by $E$.
}
\end{definition}

The following theorem is a criterion for weakly special divisors.
\begin{theorem}[\text{\cite{BLX22}}]\label{thm:weakly_special_divisor_criterion}
Let $X$ be a klt Fano variety and $E$ be a $\QQ$-Cartier integral divisor over $X$. Then $E$ is weakly special if and only if there exists a $\QQ$-complement $\Delta$ of $X$ such that $E$ is an lc place of $(X,\Delta)$. 
\end{theorem}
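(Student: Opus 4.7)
The plan is to handle the two directions separately, in both cases passing through the Rees construction of Proposition~\ref{prop:rees_construction}. For the forward implication, I would start from a non-trivial weakly special test configuration $(\mathcal{X},\mathcal{L}_r)$ induced by $E$ with integral central fiber $\mathcal{X}_0$. Adjunction for the lc pair $(\mathcal{X},\mathcal{X}_0)$, together with the ampleness of $\tfrac{1}{r}\mathcal{L}_r|_{\mathcal{X}_0}$, shows that $\mathcal{X}_0$ is a klt Fano variety. Birkar's theorem on the existence of $\QQ$-complements, applied $\GG_m$-equivariantly, then furnishes a $\GG_m$-invariant $\QQ$-complement $\Delta_0$ of $\mathcal{X}_0$. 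For $r$ sufficiently divisible, $-rK_{\mathcal{X}}$ is $\pi$-very ample, so a defining section of $r\Delta_0$ lifts by cohomology-and-base-change to a $\GG_m$-eigensection whose vanishing divisor $\mathcal{D}\subseteq\mathcal{X}$ is $\GG_m$-invariant. Inversion of adjunction then makes $(\mathcal{X},\mathcal{X}_0+\tfrac{1}{r}\mathcal{D})$ log canonical in a neighborhood of $\mathcal{X}_0$, and the trivialization $\mathcal{X}\setminus\mathcal{X}_0\cong X\times\GG_m$ identifies $\tfrac{1}{r}\mathcal{D}|_{\mathcal{X}\setminus\mathcal{X}_0}$ with $\Delta\times\GG_m$ for a $\QQ$-complement $\Delta$ of $X$. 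Restricting the vanishing $A_{(\mathcal{X},\mathcal{X}_0+\frac{1}{r}\mathcal{D})}(\mathcal{X}_0)=0$ along $K(X)\hookrightarrow K(\mathcal{X})$ then gives $A_{(X,\Delta)}(E)=0$, exhibiting $E$ as an lc place of $(X,\Delta)$.

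For the reverse implication, I would start with an lc place $E$ of some $\QQ$-complement $(X,\Delta)$ and build the test configuration as the Proj of the associated filtered algebra. First, extract $E$ via a dlt modification $\pi\colon (Y,\Delta_Y)\to (X,\Delta)$ with $E\subseteq\lfloor\Delta_Y\rfloor$, reducing everything to a Fano-type pair. The heart of the argument is to prove that the Rees algebra
\[
R\;:=\;\bigoplus_{m\in r\NN}\bigoplus_{s\in\ZZ}\mathcal{F}_E^s R_m\,t^{-s}
\]
is finitely generated over $\CC[t]$, which I would obtain by descending an adjoint section ring on $(Y,\Delta_Y)$ along $\pi$ and invoking the Birkar-Cascini-Hacon-McKernan package (running a $(K_Y+\Delta_Y)$-MMP with scaling to reach a good minimal model adapted to $E$). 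Once finite generation is in hand, $\mathcal{X}:=\Proj_{\AA^1}R$ is a $\GG_m$-equivariant flat family, and Proposition~\ref{prop:rees_construction} identifies it as the candidate test configuration induced by $E$. Integrality of $\mathcal{X}_0$ follows by matching $\gr_E^\bullet R_m$ with the section ring of an ample $\QQ$-divisor on the $E$-extraction, and log canonicity of $(\mathcal{X},\mathcal{X}_0)$ is transported from $(X,\Delta)$ to the total space by inversion of adjunction applied to $\mathcal{X}_0\subseteq\mathcal{X}$, using the lc-place condition $A_{(X,\Delta)}(E)=0$ as the key input.

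The main obstacle is the finite generation step in the reverse direction. The mere existence of an lc place does not immediately force finite generation of the associated filtered Rees algebra, and one really needs the full strength of BCHM applied to the Fano-type pair $(Y,\Delta_Y)$, together with the $\QQ$-triviality of $K_X+\Delta$, to produce a good minimal model from which finite generation can be read off. By comparison, every other step in the proof is a formal manipulation using adjunction, cohomology-and-base-change, and $\GG_m$-equivariant descent.
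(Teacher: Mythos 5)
First, note that the paper does not actually prove this theorem: it is quoted from \cite{BLX22}, and the text only records Construction~\ref{construction: special_degeneration} (the four-step $\GG_m$-equivariant MMP producing a weakly special test configuration from an lc place) as ``the key ingredient.'' Measured against that reference argument, your reverse direction is strategically on target: extracting $E$ on a model of $X\times\AA^1$, using the Fano-type/BCHM machinery to get finite generation of the filtered Rees algebra, and recovering $\mathcal{X}=\Proj_{\AA^1}R$ with $(\mathcal{X},\mathcal{X}_0)$ log canonical because the pair stays crepant to $(X\times\AA^1,\Delta\times\AA^1+X\times\{0\})$ throughout --- this is exactly what Construction~\ref{construction: special_degeneration} packages geometrically, and you correctly identify finite generation as the crux.

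There is, however, a genuine error in your forward direction: you assert that adjunction for the lc pair $(\mathcal{X},\mathcal{X}_0)$ shows $\mathcal{X}_0$ is a \emph{klt} Fano variety. For a \emph{weakly special} test configuration the total pair is only required to be log canonical, so adjunction gives only that the central fiber is (semi-)log canonical --- $\mathcal{X}_0$ need not even be normal --- and klt-ness of $\mathcal{X}_0$ is precisely the additional condition distinguishing \emph{special} test configurations. Consequently, your appeal to ``Birkar's theorem on the existence of $\QQ$-complements'' for a klt Fano is applied to an object that need not satisfy that hypothesis. The step is repairable rather than fatal: the existence of ($\GG_m$-invariant) complements holds for lc, indeed slc, Fano pairs, so one can still choose a complement of the central fiber, lift it via the relative ampleness of $-K_{\mathcal{X}}$ and vanishing, and run inversion of adjunction and restriction to a general fiber exactly as you describe. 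But as written the claim conflates weakly special with special, and you should replace ``klt'' by ``(semi-)log canonical'' and invoke the lc version of the complement theorem.
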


The key ingredient of the proof of Theorem~\ref{thm:weakly_special_divisor_criterion} is the construction of a weakly special test configuration $\mathcal{X}\to \AA^1$ induced by a log canonical place $E$.
\begin{construction}[\text{\cite[Theorem 4.22]{Xu25}}]\label{construction: special_degeneration}
{\em Suppose that $E$ is a weakly special divisor over a $\QQ$-factorial klt Fano variety $X$. Then we may construct a weakly special test configuration $\mathcal{X}\to \AA^1$ induced by $E$ in four steps, as illustrated by the following diagram:
\[
    X\times \AA^1 \overset{(1)}\leftarrow \mathcal{Y}  \overset{(2)}\dashrightarrow \mathcal{X}'\overset{(3)}\dashrightarrow \mathcal{X}^m \overset{(4)}\to \mathcal{X}.
\]
(1) Let $\mathcal{Y} \to X\times \AA^1$ extract the divisor $S_1$ corresponding to the valuation $(\ord_E, \ord_s)$ on $X\times \AA^1_s$. Then we may write the central fiber of $\mathcal{Y}$ as $X_1 \cup S_1$, where $X_1$ is the strict transform of $X\times \{0\} \subseteq X\times \AA^1$. Denote $\Delta_{\mathcal{Y}}$ be the strict transform of $\Delta\times \AA^1$.\\
(2) We run a $\GG_m$-equivariant MMP over $\AA^1$ on $\mathcal{Y}$ with the divisor $K_\mathcal{Y} + \Delta_{\mathcal{Y}} + X_1 + (1-\epsilon)S_1 \sim_{\QQ} -\epsilon S_1$ for a sufficiently small $\epsilon >0$. This gives us a model  $\mathcal{X}'\to \AA^1$ with an irreducible central fiber $S_2$, because the strict transform of $X_1$ is contracted in this MMP.\\
(3) We run a $\GG_m$-equivariant $-K_{\mathcal{X}'}$-MMP over $\AA^1$. More precisely, take $G\sim_{\QQ} -K_X$ a general ample divisor on $X$ and let $\mathcal{G}_{\mathcal{X}'}$ be the closure of $G\times (\AA^1-\{0\})$ on $\mathcal{X}'$. Then we run a MMP over $\AA^1$ with the divisor $K_{\mathcal{X}'} + \Delta_{\mathcal{X}'} +  \epsilon \mathcal{G}_{\mathcal{X}'}\sim_{\QQ, \AA^1} -\epsilon K_{\mathcal{X}'}$ for a sufficiently small $\epsilon > 0$. The result of this MMP is a relative minimal model $\mathcal{X}^{m} \to \AA^1$. \\
(4) We take the relative ample model of $-K_{\mathcal{X}^{m}}$ over $\AA^1$ to obtain $\mathcal{X}$.\\

We note that after restricting to the open subset $\AA^1-\{0\}\subseteq \AA^1$, each birational model $\mathcal{Y}, \mathcal{X}', \mathcal{X}^m, \mathcal{X}$ is isomorphic to the trivial family $X\times (\AA^1-\{0\})$. Modifications only happen in the central fibers.
}
\end{construction}

The following theorem is a criterion for  special divisors over $X$.
\begin{theorem}[\cite{LXZ22}]\label{thm:special_divisor_criterion}
    Let $X$ be a klt Fano variety and $E$ be a $\QQ$-Cartier integral divisor on $X$. Then the following are equivalent:
    \begin{itemize}
        \item [(1)] $E$ is special.
        \item [(2)] $E$ is weakly special and for any $\QQ$-Cartier $\QQ$-divisor $D$ on $X$, there exists $\epsilon > 0$ and an effective $\QQ$-divisor $D'\sim_{\QQ}-K_X - \epsilon D$, such that $(X, D' + \epsilon D)$ is log canonical with $E$ as an lc place.
        \item [(3)] $A_X(E) < T_X(E)$ and there exists a $\QQ$-complement $\Delta$ of $X$ such that $E$ is the only lc place of $(X,\Delta)$.
        \item [(4)] There exists a a divisor $D\sim_{\QQ}-K_X$ and $t\in (0,1)$ such that $(X, tD)$ is lc and $E$ is the only lc place of $(X, tD)$.
        \item [(5)] There exists a projective birational morphism $\mu: Y\to X$ and an effective $\QQ$-divisor $D_Y$ on $Y$ such that $(Y, E+D_Y)$ is plt and $-(K_Y + E + D_Y)$ is ample.
    \end{itemize}
\end{theorem}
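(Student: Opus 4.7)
The plan is to establish a cyclic chain of implications, using the almost-tautological equivalence (3) $\Leftrightarrow$ (4) as the hub. The order I would follow is (4) $\Rightarrow$ (3) $\Rightarrow$ (2) $\Rightarrow$ (1) $\Rightarrow$ (5) $\Rightarrow$ (4).

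The equivalence (3) $\Leftrightarrow$ (4) is essentially formal: from (4), setting $\Delta = tD$ gives a complement with $E$ as its only lc place, and $A_X(E) = t \cdot \ord_E(D) < \ord_E(D) \le T_X(E)$, where the last inequality holds because $D \sim_{\QQ} -K_X$ is an effective $\QQ$-anticanonical divisor; conversely, (3) rescales to (4) with $t = A_X(E)/T_X(E) < 1$. The implication (3) $\Rightarrow$ (2) follows by perturbing the complement $\Delta$ of (3) with a small multiple of any given $D$, using that the lc condition with a unique lc place along $E$ is open in the coefficients.

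For (2) $\Rightarrow$ (1), I would apply Construction~\ref{construction: special_degeneration} to produce a weakly special test configuration $\mathcal{X} \to \AA^1$ with $\ord_{\mathcal{X}_0}|_{K(X)}$ proportional to $\ord_E$; the additional freedom in (2) to perturb by any $D$ is then used to guarantee that the four-step MMP in the construction outputs a plt pair $(\mathcal{X}, \mathcal{X}_0)$ rather than merely an lc one, so that the resulting test configuration is special. For (1) $\Rightarrow$ (5), starting from a special test configuration, I would extract a $\GG_m$-equivariant birational model of $X$ on which $E$ appears and run a relative MMP: ampleness of $-K_{\mathcal{X}_0}$ and plt-ness of $\mathcal{X}_0$ then translate, via the Rees correspondence of Proposition~\ref{prop:rees_construction} and adjunction along $E$, into ampleness of $-(K_Y + E + D_Y)$ and plt-ness of $(Y, E + D_Y)$. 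Finally, (5) $\Rightarrow$ (4) follows by choosing a general $H \in |-n(K_Y + E + D_Y)|$ for $n \gg 0$, pushing forward $\tfrac{1}{n} H + D_Y$ along $\mu$, and applying a Bertini-type argument on the restriction to $E$ to ensure that $E$ remains the unique lc place on $X$.

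The main obstacle I expect is (2) $\Rightarrow$ (1): one must exploit the ``for all $D$'' quantifier in (2) to arrange the MMP in Construction~\ref{construction: special_degeneration} to produce a plt central fiber, not merely an lc one. Concretely, the perturbation guaranteed by (2) must be compatible with each intermediate model $\mathcal{Y}, \mathcal{X}', \mathcal{X}^m$ so that no extra lc places appear along the steps of the MMP; this is the point where the full strength of (2), rather than just the weaker weakly-special criterion of Theorem~\ref{thm:weakly_special_divisor_criterion}, is essential.
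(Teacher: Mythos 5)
This theorem is imported from \cite{LXZ22}; the paper states it without proof, so there is no in-paper argument to compare yours against. Measured against the known proof, your cyclic architecture is the standard one, and you correctly identify (2) $\Rightarrow$ (1) as the step where the ``for all $D$'' quantifier is needed to force Construction~\ref{construction: special_degeneration} to output a plt, rather than merely lc, central fiber.

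However, several sub-arguments have concrete gaps, all traceable to the same omission: you never actually use the inequality $A_X(E) < T_X(E)$. First, the parenthetical claim that (3) ``rescales'' to (4) with $t = A_X(E)/T_X(E)$ is false as stated: replacing $\Delta$ by $t\Delta$ gives $A_{(X,t\Delta)}(E) = (1-t)A_X(E) > 0$, so $E$ ceases to be an lc place. The correct move is to perturb $\Delta$ toward an effective $D_0\sim_{\QQ}-K_X$ with $\ord_E(D_0) > A_X(E)$ --- which exists precisely because $A_X(E) < T_X(E)$ --- and take the log canonical threshold. This is not load-bearing for your cycle, but the same error recurs where it is: in (5) $\Rightarrow$ (4), pushing forward $\tfrac1n H + D_Y$ with $H$ general yields $D\sim_{\QQ}-K_X$ with $\ord_E(D) = A_X(E)$ exactly, i.e.\ the boundary case $t=1$; for any $t<1$ one then has $A_{(X,tD)}(E) = (1-t)A_X(E) > 0$ and $E$ is not an lc place. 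To get $t<1$ you must first use the ampleness of $-(K_Y+E+D_Y)$ to take $H''\sim_{\QQ}-(K_Y+E+D_Y)-\delta E$ general for small $\delta>0$, so the pushforward has $\ord_E = A_X(E)+\delta$; equivalently, you must first establish $T_X(E) > A_X(E)$, which no Bertini argument on general members (all of which have $\ord_E = 0$) can supply. Second, in (3) $\Rightarrow$ (2) the divisor $D$ is an arbitrary $\QQ$-Cartier $\QQ$-divisor, possibly non-effective and with $\ord_E(D)$ of either sign; ``openness in the coefficients'' does not by itself produce an effective $D'\sim_{\QQ}-K_X-\epsilon D$ with $\ord_E(D') = A_X(E) - \epsilon\,\ord_E(D)$, and when $\ord_E(D) < 0$ the required value exceeds $A_X(E)$, so $A_X(E) < T_X(E)$ is again indispensable. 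Finally, (1) $\Rightarrow$ (5) is too thin to assess: the model $Y$ lives over $X$ while the plt pair $(\mathcal{X},\mathcal{X}_0)$ lives over $\AA^1$, and ``adjunction along $E$'' does not directly transport one into the other; the standard route first lifts a $\GG_m$-equivariant complement from the klt central fiber to obtain (3), and only then builds the model $Y$ by an MMP.
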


\subsection{Quasi-monomial valuations}
\begin{definition}
{\em
Let $(Y,E)$ be a dlt pair. Suppose $E_1,\ldots,E_r$ are irreducible components of $\lfloor E\rfloor$ such that $\bigcap_{i=1}^r E_i\neq \varnothing$. Let $C$ be a component of $\bigcap_{i=1}^r E_i$ and $\eta$ be the generic point of $C$. Around $\eta$, $E_i$ is given by an equation $y_i\in \OO_{Y,\eta}$. Let $\alpha = (\alpha_1,\ldots,\alpha_r) \in \RR_{\geq 0}^r$. For each $f\in \OO_{Y,\eta}$, we write
\[
f = \sum_{\beta\in \mathbb{N}^r}c_{\beta} y^\beta, \text{ where }y^\beta = \prod_{i=1}^r y_i^{\beta_i}.
\]
We define a valuation $v_\alpha$ on $\OO_{Y,\eta}$ by setting
\[
v_{\alpha}(f) = \min_{\beta\in \mathbb{N}^r} \left\{\sum_{i=1}^r \alpha_i\beta_i: c_\beta \neq 0\right\}.
\]
Then, $v_{\alpha}$ naturally extends to a valuation on $Y$. We define
\[
\text{QM}_\eta(Y, E) = \{v_\alpha: \alpha\in \mathbb{R}_{\geq 0}^r\}
\]
and
\[
\text{QM}(Y,E) = \bigcup_{\eta'} \text{QM}_{\eta'}(Y,E)
\]
where $\eta'$ runs through the generic point of every stratum of $\lfloor E\rfloor$.
}
\end{definition}

\begin{remark}\label{remark:dual_complex_qm_vals}
{\em
There is a natural $\RR_{>0}$ action on $\text{QM}(Y,E)$ which rescales any valuation by a positive constant.
Then, the dual complex $\mathcal{D}(Y,E)$ can be identified with the quotient of $\text{QM}(Y,E)\setminus \{0\}$ by the $\RR_{>0}$-action on $\text{QM}(Y,E)$.
}
\end{remark}

\begin{definition}
{\em
    Let $X$ be a variety. We say that $v$ is a quasi-monomial valuation over $X$ if there exists a dlt model $ (Y,E)\to X$, such that $v\in \text{QM}(Y,E).$

    Let $(X,D)$ be a log canonical pair. We say that $v\in \text{QM}(X,D)$ if there exists a dlt modification $ (Y,E)\to (X,D)$ such that $v\in \text{QM}(Y,E)$.
    }
\end{definition}

\begin{definition}
{\em
    Let $X$ be a normal projective variety. We say that a projective birational morphism $\mu: (Y, E)\to X$ is a qdlt Fano type model if there exists an effective $\QQ$-divisor $D$ on $Y$ such that $(Y, E+D)$ is qdlt (see \cite[Definition 35]{dFKX17} for the precise definition), $\lfloor E + D \rfloor = E$, and $-(K_Y+E+D)$ is ample.

    We say that a quasi-monomial valuation $v$ over $X$ is special, if there exists a qdlt Fano type model $(Y,E)\to X$ such that $v\in \text{QM}(Y,E)$.
    }
\end{definition} 

We also need the following theorems about the finite generation of the graded ring associated to a quasi-monomial valuation on a qdlt Fano type model.
\begin{theorem}[\text{\cite{XZ25}},\text{\cite[Section 5.2]{Xu25}}]\label{thm:qdlt_fano_type_model_and_higher_rank_degeneration}
Let $(Y,E)\to X$ be a qdlt Fano type model of a klt Fano variety $X$. Suppose $E = E_1 + \cdots + E_p$ is a reduced divisor and $\bigcap_{i=1}^p E_i \neq \varnothing$. Let $\eta$ be the generic point of an irreducible component of $\bigcap_{i=1}^p E_i$. Let $v\in \text{QM}(Y,E)$ be a quasi-monomial valuation whose center on $Y$ contains $\eta$. Denote $R_m = H^0(X, -mK_X)$ for $m\in r\NN$, where $r$ is the Cartier index of $K_X$. Then the following statements hold:
\begin{itemize}
\item $\displaystyle{\bigoplus_{m\in r\NN}\gr_v^\bullet R_m}$ is finitely generated.
\item $\displaystyle{\Proj\left(\bigoplus_{m\in r\NN}\gr_v^\bullet R_m\right)}$ is a klt Fano variety.
\item If the center of $v$ is $\eta$, then $\displaystyle{\bigoplus_{m\in r\NN}\gr_v^\bullet R_m}$ is isomorphic to a fixed graded ring.
\end{itemize}
\end{theorem}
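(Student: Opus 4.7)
The approach is to realize the quasi-monomial degeneration induced by $v$ as an iterated sequence of divisorial degenerations, verifying at each stage that we remain inside the class of qdlt Fano type models so that Theorem~\ref{thm:special_divisor_criterion} and Proposition~\ref{prop:rees_construction} apply. Fix an effective $\QQ$-divisor $D$ on $Y$ with $(Y,E+D)$ qdlt, $\lfloor E+D\rfloor=E$, and $-(K_Y+E+D)$ ample. I would induct on $p$. For $p=1$, $v$ is a positive multiple of $\ord_{E_1}$, and since $(Y,E_1+D)\to X$ is qdlt Fano type, criterion (5) of Theorem~\ref{thm:special_divisor_criterion} identifies $E_1$ as a special divisor; the three conclusions then follow directly from Proposition~\ref{prop:rees_construction}, with the ``fixed'' graded ring being the Rees algebra of the filtration $\mathcal{F}_{E_1}^\bullet R_\bullet$.

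\textbf{Inductive step for rational weights.} For $\alpha=(\alpha_1,\ldots,\alpha_p)\in\QQ_{>0}^p$, I would first apply Construction~\ref{construction: special_degeneration} to degenerate $X$ along $\ord_{E_1}$, obtaining a special test configuration with central fiber a klt Fano $X^{(1)}$. Running a parallel $\GG_m$-equivariant relative MMP on the product family $(Y\times\AA^1,(E+D)\times\AA^1)$ degenerates $(Y,E+D)\to X$ to a pair $(Y^{(1)},E^{(1)}+D^{(1)})\to X^{(1)}$, which one verifies is again a qdlt Fano type model in which the strict transforms of $E_2,\ldots,E_p$ survive and meet along a stratum lying over the image of $\eta$. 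The residual weight vector $(\alpha_2,\ldots,\alpha_p)$ determines a quasi-monomial valuation on this new qdlt Fano type model, so applying the inductive hypothesis to $X^{(1)}$ yields the finite generation of $\bigoplus_m\gr_v^\bullet R_m$ and the klt Fano property of its Proj.

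\textbf{Irrational weights and the fixed graded ring.} For general $\alpha\in\RR_{>0}^p$, I would pass to the limit by polyhedrality: finite generation at a single rational point in the relative interior of the simplex, combined with the flat family structure coming from the iterated Rees algebras, forces the multi-indexed Newton--Okounkov body of $\bigoplus_m R_m$ with respect to $(\ord_{E_1},\ldots,\ord_{E_p})$ to be a rational polytope. Once this polytope is rational, the associated graded ring $\bigoplus_m\gr_v^\bullet R_m$ for any $\alpha$ in the open simplex is recovered from the same polyhedral data and is therefore finitely generated, with klt Fano Proj (preserved by the flat limit). This also gives the third bullet for free: the underlying ring structure of $\bigoplus_m\gr_v^\bullet R_m$ depends only on the divisorial data $E_1,\ldots,E_p$, while $\alpha$ enters purely through the linear functional defining the $\ZZ$-grading. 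Hence any two quasi-monomial valuations whose center on $Y$ is exactly $\eta$ produce isomorphic graded rings.

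\textbf{Main obstacle.} The principal difficulty is propagating the qdlt Fano type structure through the inductive step: one must check that after the MMP of Construction~\ref{construction: special_degeneration} the divisor $-(K_Y+E+D)$ specializes to an ample divisor on the central fiber, and that no new log canonical place is created which would obstruct $E_2,\ldots,E_p$ from forming a qdlt boundary on $Y^{(1)}$. This is where the full technical apparatus of \cite{XZ25} and \cite[Section 5.2]{Xu25} is needed, in particular a relative cone theorem for qdlt pairs over $\AA^1$ and an ampleness propagation statement under special degenerations. A secondary subtlety is ensuring that the limit filtration in the irrational case is multiplicative, so that the associated graded coincides with the one read off from the rational polyhedral data rather than being a coarser object.
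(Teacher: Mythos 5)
This theorem is quoted in the paper from \cite{XZ25} and \cite[Section 5.2]{Xu25} without proof, so there is no in-paper argument to compare against; I will compare your sketch with the argument in those references. The standard proof there is quite different from yours: one identifies the filtration steps $\mathcal{F}^{\vec a}R_m$ directly with section spaces $H^0\bigl(Y, -mf^*K_X - \textstyle\sum_i a_i E_i\bigr)$ on the qdlt Fano type model $Y$, deduces finite generation of the full $\NN^{p+1}$-graded Rees algebra from BCHM-type finite generation on the Fano type variety $Y$, and then reads off all three bullets (for arbitrary, possibly irrational, weights) from that single multigraded object; the klt Fano property of the Proj comes from adjunction along the qdlt strata. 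Your route via iterated divisorial test configurations is a genuinely different decomposition, but as written it has two gaps that are not merely technical.

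First, the inductive step is the entire content of the theorem in disguise. Showing that after degenerating along $\ord_{E_1}$ the strict transforms of $E_2,\dots,E_p$ again form a qdlt Fano type boundary on a model of the central fiber $X^{(1)}$, with ampleness of $-(K+E+D)$ surviving the MMP, is precisely the hard propagation statement; the present paper devotes all of Section 4 (Theorem~\ref{thm:qdlt_Fano_type_model_on_special_degeneration}) to proving a version of it just for surfaces and a single exceptional divisor, and it is already delicate there. Deferring this to ``the full technical apparatus of \cite{XZ25}'' means the induction does not close.

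Second, and more seriously, the passage from rational to irrational weights is not valid as stated. Finite generation of $\gr_{v_\alpha}^\bullet R$ at a rational $\alpha$ (or even at all rational $\alpha$) does not force the multigraded Newton--Okounkov data to be rational polyhedral, nor does it imply finite generation at irrational $\alpha$; this failure is exactly the phenomenon that makes the higher rank finite generation theorem of \cite{LXZ22} nontrivial. What one actually needs is finite generation of the simultaneous $\NN^{p}$-indexed Rees algebra $\bigoplus_{m,\vec a}\mathcal{F}^{\vec a}R_m$, which the iterated construction does not obviously produce (the iterated associated graded computes $\gr_{E_p}\cdots\gr_{E_1}R$, and identifying this with the multigraded object already requires the compatibility coming from the common qdlt model). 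Once that multigraded finite generation is in hand, your observation that $\alpha$ enters only through the linear functional defining the $\ZZ$-grading does give the third bullet; but that input has to be established first, and it is supplied by the direct BCHM argument rather than by your limit procedure.
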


\begin{theorem}[\text{\cite[Theorem 5.19]{Xu25}}]\label{thm:rational_space_containing_special_valuation_is_special}
Let $X$ be a klt Fano variety. Denote $$\displaystyle{R = \bigoplus_{m\in r\NN} H^0(X,-mK_X)}$$ be the section ring of the anti-canonical divisor. Suppose $v\in \text{QM}_\eta(Y,E)$ for some dlt model $(Y,E)\to X$ such that $\gr_v^\bullet R$ is finitely generated. Then, there exists an open neighborhood $U$ of $v$ in the minimal rational vector space containing $v$, such that for all $w\in U$,
\[
\gr_w^\bullet R\cong \gr_v^\bullet R.
\]
\end{theorem}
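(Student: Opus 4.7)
My plan is to express both $\gr_v^\bullet R$ and $\gr_w^\bullet R$ as quotients of a common polynomial ring by their respective initial ideals, and then show that the initial ideal is locally constant along $V$, the minimal rational subspace containing $v$. Since $\gr_v^\bullet R$ is finitely generated, I can pick homogeneous elements $f_1,\ldots,f_N\in R$ (with $f_i\in R_{m_i}$) whose initial forms $\mathrm{in}_v(f_i)\in\gr_v^\bullet R$ generate it as a $\kk$-algebra; a standard \emph{Khovanskii subduction} argument then shows the $f_i$ themselves generate $R$. Setting $A=\kk[x_1,\ldots,x_N]$ with $R$-degree grading $\deg x_i=m_i$ and $v$-weight $\mathrm{wt}_v(x_i)=v(f_i)$, and letting $I$ be the kernel of the surjection $A\twoheadrightarrow R$, $x_i\mapsto f_i$, the Khovanskii-basis property yields an isomorphism of bigraded $\kk$-algebras $\gr_v^\bullet R\cong A/\mathrm{in}_v(I)$.

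The key technical step is the containment $\mathrm{in}_v(I)\subseteq \mathrm{in}_w(I)$ for every $w$ in a small enough neighborhood of $v$ in $V$. Write $v=v_\alpha$ with $\alpha=(\alpha_1,\ldots,\alpha_r)$. For $w=w_\gamma$ close to $v$ in $\RR^r$, the weight $w(f_i)$ is computed by a fixed dominant multi-index $\beta^{(i)}\in\NN^r$ in the local expansion of $f_i$, so $v(f_i)=\sum_j\beta^{(i)}_j\alpha_j$ and $w(f_i)=\sum_j\beta^{(i)}_j\gamma_j$. Pick generators $P_1,\ldots,P_M$ of $\mathrm{in}_v(I)$ (finite by Noetherianity) and lifts $\tilde P_k=P_k+Q_k\in I$ where every monomial of $P_k$ has common $v$-weight $s_k$ and every monomial of $Q_k$ has strictly greater $v$-weight. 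Two observations combine: (i) every equality between monomial $v$-weights inside $P_k$ unrolls into an integer linear equation $\sum d_j\alpha_j=0$, and by the very definition of $V$ as the common zero locus of all rational linear forms vanishing at $\alpha$, such equations extend to every $w\in V$, so all monomials of $P_k$ retain a common $w$-weight; (ii) the finitely many strict inequalities comparing monomials of $Q_k$ to those of $P_k$ are open conditions on $\RR^r$, preserved in a Euclidean neighborhood of $v$. Intersecting the conditions produces a neighborhood $U\subseteq V$ of $v$ on which $\mathrm{in}_w(\tilde P_k)=P_k$ for every $k$, and hence $\mathrm{in}_v(I)\subseteq\mathrm{in}_w(I)$.

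To upgrade this containment to an isomorphism of associated graded rings, I use a Hilbert-function comparison. For any $w\in U$ the filtered surjection $A\twoheadrightarrow R$ induces a $\kk$-algebra map $A\to\gr_w^\bullet R$ sending $x_i\mapsto\mathrm{in}_w(f_i)$, which factors through an injection $A/\mathrm{in}_w(I)\hookrightarrow\gr_w^\bullet R$. Combined with the containment $\mathrm{in}_v(I)\subseteq\mathrm{in}_w(I)$ established above, one obtains a chain of $R$-degree-graded maps
\[
\gr_v^\bullet R\;\cong\;A/\mathrm{in}_v(I)\;\twoheadrightarrow\;A/\mathrm{in}_w(I)\;\hookrightarrow\;\gr_w^\bullet R.
\]
In each $R$-degree $m$, the outer two algebras have $\kk$-dimension exactly $\dim_\kk R_m$ (for $\gr_v^\bullet R$ because $f_i$ is a $v$-Khovanskii basis, and for $\gr_w^\bullet R$ by the elementary fact that the total dimension of a vector-space filtration of $R_m$ equals $\dim_\kk R_m$), forcing the middle term to have the same dimension, so both maps are isomorphisms and $\gr_v^\bullet R\cong\gr_w^\bullet R$. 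The main obstacle is observation (i) above: the argument crucially depends on $V$ being cut out by \emph{rational} linear equations, which is precisely why an arbitrary real perturbation of $v$ need not preserve $\gr_v^\bullet R$---the rational span is exactly the locus on which the integer identities controlling the initial ideal remain in force.
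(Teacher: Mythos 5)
First, note that the paper does not prove this statement; it is imported verbatim from \cite[Theorem 5.19]{Xu25}, so your proposal is being judged on its own merits. Your overall strategy --- present $\gr_v^\bullet R$ as $A/\mathrm{in}_v(I)$ via a Khovanskii basis, and show the initial ideal is locally constant on the rational envelope --- is the standard and correct one, and your steps 1 and 2 are essentially right: the subduction argument, the presentation $\gr_v^\bullet R\cong A/\mathrm{in}_v(I)$, and the key observation that the weight equalities defining $\mathrm{in}_v(\tilde P_k)$ are integral linear relations in $\alpha$ (hence persist on $V$) while the strict inequalities are open, giving $\mathrm{in}_v(I)\subseteq\mathrm{in}_w(I)$ for $w$ near $v$ in $V$.

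The gap is in the last step, and it hides the main content of the theorem. The map $A\to\gr_w^\bullet R$ does kill $\mathrm{in}_w(I)$, but the induced map $A/\mathrm{in}_w(I)\to\gr_w^\bullet R$ is \emph{not} automatically injective: its kernel is $J_w/\mathrm{in}_w(I)$ where $J_w=\ker(A\to\gr_w^\bullet R)$, and the equality $J_w=\mathrm{in}_w(I)$ is exactly the Khovanskii-basis property for $w$ --- equivalently, the statement that the surjection $A\twoheadrightarrow R$ is \emph{strict} for the $w$-filtrations, i.e.\ that the $\mathrm{in}_w(f_i)$ still generate $\gr_w^\bullet R$. (Indeed, since $\dim_\kk(A/\mathrm{in}_w(I))_m=\dim_\kk(A/I)_m=\dim_\kk R_m=\dim_\kk(\gr_w^\bullet R)_m$ by Macaulay's theorem on initial ideals, injectivity and surjectivity of this map are equivalent, and both fail for general valuations $w$ --- e.g.\ whenever $\gr_w^\bullet R$ is not finitely generated.) So asserting the injection is tantamount to assuming the conclusion. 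Separately, even granting the injection, your dimension count is invalid: from $B\twoheadrightarrow C\hookrightarrow D$ with $\dim B=\dim D$ one only gets $\dim C\le\dim B$, not equality; this part is repairable by invoking the Macaulay identity $\dim_\kk(A/\mathrm{in}_w(I))_m=\dim_\kk R_m$ just quoted, which makes the surjection $A/\mathrm{in}_v(I)\twoheadrightarrow A/\mathrm{in}_w(I)$ an isomorphism. What is genuinely missing is a second application of your rationality-plus-openness mechanism, this time to the \emph{witnesses} of the subduction at $v$ (for $g\in R_m$ with $g=P(f)$ and $\mathrm{wt}_v(P)=v(g)$, one must show $\mathrm{wt}_w(P)=w(g)$ for $w\in V$ near $v$), together with an argument making the resulting neighborhood uniform in $m$ --- this last point is where the finite generation must be used again (e.g.\ via the two-step degeneration $\gr_{v+\epsilon\xi}^\bullet R\cong\gr_\xi^\bullet(\gr_v^\bullet R)$ for rational directions $\xi$ tangent to $V$, which is how the cited source proceeds).
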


\subsection{Unicuspidal curves}
\begin{definition}
{\em
Let $X$ be a smooth projective surface and $C\subseteq X$ be a nodal curve with a node $x$. Let $x_1, x_2$ denote the analytic coordinate near $x$ such that the local equation of $C$ is given by $x_1x_2 = 0$. We say that an integral curve $D$ on $X$ is a $(p,q)$-unicuspidal curve well-formed with respect to $(C,x)$ if 
\begin{itemize}
\item $D$ has a unique singular point $x$, which is a $(p,q)$-cusp,
\item the Newton polygon of the local equation of $D$ in the $x_1,x_2$ coordinate is the region 
\[
\left\{(x,y)\in \RR^2_{\geq 0} :\frac{x}{p} + \frac{y}{q} \geq 1\right\}.\]
\end{itemize}
}
\end{definition}

\begin{proposition}
Suppose $X$ is a smooth del Pezzo surface and $C$ is an anti-canonical integral divisor on $X$ with nodal singularities. Let $x\in C$ be a node and fix an order of the two (analytic) branches of $C$ at $x$. For $t>0$, let $v_t\in \text{QM}(X,C)$ denote the quasi-monomial valuation centered at $x$ with weight $(1,t)$ along the two branches of $C$. Let $p,q$ be coprime positive integers such that $pq\vol(-K_X) > (p+q)^2$. Then the following are equivalent:
    \begin{itemize}
        \item [(a)] There exists a line bundle $L$ such that $p+q = (-K_X\cdot L)$ and $pq = (L^2) + 1$.
        \item [(b)] There exists a $(p,q)$-unicuspidal rational curve $D$ well-formed with respect to $C$ such that $D\cap C = \{x\}$.
        \item [(c)] Suppose $f: Y\to X$ is the $(p,q)$-weighted blow-up along the two branches of $C$. Then $C_Y:= f_*^{-1}C$ is big and nef but not ample. 
    \end{itemize}
\end{proposition}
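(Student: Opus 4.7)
The plan is to prove the equivalences via the cycle $(b) \Rightarrow (a) \Rightarrow (c) \Rightarrow (b)$ by intersection calculations on $Y$. First, I would record the basic data on the weighted blow-up: from $E^2 = -1/(pq)$, $K_Y = f^*K_X + (p+q-1)E$, and $f^*C = C_Y + (p+q)E$, one obtains $C_Y \cdot E = (p+q)/(pq)$ and $C_Y^2 = \vol(-K_X) - (p+q)^2/(pq)$. The hypothesis $pq\,\vol(-K_X) > (p+q)^2$ is thus exactly $C_Y^2 > 0$, which feeds every later bigness statement.

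The implication $(b) \Rightarrow (a)$ is the most direct: the Newton-polygon assumption with $D\cap C = \{x\}$ gives $i_x(C,D) = p+q$ by reading off the intersections with the two branches of $C$, hence $-K_X\cdot D = p+q$; the adjunction formula applied to a rational curve with a single $(p,q)$-cusp (for which $p_a = (p-1)(q-1)/2$) then yields $D^2 = pq-1$, so $L := \mathcal{O}_X(D)$ satisfies (a).

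For $(a) \Rightarrow (c)$, consider the class $L_Y := f^*L - pqE$ on $Y$; a direct computation gives $L_Y^2 = -1$, $L_Y\cdot K_Y = -1$, and $L_Y\cdot C_Y = 0$, so Riemann--Roch gives $\chi(Y,L_Y) = 1$. Combining with Serre duality and non-pseudo-effectivity of $K_Y - L_Y$ produces an effective $D_Y \in |L_Y|$, and $D_Y\cdot C_Y = 0$ shows $C_Y$ is not ample. For nefness, any irreducible $Z$ with $C_Y\cdot Z < 0$ would have to be a component of $D_Y$; the constraints $L_Y^2 = L_Y\cdot K_Y = -1$ force $D_Y$ itself to be a single irreducible $(-1)$-type curve, ruling out such $Z$. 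Bigness of $C_Y$ then follows from nefness together with $C_Y^2 > 0$.

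For $(c) \Rightarrow (b)$, Kleiman's criterion yields an irreducible curve $D_Y$ with $C_Y\cdot D_Y = 0$; the cases $D_Y = E$ and $D_Y \cap E = \varnothing$ are ruled out by $C_Y\cdot E > 0$ and ampleness of $C$ respectively, so $D := f_*D_Y$ passes through $x$ with $f^*D = D_Y + mE$. Integrality of $C\cdot D = (p+q)m/(pq)$ together with $\gcd(p,q)=1$ forces $pq \mid m$; Hodge index gives $D_Y^2 < 0$, and combining this with adjunction on $D_Y$ and the surface cone-theorem bound $-K_Y\cdot D_Y \leq 3$ pins down $m = pq$. This yields $-K_X\cdot D = p+q$, $D^2 = pq-1$, $D_Y$ smooth rational, and $D_Y\cdot E = 1$; hence $D$ is rational with a unique singularity at $x$ whose $\delta$-invariant is $(p-1)(q-1)/2$, and the combination of $\ord_E(D) = pq$ with $i_x(C,D) = p+q$ forces the local Newton polygon to be $\{x/p+y/q\geq 1\}$. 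The principal obstacle is this last step: fixing $m = pq$ requires the simultaneous use of Hodge index, adjunction, and the Mori-theoretic bound, and then recovering the exact Newton polygon (rather than merely some cusp with the correct numerics) requires a careful combinatorial argument combining the $\ord_E$ and branch-intersection data. A related technical subtlety in $(a) \Rightarrow (c)$ is ruling out reducible representatives of $L_Y$, which relies on a lattice-theoretic argument based on $L_Y^2 = -1$.
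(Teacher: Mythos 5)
Your cycle $(b)\Rightarrow(a)\Rightarrow(c)\Rightarrow(b)$ is viable but genuinely different from the paper's $(a)\Rightarrow(b)\Rightarrow(c)\Rightarrow(a)$. The paper's hard step is $(a)\Rightarrow(b)$: it counts $h^0(L)$ on $X$ via Kawamata--Viehweg and Riemann--Roch, compares with the colength of the valuation ideal $\mathfrak{a}_{pq}(\ord_E)$ computed by Pick's theorem to produce a member of $|L|$ with $\ord_E \geq pq$, and then shows integrality and well-formedness by local intersection bookkeeping; its $(c)\Rightarrow(a)$ then only needs to recover the numerical data. You instead go straight from $(a)$ to $(c)$ by Riemann--Roch for $L_Y=f^*L-pqE$ on $Y$ (getting $\chi=1$, $h^2=0$, hence an effective $D_Y$ with $D_Y\cdot C_Y=0$), and you defer the entire construction of the unicuspidal curve to $(c)\Rightarrow(b)$, which forces you to merge the paper's $(c)\Rightarrow(a)$ computation with the Newton-polygon argument. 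Both routes work; yours trades the Pick's theorem count on $X$ for a Riemann--Roch computation on the singular surface $Y$, and concentrates all the geometric work in one implication rather than two.

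Three points need repair or justification. First, $Y$ has quotient singularities at the two points of $E\cap C_Y$, so "Riemann--Roch gives $\chi(Y,L_Y)=1$" is not automatic: you must observe that $pqE$ (hence $L_Y$) is Cartier, so that the Riemann--Roch formula without orbifold correction terms applies (e.g.\ by pulling back to a resolution and using rationality of the singularities); similarly Serre duality for $h^2$ should be phrased for the Cartier divisor $L_Y$. Second, your nefness argument in $(a)\Rightarrow(c)$ is garbled: an irreducible $Z$ with $C_Y\cdot Z<0$ must be a component of $C_Y$, not of $D_Y$; since $C$ is integral, $C_Y$ is irreducible and $C_Y^2>0$ by the hypothesis $pq\vol(-K_X)>(p+q)^2$, so nefness is immediate and no analysis of $|L_Y|$ is needed there. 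Third, in $(c)\Rightarrow(b)$ the bound $-K_Y\cdot D_Y\leq 3$ from the cone theorem is neither applicable ($D_Y$ need not span an extremal ray) nor needed: as in the paper, $\ord_E(D)=kpq$ from integrality of $C\cdot D$, adjunction $(K_Y+D_Y)\cdot D_Y\geq -2$ rewritten as $-k+D_Y^2\geq -2$, negativity of $D_Y^2$ from the Hodge index theorem, and integrality of $D_Y^2=D^2-k^2pq$ together pin down $k=1$ and $D_Y^2=-1$. With these corrections the proposal is sound; in particular your final Newton-polygon step (using $\ord_E(D)=pq$ and $(D\cdot C)_x\leq D\cdot C=p+q$ to force the polygon to be exactly $\{x/p+y/q\geq 1\}$, and $D\cap C=\{x\}$ from equality of global and local intersection numbers) is exactly the paper's argument.
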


\begin{proof}
We first show that (a) implies (b). By Kawamata-Viehweg vanishing and Riemann-Roch,
\[
h^0(L) = \chi(L) = \chi(\OO_X) + \frac{1}{2}(L\cdot L-K_X) =  1 + \frac{1}{2}(L^2) + \frac{1}{2}(-K_X\cdot L).
\]
Thus, condition (ii) in (a) implies that
\[
h^0(L) = \frac{1}{2}(pq+p+q+1).
\]

Let $v= \ord_E$, where $E = \text{Ex}(f)$ the exceptional divisor. By Pick's theorem,
\[
\text{colength}(\mathfrak{a}_{pq}(v)) = \frac{1}{2}(pq+p+q-1) < h^0(L).
\]
Thus, there is a divisor $D\in |L|$ with $v(D) \geq pq$.

Suppose that $D$ is not integral, then there exists $0\leq D'< D$ such that 
\[
\frac{v(D')}{\deg(D')} \geq \frac{v(D)}{\deg (D)} \implies v(D')\geq \frac{(-K_X\cdot D')}{(-K_X\cdot D)}pq = (-K_X\cdot D') \frac{pq}{p+q},
\]
where the last equality is given by (i).
If $D' \neq C$, We compute the local intersection numbers to get the following inequality:
\[
(-K_X\cdot D') \geq (-K_X\cdot D')_x \geq v(D')(\frac{1}{p}+\frac{1}{q})\geq \frac{(-K_X\cdot D')}{(-K_X\cdot D)} (p+q) = (-K_X\cdot D').
\]
Thus, all the above equalities must hold. In particular,
\[
(-K_X\cdot D') \frac{pq}{p+q} = v(D')
\]
must be an integer. However, since $\gcd(p,q) = 1$, $(-K_X\cdot D')$ divides $p+q$ and hence must be at least $p+q$. This contradicts with $D' < D$.

If $D' = C$, then
\[
\frac{v(C)}{\deg C} \geq \frac{v(D)}{\deg D} \implies \frac{p+q}{\vol(-K_X)} \geq \frac{pq}{(-K_X\cdot L)} \implies (p+q)^2 \geq pq\vol(-K_X),
\]
which contradicts with (iii). 

Since $D$ is integral, the newton polygon of $D$ in $x_1,x_2$ coordinates passes through $(x_D,0)$ and $(0,y_D)$ for some $x_D, y_D>0$. Then $x_D\geq q$ and $y_D\geq p$. Again, by computing local intersection numbers, we get
\[
p+q = (-K_X\cdot D) \geq (-K_X\cdot D)_x = x_D + y_D
\]
Therefore, $x_D = q$, $y_D = p$ and hence $v(D) = pq$. Thus, the newton polygon of $D$ cannot have any vertices below the line connecting $(q,0)$ and $(0,p)$. The equality $(C\cdot D) = (C\cdot D)_x$ also shows that $C\cap D = \{x\}$.

Since $D$ already has a $(p,q)$-cusp,
\[
2g(D) - 2 \leq (K_X+D)\cdot D - (p-1)(q-1) = -2
\]
Thus, $g(D) = 0$ and $D$ does not have any other singularities.

Next, we show that (b) implies (c). Let $D$ be the $(p,q)$-unicuspidal rational curve in (a). Then $D_Y:= f_*^{-1}(D)$ is a smooth rational curve on $Y$. Furthermore, if $u_0$ and $u_1$ are the coordinates on $E = \text{Ex}(f) \cong \PP^1$, then $D_Y\cap E = (u_0^q - u_1^p = 0)$ and $C_Y\cap E = (u_0u_1 = 0)$. As a result, $D_Y\cap C_Y = \varnothing$ because they do not intersect outside of $E$. Thus, $C_Y$ is not ample. Furthermore, we may compute
\[
(C_Y^2) = (C^2) - (p+q)^2(E^2) = \vol(-K_X) - \frac{(p+q)^2}{pq} > 0.
\]
Hence, $C_Y$ is big and nef.

Finally, we prove that (c) implies (a). Suppose that $C_Y$ is big and nef but not ample, then there exists an irreducible curve $D_Y$ on $Y$ which does not intersect $C_Y$. 

We claim that $(D_Y^2) < 0$. If $(D_Y^2) \geq 0$, then $D_Y$ is nef and hence semiample. Since $D_Y$ is not numerically trivial, the Iitaka dimension of $D_Y$ is positive. Thus, $|mD_Y|$ contains a nontrivial pencil of divisors which dominate $Y$. However, since $(C_Y\cdot D_Y) = 0$, $C_Y$ cannot intersect any element of $|mD_Y|$. This contradicts with the bigness of $C_Y$.

Let $D = f(D_Y)$. We can compute  
\[
(C\cdot D) = (C_Y\cdot D_Y) + \frac{p+q}{pq}\ord_{E}(D) = \frac{p+q}{pq}\ord_{E}(D).
\]
Since $\gcd(p,q) = 1$, $\ord_{E}(D) = kpq$ for some integer $k\geq 1$. Furthermore,
\begin{align*}
    -2 &\leq (K_{Y} + D_Y)\cdot D_Y = (f^*K_{X} + (p+q-1)E + D_Y)\cdot D_Y \\
    &= (-C_Y - E + D_Y)\cdot D_Y \\
    &=  -E\cdot (f^*D - \ord_{E}(D)E) + (D_Y^2) \\
    &= -k + (D_Y^2)
\end{align*}
Thus, $k = 1$ and $(D_Y^2) \geq -1$. Since $(D_Y^2) = (D^2) - k^2pq$ is an integer, $(D_Y^2) = -1$. This gives 
\[
(C\cdot D) = p+q, \quad (D^2) = pq-1,
\]
as desired.
\end{proof}

\section{Special Valuations over Log Calabi-Yau Surface Pairs}
Let $X$ be a smooth del Pezzo surface and $C\sim -K_X$ be an effective divisor on $X$ such that $(X,C)$ is log canonical.
The goal of this section is to answer the following question:
\begin{question}
Let $v\in \text{QM}(X,C)$ be a quasi-monomial valuation over $X$. When is $v$ special over $X$? 
\end{question}

By adjunction, $C$ is one of the following:
\begin{itemize}
\item[(i)] $C$ is an irreducible smooth curve.
\item[(ii)] $C$ is an irreducible nodal rational curve with a unique node.
\item[(iii)] $C$ consists of a circle of irreducible smooth rational curves.
\end{itemize}

In case (i), $\text{QM}(X,C) = \{\ord_C\}$ and $\ord_C$ is not a special valuation by Theorem~\ref{thm:special_divisor_criterion}. For the rest of this section, we focus on case (ii) and (iii).

\subsection{Numerical criterion for special valuations}
\begin{proposition}\label{prop: special_divisor_dP_surface_criterion}
Let $X$ be a smooth Fano surface and $C\sim -K_X$ be a nodal curve on $X$ such that $(X,C)$ is log canonical. Let $E$ be an log canonical place of $(X,C)$. Let $f: Y\to X$ be the projective birational morphism extracting the divisor $E$. If $f$ is the identity map (i.e., $E$ is a divisor contained in $X$), let $C_Y = C - E$. Otherwise, let $C_Y$ be the strict transform of $C$ on $Y$. Then the following statements are equivalent:
\begin{itemize}
\item[(a)] $E$ is a special divisor over $X$.
\item[(b)] There exists an effective $\QQ$-divisor $B$ on $Y$ such that $\supp(B)= \supp(C_Y)$ and $B\cdot F > 0$ for any component $F$ of $\supp(C_Y)$.
\end{itemize}
\end{proposition}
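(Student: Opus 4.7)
My strategy applies the equivalent characterization (5) of Theorem~\ref{thm:special_divisor_criterion}, which says $E$ is special over $X$ iff there is an effective $\QQ$-divisor $D_Y$ on a birational model extracting $E$ such that $(Y, E+D_Y)$ is plt and $-(K_Y+E+D_Y)$ is ample. The starting identity is
\[
K_Y + E + C_Y \;\sim_{\QQ}\; f^*(K_X+C) \;\sim_{\QQ}\; 0,
\]
valid because $E$ is an lc place of $(X, C)$ and $C \sim -K_X$, so in particular $-(K_Y+E+D_Y) \sim_{\QQ} C_Y - D_Y$ for any $\QQ$-divisor $D_Y$ on $Y$.

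For (a) $\Rightarrow$ (b), I would take the divisor $D_Y$ provided by Theorem~\ref{thm:special_divisor_criterion}(5) and decompose it as $D_Y = \sum_i d_i F_i + \sum_j e_j G_j$, where the $F_i$ are the components of $C_Y$ and the $G_j$ are irreducible components outside $\supp(C_Y)$. Plt-ness forces each $d_i < 1$, so $B := \sum_i (1 - d_i) F_i$ is effective with $\supp(B) = \supp(C_Y)$. The ampleness of $C_Y - D_Y$ then yields
\[
0 \;<\; (C_Y - D_Y) \cdot F_k \;=\; B \cdot F_k - \sum_j e_j (G_j \cdot F_k) \;\leq\; B \cdot F_k
\]
for each component $F_k$ of $C_Y$, using that $G_j \neq F_k$ forces $G_j \cdot F_k \geq 0$.

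For (b) $\Rightarrow$ (a), given $B$ as in (b) I would set $D_Y := C_Y - \epsilon B$ for sufficiently small $\epsilon > 0$. Effectivity and the coefficient bound $<1$ are immediate, and $(Y, E + D_Y)$ is plt because $(Y, E+C_Y)$ is lc by crepant pullback of $(X, C)$ and $E$ is the unique coefficient-$1$ component of $E+D_Y$. The computation
\[
-(K_Y + E + D_Y) \;\sim_{\QQ}\; \epsilon B
\]
produces a divisor that is big (from $B^2 = \sum_i b_i (B\cdot F_i) > 0$) and nef (from $B \cdot F \geq 0$ for every irreducible curve $F$, with strict positivity on components of $C_Y$). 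If $\epsilon B$ is already ample, Theorem~\ref{thm:special_divisor_criterion}(5) concludes directly. Otherwise, the main step is to upgrade bigness and nefness to ampleness: I would apply the Kawamata--Shokurov base point free theorem to the klt pair $(Y, D_Y)$ with test divisor $\epsilon B$ (noting that $a\epsilon B - (K_Y + D_Y) = (a+1)\epsilon B + E$ is nef and big for large $a$, since $B \cdot E > 0$ as some component of $C_Y$ meets $E$), obtaining semiampleness of $\epsilon B$. The induced contraction $\psi: Y \to Z$ collapses exactly the curves with $B\cdot F = 0$, none of which are $E$ or a component of $C_Y$; pushforward gives a plt pair $(Z, \psi_* E + \psi_* D_Y)$ with ample $-(K_Z + \psi_*E + \psi_*D_Y)$. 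A small-coefficient $\QQ$-complement of this pair on $Z$ — e.g., obtained from a general section of a large multiple of the ample anti-log-canonical — then descends through a common resolution of $Y$ and $Z$ to a $\QQ$-complement of $(X, 0)$ with $E$ as its unique lc place, and Theorem~\ref{thm:special_divisor_criterion}(4) delivers (a).

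The principal obstacle is the ampleness upgrade in (b) $\Rightarrow$ (a): the null locus of $\epsilon B$ on $Y$ can be nontrivial, since curves on $Y$ disjoint from $\supp(C_Y)$ may exist. I handle this by contracting the null locus via the base point free theorem and then descending the resulting ample complement from the contracted model $Z$ back to $X$; the positivity hypothesis in (b) is exactly what guarantees that this contraction preserves $E$ and every component of $C_Y$.
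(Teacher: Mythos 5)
Your proof of (a) $\Rightarrow$ (b) is essentially the paper's: decompose $D_Y$ into the part $D_1$ supported on $C_Y$ and the rest $D_2$, set $B = C_Y - D_1$, and use $B \sim_{\QQ} -(K_Y+E+D_Y) + D_2$ together with $D_2\cdot F_k \ge 0$. (The assertion that plt-ness alone forces $d_i<1$ is slightly imprecise — a coefficient-one component of $C_Y$ disjoint from $E$ is not excluded by plt-ness — but this is the same normalization $\lfloor D_Y + E\rfloor = E$ that the paper also takes from the criterion, and is harmless after a standard perturbation of $D_Y$ in its $\QQ$-linear equivalence class.)

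The direction (b) $\Rightarrow$ (a) begins as in the paper ($D_Y = C_Y - \epsilon B$, so $-(K_Y+E+D_Y)\sim_{\QQ}\epsilon B$ is big and nef and, by base point freeness, semiample), but your endgame has a genuine gap. First, the contraction $\psi\colon Y\to Z$ of the null locus of $B$ destroys the morphism to $X$: the contracted curves need not be $f$-exceptional (e.g.\ a $(-1)$-curve meeting $E$ but disjoint from $C_Y$), so $Z$ need not dominate $X$ and criterion (5) cannot be applied on $Z$. Second, the complement you then descend to $X$ satisfies $\ord_E(f^*\Delta) = A_X(E)$ exactly, so it does \emph{not} verify criterion (4): that criterion requires $(X,tD)$ with $t<1$ to have $E$ as an lc place, which forces $\ord_E(D) = A_X(E)/t > A_X(E)$; scaling your $\Delta$ down just makes the pair klt. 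The criterion your construction actually feeds is (3), whose extra hypothesis $A_X(E) < T_X(E)$ you never check. That hypothesis does in fact follow from your setup — bigness of $\epsilon B$ lets you write $f^*(-K_X) \sim_{\QQ} A_X(E)E + D_Y + \epsilon B$ with $\epsilon B \sim_{\QQ} \delta' E + (\text{effective})$ for some $\delta'>0$, whence $T_X(E) \ge A_X(E)+\delta' > A_X(E)$ — so the gap is fillable, but as written the proof does not close. The paper avoids all of this by staying on $Y$: since $B$ is big, nef and semiample, there is an effective $F$ supported on the null locus with $B\cdot F = 0$, $E\not\subseteq\supp F$, and $B-\epsilon F$ ample; then $D = C_Y - B + \epsilon F$ makes $(Y, E+D)$ plt with $-(K_Y+E+D)\sim_{\QQ} B-\epsilon F$ ample, and criterion (5) applies directly with the morphism $f\colon Y\to X$.
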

\begin{proof}
We first show that (a) implies (b). Since $E$ is a special divisor over $X$, by Theorem~\ref{thm:special_divisor_criterion}, there exists an effective $\QQ$-divisor $D$ such that $\lfloor D+E\rfloor = E$, $(Y, D+E)$ is plt, and $A:= -(K_Y + D + E)$ is ample. Since
\[
0\sim f^*(K_X+C) = K_Y + E + C_Y,\]
we have $C_Y \sim_{\QQ} A + D$. Write $D = D_1 + D_2$ where $\supp(D_1)\subseteq \supp(C_Y)$ and no component of $D_2$ is contained in $\supp(C_Y)$. Define $B = C_Y - D_1\sim_{\QQ} A+ D_2$. Then $B$ is effective and $\supp(B) = C_Y$. Furthermore, for any irreducible component $F$ of $\supp(C_Y)$,
\[
F\cdot B = F\cdot A + F\cdot D_2 > 0.\]

Next, we prove that (b) implies (a). Let $B$ the the effective $\QQ$-divisor on $Y$ satisfying (b). After rescaling we may assume that $\lfloor B\rfloor =0$. Since $B^2 > 0$, $B$ is nef and hence semiample. Thus, there exists an effective $\QQ$-divisor $F$ such that $B\cdot F = 0$ and $B-\epsilon F$ is ample for any sufficiently small $\epsilon > 0$. In particular, $E\not\subseteq\supp(F)$. Define $D = C_Y - B + \epsilon F\geq 0$. Then for sufficiently small $\epsilon > 0$, $\lfloor D+E\rfloor = E$, $(Y, D+E)$ is plt, and $-(K_Y + D + E) \sim_{\QQ} B-\epsilon F$ is ample. By Theorem~\ref{thm:special_divisor_criterion}, $E$ is a special divisor over $X$.
\end{proof}

\begin{proposition}\label{prop: special_valuation_dP_surface_criterion}
Let $X$ be a smooth Fano surface and $C\sim -K_X$ be a nodal curve on $X$ such that $(X,C)$ is log canonical. Let $x\in C$ be a node. For $t>0$, let $v_t$ denote the quasi-monomial valuation centered at $x$ with weight $(1,t)$ along the two branches of $C$. For $t\in \QQ$, let $E_t$ denote the divisor over $X$ corresponding to the divisorial valuation $v_t$, and let $Y_t\to X$ be the birational morphism extracting the divisor $E_t$. Write $C = C_1 + \cdots + C_k$ where $C_i$ is irreducible for $1\leq i\leq k$. Let $C_{i,t}$ be the strict transform of $C_i$ on $Y_t$.

Suppose that there exists an open interval $I$ and $a_1,\ldots, a_k>0$ such that $\sum_i a_i C_{i,t}$ is ample for all $t\in I\cap \QQ$. Then for all $t\in I$, $v_t$ is a special valuation over $X$ and the special degenerations of $X$ induced by all such $v_t$'s are isomorphic to each other.
\end{proposition}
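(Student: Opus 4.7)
The strategy is to extend the divisor argument of Proposition~\ref{prop: special_divisor_dP_surface_criterion} to the two-dimensional cone of quasi-monomial valuations $\{v_s\}_{s>0}$ at the node $x$, by producing, for each subinterval $(t_1,t_2)\subseteq I$ with rational endpoints, a single qdlt Fano type model whose QM space captures all $v_s$ for $s\in(t_1,t_2)$.

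First, for every $t\in I\cap\QQ$, I would apply Proposition~\ref{prop: special_divisor_dP_surface_criterion} directly with $B=\sum_i a_iC_{i,t}$. Ampleness of $B$ on $Y_t$ yields $B\cdot F>0$ for every irreducible component $F$ of $\supp(C_{Y_t})$, while $\supp(B)=\supp(C_{Y_t})$ because all $a_i>0$. Hence $E_t$ is a special divisor and $v_t=\ord_{E_t}$ induces, via Construction~\ref{construction: special_degeneration}, a special $\GG_m$-equivariant degeneration of $X$ with central fiber $\Proj(\gr_{v_t}^{\bullet}R)$, where $R=\bigoplus_{m\in r\NN}H^0(X,-mK_X)$.

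Next, fix $t_1<t_2$ in $I\cap\QQ$ and let $f:Y\to X$ be the toric common model extracting exactly $E_{t_1}$ and $E_{t_2}$, so that $E_{t_1}\cap E_{t_2}=\{\eta\}$ is a single point which is at worst a cyclic quotient singularity. Because both $E_{t_i}$ are lc places of $(X,C)$ we have
\[
K_Y+\widetilde{C}+E_{t_1}+E_{t_2}=f^{*}(K_X+C)\sim 0,
\]
where $\widetilde{C}$ is the strict transform of $C$, and $(Y,E_{t_1}+E_{t_2})$ is qdlt at $\eta$ because analytically it is a cyclic quotient of an snc pair. To produce the Fano type boundary I would mirror the semiampleness argument from the proof of Proposition~\ref{prop: special_divisor_dP_surface_criterion}: set $B_Y=\sum_i a_i\widetilde{C}_{i,Y}$, compare $B_Y$ with the pullbacks $\pi_j^{*}B_{t_j}$ for $\pi_j:Y\to Y_{t_j}$, and use the negativity of exceptional self-intersections to verify that $B_Y$ is nef, big, with $B_Y\cdot F>0$ for every component $F$ of $\widetilde{C}$ and $B_Y\cdot E_{t_j}>0$ for $j=1,2$. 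Then semiampleness provides $F'\geq 0$ in the null locus of $B_Y$, automatically disjoint from $E_{t_1}\cup E_{t_2}$, and the choice
\[
D=\widetilde{C}-B_Y+\epsilon F'
\]
for small $\epsilon>0$ yields $D\geq 0$, $\lfloor E_{t_1}+E_{t_2}+D\rfloor=E_{t_1}+E_{t_2}$, the pair $(Y,E_{t_1}+E_{t_2}+D)$ qdlt, and $-(K_Y+E_{t_1}+E_{t_2}+D)=B_Y-\epsilon F'$ ample. Carrying out this intersection-theoretic verification on the toric common model (including checking the positivity of $B_Y$ against both the components of $\widetilde{C}$ and the two exceptional divisors) is the main technical obstacle.

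Finally, with $(Y,E_{t_1}+E_{t_2})$ realized as a qdlt Fano type model, every $v_s$ for $s\in(t_1,t_2)$ lies in $\text{QM}_\eta(Y,E_{t_1}+E_{t_2})$ with center exactly $\eta$, since its weight vector lies strictly inside the cone spanned by the rays of $v_{t_1},v_{t_2}$ in the local toric description at $\eta$. Theorem~\ref{thm:qdlt_fano_type_model_and_higher_rank_degeneration}(c) then gives finite generation of $\gr_{v_s}^{\bullet}R$ and its isomorphism to a fixed graded ring independent of $s\in(t_1,t_2)$, so $v_s$ is special and the induced degenerations of $X$ are pairwise isomorphic on $(t_1,t_2)$. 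Letting $t_1,t_2$ range over $I\cap\QQ$, the open intervals $(t_1,t_2)$ cover $I$ up to the rational endpoints; those endpoints are matched to neighboring irrationals by a limit/specialization argument on the Rees construction (or, equivalently, by applying Theorem~\ref{thm:rational_space_containing_special_valuation_is_special} in a larger qdlt Fano type model incorporating the adjacent valuations), and chaining these isomorphisms yields the conclusion for all $t\in I$.
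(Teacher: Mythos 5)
Your overall strategy coincides with the paper's: pass to the common model $Y$ extracting $E_{t_1}$ and $E_{t_2}$ for rational $t_1<t_2$, exhibit $(Y,E_{t_1}+E_{t_2})\to X$ as a qdlt Fano type model, and invoke Theorem~\ref{thm:qdlt_fano_type_model_and_higher_rank_degeneration}. However, there is a genuine gap at the crucial step. You assert that the null locus of $B_Y=\sum_i a_i C_{i,Y}$ is ``automatically disjoint from $E_{t_1}\cup E_{t_2}$'' because $B_Y\cdot E_{t_j}>0$. That positivity only shows $E_{t_1},E_{t_2}$ are not themselves components of the null locus; it does not prevent an irreducible curve $D_Y$ with $B_Y\cdot D_Y=0$ from passing through the point $y=E_{t_1}\cap E_{t_2}$. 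If $F'$ contains $y$, then $\lfloor\text{boundary}\rfloor$ acquires an extra component through the zero-dimensional stratum of $E_{t_1}+E_{t_2}$ and the pair fails to be qdlt, so the whole construction collapses. The paper devotes an entire step to exactly this point: assuming $y\in D_Y$, it performs a sequence of blow-ups at $y$ whose exceptionals are all of the form $E_{t}$ with $t\in(t_1,t_2)\cap\QQ$, uses adjunction and the crepant pullback of $K_Y+\sum_iC_{i,Y}+E_{t_1}+E_{t_2}$ to show that the strict transform of $D_Y$ meets exactly one exceptional $E_{t'}$, and then contracts down to $Y_{t'}$ to contradict the hypothesis that $\sum_i a_iC_{i,t'}$ is ample. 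This argument uses the ampleness hypothesis at \emph{intermediate} rational weights $t'\in(t_1,t_2)$, not just at $t_1,t_2$, and is not replaceable by the intersection numbers $B_Y\cdot E_{t_j}$.

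A secondary issue: your claim that $B_Y$ is big and nef on the common model for an \emph{arbitrary} pair $t_1<t_2$ in $I\cap\QQ$ fails when $C$ is irreducible. There one computes $(C_Y^2)=d-\tfrac{1}{t_1}-t_2-2$, which can be negative even when both $t_1,t_2\in I_d$ (e.g.\ $d=5$, $t_1=0.4$, $t_2=2.5$). The paper accordingly only claims bigness for $t_1,t_2$ sufficiently close to a fixed $t_0\in I$, and then covers $I$ by such short intervals; your covering argument should be organized the same way. Once each $t_0\in I$ lies in the interior of some $(t_1,t_2)$ handled this way, the ``rational endpoint'' matching in your last paragraph is unnecessary.
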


\begin{proof}
Let $t_0\in I$ and $t_1, t_2\in I\cap \QQ$ such that $t_1 < t_0 < t_2$. Let $f: Y\to X$ be the birational morphism which extracts $E_{t_1}$ and $E_{t_2}$ via weighted blow-ups.\\
\noindent\textit{Step 1.} In this step, we prove the following statement: for any $t_1,t_2\in \QQ$ sufficiently close to $t_0$, $\sum_i a_i C_{i,Y}$ is big and nef. Here, $C_{i,Y}$ is the strict transform of $C_i$ on $Y$.

Write $t_1 = \frac{q_1}{p_1}$ and $t_2 = \frac{q_2}{p_2}$. By a toric computation, we have 
\begin{align*}
(E_{t_1}^2) &= -\frac{q_2}{q_1(q_2p_1-p_2q_1)}, \\
(E_{t_2}^2) &= -\frac{p_1}{p_2(q_2p_1-p_2q_1)}, \\
(E_{t_1}\cdot E_{t_2}) &= \frac{1}{q_2p_1-p_2q_1}.
\end{align*}
\begin{itemize}
\item Case 1: $C = C_1$ is irreducible. 

In this case, we have
    \[
    f^*C = C_Y + (p_1+q_1)E_{t_1} + (p_2+q_2)E_{t_2},
    \]
so $(C_Y \cdot E_{t_1}) = \frac{1}{q_1}, (C_Y \cdot E_{t_2}) = \frac{1}{p_2}$,
    and 
    \[
    (C_Y^2) = (C^2) - \frac{p_1p_2 + q_1q_2 + 2p_2q_1}{p_2q_1} = d - \frac{1}{t_1} - t_2 - 2.
    \]
For any $t\in I\cap \QQ$, we have
\[
(C_t^2) = d - \frac{(1+t)^2}{t} > 0.\]
Since the set $\{t>0:  d - \frac{(1+t)^2}{t} > 0\}$ is an open interval, we have
\[
d - \frac{(1+t_0)^2}{t_0} = d -\frac{1}{t_0} - t_0 - 2 > 0\]
Thus, $(C_Y^2)>0$ for $t_1,t_2$ sufficiently close to $t_0$ and hence $C_Y$ is big and nef.
\item Case 2: $C$ is reducible.

In this case, we may assume $x\in C_1\cap C_2$. Then
\begin{align*}
f^*C_1 &= C_{1,Y} + p_1 E_{t_1} + p_2 E_{t_2}, \\
f^*C_2 &= C_{2,Y} + q_1 E_{t_1} + q_2E_{t_2},\\
f^*C_j &= C_{j,Y}, \text{ if }j\geq 3.
\end{align*}
Thus,
\begin{align*}
\left(\sum_i a_i C_{i,Y} \right)\cdot C_{1,Y} &= \left(\sum_i a_i C_i\right) \cdot C_1  - \frac{a_1}{t_1} - a_2, \\
\left(\sum_i a_i C_{i,Y} \right)\cdot C_{2,Y} &= \left(\sum_i a_i C_i\right) \cdot C_2 - a_1 - a_2t_2, \\
\left(\sum_i a_i C_{i,Y} \right)\cdot C_{j,Y} &= \left(\sum_i a_i C_i\right) \cdot C_j, \text{ if }j\geq 3.
\end{align*}
On the other hand, for any $t\in I\cap \QQ$, 
\begin{align*}
0<\left(\sum_i a_i C_{i,t} \right)\cdot C_{1,t} &= \left(\sum_i a_i C_i\right) \cdot C_1  - \frac{a_1}{t} - a_2, \\
0<\left(\sum_i a_i C_{i,t} \right)\cdot C_{2,t} &= \left(\sum_i a_i C_i\right) \cdot C_2 - a_1 - a_2t , \\
0<\left(\sum_i a_i C_{i,t} \right)\cdot C_{j,t} &= \left(\sum_i a_i C_i\right) \cdot C_j , \text{ if }j\geq 3.
\end{align*}
Thus, $\sum_i a_i C_{i,Y}$ is big and nef for all $t_1,t_2\in I\cap \QQ$. 
\end{itemize}
From now on, we fix a choice of $t_1,t_2\in I\cap \QQ$ such that $\sum_i a_iC_{i,Y}$ is big and nef.\\

\noindent\textit{Step 2.} In this step, we prove that if there exists an irreducible curve $D_Y$ on $Y$ such that $C_{i,Y} \cdot D_Y =0$ for all $1\leq i\leq k$, then $y\not\in D_Y$, where $y$ is the unique intersection point of $E_{t_1}$ and $E_{t_2}$.

Assume that $y\in D_Y$. By the Hodge index theorem, $(D_Y^2)<0$. We may perform a sequence of blow-ups centered at $y$ to obtain $g: Z\to Y$ which satisfies the following properties:
    \begin{itemize}
        \item Every exceptional divisor of $g$ has the form $E_{t}$ for some $t\in (t_1,t_2)\cap \QQ$,
        \item Every component of $E_{t_1} + E_{t_2} + \text{Ex}(g)$ is either disjoint from $D_Z:= g^{-1}_*D_Y$, or intersects $D_Z$ at smooth points of $Z$.
        \item There exists $t'\in [t_1,t_2]\cap \QQ$ such that $E_{t'}$ and $D_Z$ intersect at some smooth points of $Z$.
    \end{itemize}
    Furthermore, we have
    \[
    g^*\left(K_Y + \sum_i C_{i,Y} + E_{t_1} + E_{t_2}\right) = K_Z + \sum_i C_{i,Z} + E_{t_1} + E_{t_2} + \text{Ex}(g)
    \]
    and hence
    \begin{align*}
        -2\leq (K_Z + D_Z)\cdot D_Z = (D_Z^2) - (E_{t_1} + E_{t_2} + \text{Ex}(g))\cdot D_Z 
    \end{align*}
    This implies that
    \[
    (E_{t_1} + E_{t_2} + \text{Ex}(g))\cdot D_Z < 2
    \]
    Note that for every component $F$ of $E_{t_1} + E_{t_2} + \text{Ex}(g)$, $(F\cdot D_Z)$ is a nonnegative integer. As a result, $E_{t'}$ is the only component which intersects $D_Z$. After blowing down every component of $E_{t_1} + E_{t_2} + \text{Ex}(g)$ except $E_{t'}$, we obtain the variety $Y_{t'}$. On $Y_{t'}$, we have $(C_{i,t'}\cdot D_{t'}) = 0$ for all $1\leq i\leq k$, where $D_{t'}$ is the image of $D_Y$. However, this contradicts with the assumption that $\sum_i a_iC_{i,t'}$ is ample.\\
    
    \noindent\textit{Step 3.} 
    By step 1 and 2, there exists an effective $\QQ$-divisor $F_Y$ on $Y$ not containing $y$, such that $\sum_i a_iC_{i,Y} - \epsilon F_Y$ is ample for any sufficiently small $\epsilon >0$. After rescaling we may assume that $a_i\in (0,1)$ for all $i$. Then
    \[
    - \left(K_Y + \epsilon F_Y + \sum_i (1-a_i)C_{i,Y}+ E_{t_1} + E_{t_2}\right) \sim_\QQ \sum_i a_iC_{i,Y} - \epsilon F_Y
    \]
    is ample. Since $y\not\in \supp(F_Y)$,  $(Y, \epsilon F_Y+\sum_i (1-a_i)C_{i,Y}+ E_{t_1} + E_{t_2} )$ is qdlt for sufficiently small $\epsilon$. This implies that $(Y, E_{t_1} + E_{t_2})\to X$ is a qdlt Fano type model. By Theorem~\ref{thm:qdlt_fano_type_model_and_higher_rank_degeneration}, for all $t\in (t_1,t_2)$, $v_t$ is special and induces isomorphic special degenerations of $X$.
\end{proof}

\subsection{Special divisors over $X$}
The goal of this subsection is to classify special divisorial valuations in $\text{QM}(X,C)$.

We first study special divisors in $\text{QM}(X,C)$ in the case where every component of $C$ is nef.
\begin{proposition}\label{cor:special_divisors_all_boundaries}
    Let $X$ be a smooth del Pezzo surface of degree $d$. Let $C\sim -K_X$ be normal crossing divisor on $X$. Let $x\in C$ be a node. For $t>0$, let $v_t$ denote the quasi-monomial valuation centered at $x$ with weight $(1,t)$ along the two branches of $C$. For $t\in \QQ_{>0}$, let $E_t$ denote the divisor over $X$ corresponding to the divisorial valuation $v_t$, and let $Y_t\to X$ be the birational morphism extracting the divisor $E_t$. Assume that $C = C_1 + \cdots + C_k$ has $k$ irreducible components and $C_i^2 \geq 0$ for all $1\leq i\leq k$.
    \begin{itemize}
        \item[(a)] Suppose $k=1$. Then $E_t$ is special over $X$ if and only if $d\geq 5$ and $t\in I_d:= \left(\frac{d-2-\sqrt{d^2-4d}}{2}, \frac{d-2+\sqrt{d^2-4d}}{2}\right)$.
        \item[(b)] Suppose $k=2$. Then $E_t$ is special over $X$ if and only if $d\geq 5$.
        \item[(c)] Suppose $k\geq 3$. Then $E_t$ is special over $X$ for all $t\in \QQ_{>0}$.
    \end{itemize}
\end{proposition}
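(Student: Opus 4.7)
My plan is to apply Proposition~\ref{prop: special_divisor_dP_surface_criterion}: $E_t$ is special over $X$ if and only if there exists an effective $\QQ$-divisor $B$ on $Y_t$ with $\supp(B)=\supp(C_Y)$ and $B\cdot F>0$ for every component $F$ of $C_Y$. Writing $B=\sum_{i=1}^k a_i C_{i,Y}$ with all $a_i>0$, this translates into asking whether the intersection matrix $M=(C_{i,Y}\cdot C_{j,Y})_{i,j}$ on $Y_t$ admits a strictly positive vector $a\in\RR^k_{>0}$ with $Ma>0$ componentwise. Using the identities $(E_t^2)=-1/(pq)$ and $f^*C_i=C_{i,Y}+w_i E_t$ (where $t=q/p$ in lowest terms and $w_i\in\{0,p,q\}$ is the weight on the branch of $C_i$ at $x$), I can compute $M_{ii}=C_i^2-w_i^2/(pq)$ and $M_{ij}=C_i\cdot C_j-w_iw_j/(pq)$ in terms of the intersection numbers on $X$.

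For case (a), $C_Y$ is irreducible so $B=a_1 C_Y$, and the criterion reduces to $(C_Y^2)=d-(1+t)^2/t>0$, i.e., $t^2-(d-2)t+1<0$. The discriminant $d^2-4d$ is negative for $d\leq 3$, zero for $d=4$ (producing only a double root at $t=1$ where the strict inequality fails), and positive for $d\geq 5$, where the solution set is the open interval $I_d$.

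For case (b), the matrix is $M=\begin{pmatrix}C_1^2-1/t & 1 \\ 1 & C_2^2-t\end{pmatrix}$. Nefness of $C_1$ and $C_2$ forces $C_1^2,C_2^2\geq 0$, and $(C_1+C_2)^2=d$ gives $C_1^2+C_2^2=d-4$. If at least one diagonal entry of $M$ is nonnegative, an explicit one-parameter choice of $a$ makes both entries of $Ma$ positive; in the remaining ``both negative'' region a short $2\times 2$ linear-algebra check shows that a positive $a$ with $Ma>0$ exists if and only if $\det M<0$. The main technical step is the sign analysis here: for $d\geq 5$ I will verify $\det M=C_1^2 C_2^2-C_1^2 t-C_2^2/t<0$ throughout the both-negative region (which, as a short calculation shows, is non-empty only when $C_1^2 C_2^2<1$), so $E_t$ is always special; for $d=4$ nefness forces $C_1^2=C_2^2=0$, whence $M$ has rank one with image spanned by $(1,-t)$ and no positive $a$ can satisfy $Ma>0$.

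For case (c), the self-intersection sum $\sum_i C_i^2=d-6$ (if $k=3$) or $d-2k$ (if $k\geq 4$), combined with $C_i^2\geq 0$ and $d\leq 9$, forces $k\in\{3,4\}$. The matrix $M$ satisfies $M_{12}=0$ (the node is separated by the blow-up), $M_{11}=C_1^2-1/t$, $M_{22}=C_2^2-t$, $M_{ii}=C_i^2$ for $i\geq 3$, and $M_{ij}=C_i\cdot C_j\geq 0$ otherwise. Taking $a_1=a_2=1$ and $a_j=N$ for $j\geq 3$ with $N$ large, the entries $(Ma)_1$ and $(Ma)_2$ each pick up a $+N$ from the coupling with an adjacent component in the cycle that lies among $C_3,\dots,C_k$, overwhelming the bounded diagonal contribution; and for $j\geq 3$, $(Ma)_j$ contains a constant term $\geq 1$ together with an $N$-term with nonnegative coefficient, hence is positive. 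Thus $E_t$ is special for every $t\in\QQ_{>0}$.
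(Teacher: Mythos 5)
Your proposal is correct and follows essentially the same route as the paper: both reduce to the criterion of Proposition~\ref{prop: special_divisor_dP_surface_criterion}, compute the intersection numbers $C_{i,Y}^2 = C_i^2 - w_i^2/(pq)$ and $C_{i,Y}\cdot C_{j,Y} = C_i\cdot C_j - w_iw_j/(pq)$ on $Y_t$, and then exhibit (or rule out) positive coefficients $a_i$ — indeed your choice $a_1=a_2=1$, $a_j=N$ in (c) is the paper's $a_1=a_2=\epsilon$, $a_j=1$ up to rescaling, and your rank-one obstruction for $d=4$ is the paper's identity $(a_1C_{1,t}+a_2C_{2,t})\cdot(tC_{1,t}+C_{2,t})=0$. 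The only cosmetic difference is that in (b) you organize the existence of $a$ via a determinant criterion for the $2\times2$ intersection matrix, where the paper just writes down $a_1=t$, $a_2=1-\epsilon$ explicitly.
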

\begin{proof}
(a) By Proposition~\ref{prop: special_divisor_dP_surface_criterion}, $E_t$ is special over $X$ if and only if $C_t$ is big and nef. We may compute
\[
C_t^2 = C^2 - \frac{(1+t)^2}{t} = d -  \frac{(1+t)^2}{t}.\]
Thus, $C_t^2 > 0$ if and only if $d\geq 5$ and $t\in I_d$.\\

(b) When $k=2$, $C_1\cdot C_2 = 2$ and $C_1^2 + C_2^2 = d-4$. For any $a_1,a_2 > 0$, we can compute
\begin{align*}
(a_1C_{1,t} + a_2C_{2,t})\cdot C_{1,t} &= a_1\left(C_1^2 - \frac{1}{t}\right) + a_2, \\
(a_1C_{1,t} + a_2C_{2,t})\cdot C_{2,t} &= a_1+a_2(C_2^2-t).
\end{align*}
If $d = 4$, then $C_1^2 = C_2^2 = 0$. This gives
\[
(a_1C_{1,t} + a_2C_{2,t})\cdot (tC_{1,t}  +C_{2,t}) = 0.
\]
By Proposition~\ref{prop: special_divisor_dP_surface_criterion}, $E_t$ cannot be a special divisor over $X$.

Now assume that $d\geq 5$. Then at least one of $C_1^2$ and $C_2^2$ is positive. If $C_1^2 > 0$, pick $a_1 = t$ and $a_2 = 1-\epsilon$ for a sufficiently small $\epsilon >0$. Then
\begin{align*}
(a_1C_{1,t} + a_2C_{2,t})\cdot C_{1,t} &= a_1\left(C_1^2 - \frac{1}{t}\right) + a_2 = tC_1^2 - \epsilon > 0, \\
(a_1C_{1,t} + a_2C_{2,t})\cdot C_{2,t} &= a_1+a_2(C_2^2-t) = a_2C_2^2 + \epsilon > 0.
\end{align*}
By Proposition~\ref{prop: special_divisor_dP_surface_criterion}, $E_t$ is special over $X$. The proof is similar in the case $C_2^2 > 0$.

(c)  Suppose $k\geq 3$. We may assume that $x\in C_1\cap C_2$ and $C_i \cdot C_{i+1} = 1$ for all $1\leq i\leq k$ (where $C_{k+1} = C_1$). Pick $a_1 = a_2 = \epsilon$ for a sufficiently small $\epsilon >0$ and $a_i = 1$ for all $3\leq i\leq k$. Then 
\begin{align*}
\left(\sum_{i=1}^k a_iC_{i,t}\right)\cdot C_{1,t} &= a_1\left(C_1^2 - \frac{1}{t}\right) + a_k > 0, \\
\left(\sum_{i=1}^k a_iC_{i,t}\right)\cdot C_{2,t} &= a_2(C_2^2-t) + a_3 > 0, \\
\left(\sum_{i=1}^k a_iC_{i,t}\right)\cdot C_{j,t} &\geq a_{j-1} + a_{j+1} > 0, \text{ for all }j\geq 3.
\end{align*}
Thus, $E_t$ is special over $X$ by Proposition~\ref{prop: special_divisor_dP_surface_criterion}.
\end{proof}

In the case where some component of $C$ is not nef, we prove the following lemma.

\begin{lemma}\label{lemma:equivalence_of_special_divisors_after_contracting_-1_curves}
Let $X$ be a smooth del Pezzo surface. Let $C\sim -K_X$ be a nodal curve on $X$ such that $(X,C)$ is log canonical. Let $C_0, C_1,\ldots, C_k$ be irreducible components of $C$ with $k\geq 2$. Suppose that $C_0^2 = -1$ and $\pi: X\to X'$ is the birational morphism which contracts $C_1$. Let $E$ be a divisor over $X$. Then the following are equivalent:
\begin{itemize}
\item [(a)] $E$ is an log canonical place of $(X,C)$ and $E$ is special over $X$.
\item [(b)] $E$ is an log canonical place of $(X',C':= \pi(C))$ and $E$ is special over $X'$.
\end{itemize}
\end{lemma}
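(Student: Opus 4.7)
The plan is to combine two facts: (i) the morphism $\pi: (X,C) \to (X',C')$ is crepant, so lc places are preserved; (ii) the numerical criterion for special divisors in Proposition~\ref{prop: special_divisor_dP_surface_criterion} can be transported between extractions of $E$ over $X$ and over $X'$ via the induced morphism $p: Y \to Y'$.

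For (i), I will note that $\pi$ must be the blow-down of a $(-1)$-curve (so implicitly $C_1^2 = -1$), and since $C$ is a cycle with at least three components, $C_1$ meets two other components of $C$ transversely at distinct points. Consequently $\pi(C_1)$ is a node of $C'$ with multiplicity $2$, so $\pi^* C' = (C - C_1) + 2 C_1 = C + C_1$. Combined with the discrepancy relation $K_X = \pi^* K_{X'} + C_1$, this gives $K_X + C = \pi^*(K_{X'} + C')$, whence $a(E; X, C) = a(E; X', C')$ for every divisor $E$ over $X$. The lc place condition in (a) therefore matches that in (b).

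For (ii), assume the lc place condition holds, and let $f: Y \to X$ and $f': Y' \to X'$ extract $E$. The composition $Y \to X \to X'$ extracts only $E$ over $X'$, so by the universal property it factors through a birational morphism $p: Y \to Y'$ whose exceptional locus lies over $\pi(C_1)$ and which contracts the strict transform $C_{1,Y}$. To prove (a) $\Rightarrow$ (b), I will push forward $B = \sum_{i=0}^k a_i C_{i,Y}$ to $B' = p_* B = \sum_{i \neq 1} a_i C_{i,Y'}$; using the projection formula together with $p^* C_{j,Y'} = C_{j,Y} + (\text{nonnegative combination of }p\text{-exceptional curves})$, I expect $B' \cdot C_{j,Y'} \geq B \cdot C_{j,Y} > 0$ for every $j \neq 1$. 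For (b) $\Rightarrow$ (a), starting from $B' = \sum_{i \neq 1} a'_i C_{i,Y'}$, I will define $B = \sum_{i \neq 1} a'_i C_{i,Y} + \epsilon C_{1,Y}$ for small $\epsilon > 0$; comparing $p^* B' \cdot C_{j,Y} = B' \cdot C_{j,Y'}$ with $B \cdot C_{j,Y}$ yields positivity for $j \neq 1$ whenever $p$ contracts only $C_{1,Y}$, while $B \cdot C_{1,Y} > 0$ follows from the transversal intersections $C_{1,Y} \cdot C_{0,Y},\, C_{1,Y} \cdot C_{2,Y} \geq 1$.

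The main obstacle is that when the center of $E$ on $X$ lies on $C_1$ (or at one of the nodes $C_1 \cap C_0,\, C_1 \cap C_2$), the morphism $p$ may contract more than just $C_{1,Y}$, and the pullback $p^* C_{j,Y'}$ acquires nontrivial multiples of additional exceptional divisors; the corresponding corrections in the projection-formula computation no longer leave a manifestly positive quantity and must be estimated carefully using that $E$ is an lc place (so the discrepancies of the exceptional divisors of $f$ are controlled). A secondary point is the case $E = C_j$ for some $j \neq 1$, where the extraction is trivial and the convention $C_Y = C - E$ in Proposition~\ref{prop: special_divisor_dP_surface_criterion} must be respected throughout; this will be handled by a direct inspection parallel to the main argument.
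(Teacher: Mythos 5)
Your overall route is the same as the paper's: treat the case $E\subseteq X$ separately, and otherwise transport the numerical criterion of Proposition~\ref{prop: special_divisor_dP_surface_criterion} between the extraction $Y\to X$ of $E$ and the extraction $Y'\to X'$ of $E$ via the contraction $p\colon Y\to Y'$ of the strict transform of the $(-1)$-curve. Your explicit crepancy computation $K_X+C=\pi^*(K_{X'}+C')$ is a worthwhile addition (the paper leaves the equivalence of the lc-place conditions implicit). However, the ``main obstacle'' you flag is vacuous: $\operatorname{Exc}(Y/X')$ consists of $E$ together with the single irreducible curve $C_{1,Y}$, while $\operatorname{Exc}(Y'/X')=E$, so $p$ contracts exactly $C_{1,Y}$ and nothing more, regardless of whether the center of $E$ is a node on $C_1$. (One checks directly that $C_{1,Y}^2<0$ and that $\{E,C_{1,Y}\}$ has negative definite intersection matrix, so the contraction exists and its target is the unique extraction of $E$ over $X'$.) In particular $p^*C_{j,Y'}=C_{j,Y}+c_jC_{1,Y}$ with $c_j\ge 0$ and no further exceptional terms, and your $(a)\Rightarrow(b)$ computation closes once you note that the correction term $c_j\,(B\cdot C_{1,Y})$ is nonnegative precisely because the hypothesis of Proposition~\ref{prop: special_divisor_dP_surface_criterion} already gives $B\cdot C_{1,Y}>0$.

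The genuine gap is in your $(b)\Rightarrow(a)$ step. You take $B=\sum_{i\neq 1}a_i'C_{i,Y}+\epsilon C_{1,Y}$, i.e.\ the strict transform of $B'$ plus a small multiple of the contracted curve. Writing $p^*B'=\sum_{i\neq 1}a_i'C_{i,Y}+c\,C_{1,Y}$, your $B$ equals $p^*B'-(c-\epsilon)C_{1,Y}$, where $c=-\bigl(\sum_{i\neq1}a_i'C_{i,Y}\cdot C_{1,Y}\bigr)/C_{1,Y}^2$ is bounded away from $0$ (at least one of $C_{0,Y},C_{2,Y}$ still meets $C_{1,Y}$ on $Y$). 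Hence for a component $C_j$ adjacent to the contracted curve, $B\cdot C_{j,Y}=B'\cdot C_{j,Y'}-(c-\epsilon)\,C_{1,Y}\cdot C_{j,Y}$, and the subtracted term is not small; if $B'\cdot C_{j,Y'}$ is small this is negative, so your $B$ fails the criterion. The fix is to perturb from the other end, as the paper does: take $B=p^*B'-\epsilon C_{1,Y}$, so that $B\cdot C_{1,Y}=-\epsilon C_{1,Y}^2>0$ and $B\cdot C_{j,Y}=B'\cdot C_{j,Y'}-\epsilon\,C_{1,Y}\cdot C_{j,Y}>0$ for small $\epsilon$. (A final cosmetic point: the statement's indexing of which curve is contracted is internally inconsistent with $C_0^2=-1$; your reading, with the contracted curve being a $(-1)$-curve, is the intended one.)
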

\begin{proof}
\noindent\textbf{Case 1:} $E$ is contained in $X$. If $E = C_0$, then (a) and (b) are equivalent by Proposition~\ref{prop: special_divisor_dP_surface_criterion}. Thus, it suffices to consider the case $E = C_i$ for some $i\geq 1$.

Suppose that $C_i$ is special over $X$. By Proposition~\ref{prop: special_divisor_dP_surface_criterion}, there exists an effective $\QQ$-divisor $B$ such that $\supp(B) = \supp(C - C_i)$ and $B\cdot C_j > 0$ for any $0\leq j\leq k$ and $j\neq i$. Let $B' = \pi_*B$. Then $B'$ is big and nef, $\supp(B') = \supp(C' - C_i')$ and $B' \cdot C_j' \geq B\cdot C_j > 0$ for $1\leq j\leq k$ and $j\neq i$. Thus, $C_i'$ is special over $X'$ by Proposition~\ref{prop: special_divisor_dP_surface_criterion}.

Suppose that $C_i'$ is special over $X'$. By Proposition~\ref{prop: special_divisor_dP_surface_criterion}, there exists an effective $\QQ$-divisor $B'$ on $X'$ such that $\supp(B') = \supp(C' - C_i')$ and $B'\cdot C_j' > 0$ for any $0\leq j\leq k$ and $j\neq i$. Let $B = \pi^*B' -\epsilon C_0$ for some sufficiently small $\epsilon > 0$. Then $B$ is effective and $\supp(B) = \supp(C-C_i)$. Furthermore, we have
\[
B\cdot C_0 = -\epsilon C_0^2 > 0
\]
and for $1\leq j\leq k$, $j\neq i$,
\[
B\cdot C_j = B'\cdot C_j' - \epsilon C_0\cdot C_j > 0.\]
By Proposition~\ref{prop: special_divisor_dP_surface_criterion}, $C_i$ is special over $X$.

\noindent\textbf{Case 2:} $E$ is not contained in $X$.

We first prove that (a) implies (b). Let $Y\to X$ be the birational morphism extracting $E$. For $1\leq i\leq k$, let $C_{i,Y}$ be the strict transform of $C_i$ on $Y$. By Proposition~\ref{prop: special_divisor_dP_surface_criterion}, there exists $a_1,\ldots,a_k>0$ such that  $(\sum_i a_iC_{i,Y})\cdot C_{i,Y}>0$ for all $1\leq i\leq k$. Let $f:Y\to Y'$ be the birational morphism which contracts $C_{1,Y}$. Let $C_{i,Y'}$ be the image of $C_{i,Y}$ for $2\leq i\leq k$. Then 
\[
f^*\left(\sum_{i=2}^k a_iC_{i,Y'}\right) = \sum_{i=1}^k a_iC_{i,Y} + a_1' C_{1,Y}\]
for some $a_1' > 0$. Thus, for $2\leq i\leq k$,
\[
\left(\sum_{i=2}^k a_iC_{i,Y'}\right)\cdot C_{i,Y'} \geq \left(\sum_{i=2}^k a_iC_{i,Y}\right)\cdot C_{i,Y} >0.
\]
Since $\sum_{i=2}^k C_{i,Y'}$ is the strict transform of $\pi(C)$ on $Y'$, (b) follows from Proposition~\ref{prop: special_divisor_dP_surface_criterion}.

Next, we prove that (b) implies (a). Take $Y'$ and $Y$ as in the previous paragraph. By Proposition~\ref{prop: special_divisor_dP_surface_criterion}, there exists $a_2,\ldots,a_k>0$ such that 
\[
\left(\sum_{i=2}^k a_iC_{i,Y'}\right)\cdot C_{i,Y'}  >0
\]
for all $2\leq i\leq k$. Let $a_1 > 0$ such that
\[
\sum_{i=1}^k a_i C_{i,Y} = f^*\left(\sum_{i=2}^k a_iC_{i,Y'}\right).\]
Then, for any $2\leq i\leq k$ and sufficiently small $\epsilon > 0$,
\[
\left((a_1-\epsilon) C_{1,Y} + \sum_{i=2}^k a_i C_{i,Y} \right)\cdot C_{i,Y} = \left(\sum_{i=2}^k a_iC_{i,Y'}\right)\cdot C_{i,Y'} - \epsilon C_{1,Y} \cdot C_{i,Y} > 0.
\]
We also have
\[
\left((a_1-\epsilon) C_{1,Y} + \sum_{i=2}^k a_i C_{i,Y} \right)\cdot C_{1,Y} = -\epsilon C_{1,Y}^2 > 0.
\]
This proves (a) by Propostion~\ref{prop: special_divisor_dP_surface_criterion}.
\end{proof}

\subsection{Special valuations over $X$, Part I}
The goal of this subsection is to classify valuations in $\text{QM}(X,C)$ which are special over $X$, under the additional assumption that every component of $C$ is nef.
Throughout this subsection, we consider the following set-up.\\

\noindent\textbf{Set-up.}
Let $X$ be a smooth del Pezzo surface and $C\sim -K_X$ be a normal crossing divisor on $X$. Let $x\in C$ be a node. For $t>0$, let $v_t$ denote the quasi-monomial valuation centered at $x$ with weight $(1,t)$ along the two branches of $C$. For $t\in \QQ$, let $E_t$ denote the divisor over $X$ corresponding to the divisorial valuation $v_t$, and let $Y_t\to X$ be the birational morphism extracting the divisor $E_t$. Suppose $C = C_1 + \cdots + C_k$ where $C_i$ is irreducible and nef for all $1\leq i\leq k$. Let $C_{i,t}$ be the strict transform of $C_i$ on $Y_t$.

\begin{lemma} \label{lemma: positivity_of_intersection_product}
Suppose $I$ is an open interval such that $E_t$ is a special divisor for all $t\in I\cap \QQ$. Then for any real number $t_0\in I$, there exists an open interval $I'\subseteq I$ containing $t_0$ and $a_1,\ldots, a_k>0$ such that for all $t\in I'\cap \QQ$ and $1\leq j\leq k$, we have
\[
\left(\sum_{i=1}^k a_i C_{i,t}\right)\cdot C_{j,t} > 0.
\]
\end{lemma}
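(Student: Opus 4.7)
The strategy is to combine continuity of the intersection numbers $(C_{i,t}\cdot C_{j,t})$ in $t$ with their explicit monotonic dependence on $t$, and then to exhibit feasible $a$ near each $t_0\in I$ by adapting the explicit choices from Proposition~\ref{cor:special_divisors_all_boundaries}.

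From the toric computations underlying Proposition~\ref{prop: special_valuation_dP_surface_criterion}, the only intersection numbers $(C_{i,t}\cdot C_{j,t})$ that depend on $t$ are $(C_{1,t}^2)=C_1^2-1/t$ and $(C_{2,t}^2)=C_2^2-t$, where $C_1,C_2$ denote the two branches of $C$ at the node $x$. Writing $f_j(t;a):=\bigl(\sum_{i=1}^k a_iC_{i,t}\bigr)\cdot C_{j,t}$, this makes $f_1(\cdot;a)$ strictly increasing in $t$, $f_2(\cdot;a)$ strictly decreasing in $t$, and $f_j(\cdot;a)$ constant in $t$ for $j\geq 3$. Let $K(t):=\{a\in\RR_{>0}^k:f_j(t;a)>0\ \forall j\}$; by Proposition~\ref{prop: special_divisor_dP_surface_criterion} and the hypothesis, $K(t)\neq\varnothing$ for every rational $t\in I$. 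The monotonicity yields the following key consequence: if $a\in K(t_1)\cap K(t_2)$ for some $t_1<t_2$, then each $f_j(\cdot;a)$ attains its minimum on $[t_1,t_2]$ at an endpoint and is positive throughout, so $a\in K(t)$ for every $t\in[t_1,t_2]$. Hence the lemma reduces to producing, for each $t_0\in I$, rational $t_1<t_0<t_2$ in $I$ together with $a\in K(t_1)\cap K(t_2)$; then $I':=(t_1,t_2)$ answers the statement.

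I would construct such $a$ case by case, following Proposition~\ref{cor:special_divisors_all_boundaries}. For $k=1$, the hypothesis forces $I\subseteq I_d$, so $(C_{1,t}^2)>0$ throughout $I$ and any $a_1>0$ works on all of $I$. For $k\geq 3$, Proposition~\ref{cor:special_divisors_all_boundaries}(c) suggests $a_1=a_2=\epsilon$ and $a_j=1$ for $j\geq 3$: each $f_j(t;a)$ is continuous in $t$ and positive at $t=t_0$ for sufficiently small $\epsilon>0$, so by continuity the same $\epsilon$ works on a closed subinterval $[t_1,t_2]\ni t_0$. For $k=2$ with $d\geq 5$, I adapt Proposition~\ref{cor:special_divisors_all_boundaries}(b): if $C_1^2>0$, the choice $a_1=t_0$, $a_2=1-\epsilon$ gives $f_1(t_0;a)=t_0C_1^2-\epsilon>0$ and $f_2(t_0;a)=\epsilon t_0+(1-\epsilon)C_2^2>0$ for small $\epsilon$, and extends by continuity to a neighborhood of $t_0$ (the case $C_2^2>0$ is handled symmetrically).

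The main obstacle is that the constructions in Proposition~\ref{cor:special_divisors_all_boundaries} were originally carried out at rational $t$ where $E_t$ is a genuine divisor; here they must be shown to produce $a\in K(t_0)$ for real $t_0\in I$ and to extend uniformly to an open neighborhood. This is resolved by direct inspection once continuity of each $f_j(t;a)$ in $t$ is noted: the strict inequality $f_j(t_0;a)>0$ automatically propagates to an open interval around $t_0$, and taking the intersection over the finitely many $j$ yields the common neighborhood $I'\ni t_0$ on which the single $a$ satisfies all required inequalities for every rational $t\in I'$.
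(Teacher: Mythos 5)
Your proposal is correct and follows essentially the same route as the paper: it handles the cases $k=1$ (using $I\subseteq I_d$), $k=2$, and $k\geq 3$ with the same explicit choices of $a_i$ taken from Proposition~\ref{cor:special_divisors_all_boundaries}, verifies positivity at $t_0$, and spreads it to a neighborhood by continuity of the $f_j$. The extra monotonicity-in-$t$ observation is a harmless refinement that the paper does not need and you ultimately do not rely on either.
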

\begin{proof}
\noindent\textbf{Case 1:} $k = 1$.

In this case, $I \subseteq I_d =  \left(\frac{d-2-\sqrt{d^2-4d}}{2}, \frac{d-2+\sqrt{d^2-4d}}{2}\right)$ by Corollary~\ref{cor:special_divisors_all_boundaries}. Pick $I' = I$. Then for any $t\in I'\cap \QQ$, we have
\[
C_t^2 = d - \frac{(1+t)^2}{t} > 0.\]
\\
\noindent\textbf{Case 2:} $k = 2$. 

By Corollary~\ref{cor:special_divisors_all_boundaries}, $d\geq 5$, so $C_1^2 + C_2^2 = d-4 > 0$. We may assume that $C_1^2 > 0$. Fix $t_0\in I$ and pick $a_1 = t_0$, $a_2 = 1 - \epsilon$, where $\epsilon = \frac{1}{2}\min\{1,t_0\}$. Then for any $t\in \QQ$, we have
\begin{align*}
(a_1C_{1,t} + a_2C_{2,t})\cdot C_{1,t} &= a_1\left(C_1^2 - \frac{1}{t}\right) + a_2 \\
(a_1C_{1,t} + a_2C_{2,t})\cdot C_{2,t} &= a_1+a_2(C_2^2-t).
\end{align*}
Consider the function $f_1, f_2: \RR_{>0}\to \RR$ such that $f_1(t) = a_1(C_1^2 - \frac{1}{t}) + a_2$ and $f_2(t) = a_1+a_2(C_2^2-t)$. Then
\begin{align*}
f_1(t_0) &= t_0C_1^2 - \epsilon  \geq t_0C_1^2 = \frac{1}{2}t_0 > 0,\\
f_2(t_0) &= (1-\epsilon)C_2^2 + \epsilon t_0 > 0.
\end{align*}
Since $f_1$ and $f_2$ are continuous functions, there exists an open interval $I'\subseteq I$ containing $t_0$ such that $f_1(t), f_2(t) > 0$ for all $t\in I'$. \\

\noindent\textbf{Case 3:} $k \geq 3$. 

We may assume that $x\in C_1\cap C_2$ and $(C_i\cdot C_{i+1}) = 1$ for $1\leq i\leq k$ (where $C_{k+1} = C_1$). Fix $t_0>0$ and pick $a_1 = a_2 = \epsilon := \frac{1}{2}\min\{t_0, \frac{1}{t_0}\}$ and $a_j = 1$ for all $j\geq 3$. Similar to the previous case, for $1\leq j\leq k$, let $f_j:\RR_{>0} \to \RR$ be the continuous function such that
\[
f_j(t) =\left(\sum_{i=1}^k a_i C_{i,t}\right) \cdot C_{j,t}
\]
for all $t\in \QQ_{>0}$. Then
\begin{align*}
f_1(t_0) &= \epsilon\left(C_1^2 - \frac{1}{t_0}\right) + 1 > 0,\\
f_2(t_0) &= \epsilon(C_2^2 - t_0) + 1 > 0, \\
f_j(t_0) &\geq a_{j-1} + a_{j+1} > 0, \text{ for all }j\geq 3.
\end{align*}
Thus, by continuity of $f_j$'s, there exists an open interval $I'\subseteq I$ containing $t_0$ such that $f_j(t) > 0$ for all $t\in I'$ and $1\leq j\leq k$.
\end{proof}
\begin{remark}\label{remark:intersection_product_is_strictly_positive_continuous_function}
In fact, the same proof shows that for every $1\leq j\leq k$,
\[
\left(\sum_{i=1}^k a_iC_{i,t}\right)\cdot C_{j,t},
\]
viewed as a function of $t$, extends to a strictly positive continuous function on $I'$.
\end{remark}

\begin{lemma}\label{lemma:big_and_nef_and_not_ample_equivalent_to_existence_of_unicuspidal_curves}
Let $t=\frac{q}{p}\in \QQ_{>0}$. Suppose there exists $a_1,\ldots, a_k>0$ such that 
\[\left(\sum_{i=1}^k a_i C_{i,t}\right)\cdot C_{j,t} > 0\] for all $1\leq j\leq k$ and $\sum_i a_i C_{i,t}$ is not ample. Then, there exists an irreducible curve $D$ on $X$ such that
\begin{itemize}
\item $-K_X \cdot D = p + q$,
\item $D^2 = pq -1$, and
\item $D$ is a $(p,q)$-unicuspidal rational curve on $X$ which is well-formed with respect to $(C,x)$.
\end{itemize}
\end{lemma}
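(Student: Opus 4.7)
The plan is to produce the $(p,q)$-unicuspidal curve $D$ as the image under $f : Y_t \to X$ of an irreducible curve $D_Y \subseteq Y_t$ detecting the failure of ampleness of $L := \sum_{i=1}^k a_i C_{i,t}$, then read off the global numerics $-K_X \cdot D = p+q$, $D^2 = pq-1$ from adjunction on $Y_t$, and finally pin down the Newton polygon of $D$ at $x$ by matching local lower bounds coming from $\ord_{E_t}(D)$ with global upper bounds coming from $C_i \cdot D$.

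First I would verify that $L$ is big and nef. A toric computation using $E_t^2 = -\tfrac{1}{pq}$ and $f^*C_i \cdot E_t = 0$ yields $C_{1,t}\cdot E_t = \tfrac{1}{q}$, $C_{2,t}\cdot E_t = \tfrac{1}{p}$, and $C_{i,t}\cdot E_t = 0$ for $i \geq 3$ (with $C_{1,t}\cdot E_t = \tfrac{p+q}{pq}$ in the case $k=1$), so $L\cdot E_t > 0$; every other irreducible curve on $Y_t$ is distinct from the support of $L$ and thus intersects it nonnegatively, giving nefness. Bigness is then immediate from $L^2 = \sum_i a_i(L\cdot C_{i,t}) > 0$. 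Since $L$ is big, nef, and not ample, Nakai-Moishezon supplies an irreducible curve $D_Y$ with $L\cdot D_Y = 0$; positivity of the $a_i$ forces $C_{i,t}\cdot D_Y = 0$ for every $i$, so $D_Y \neq C_{j,t}$ and $D_Y \neq E_t$, and the Hodge index theorem gives $D_Y^2 < 0$.

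Next I would compute the numerics of $D := f(D_Y)$. Setting $\nu := \mult_{E_t}(f^*D)$ and using $f^*C_1 = C_{1,t} + pE_t$, $f^*C_2 = C_{2,t} + qE_t$, $f^*C_i = C_{i,t}$ for $i \geq 3$, the projection formula yields $-K_X\cdot D = C\cdot D = \tfrac{(p+q)\nu}{pq}$. Since this is a positive integer and $\gcd(p,q)=1$ implies $\gcd(pq, p+q) = 1$, one must have $\nu = kpq$ with $k \geq 1$ and $-K_X\cdot D = k(p+q)$. The discrepancy formula $K_{Y_t} = f^*K_X + (p+q-1)E_t$ then gives $K_{Y_t}\cdot D_Y = -k$, and adjunction on the smooth surface $Y_t$ forces $D_Y^2 \geq k - 2$; combined with $D_Y^2 < 0$ this pins down $k = 1$, $D_Y^2 = -1$, whence $-K_X\cdot D = p+q$ and $D^2 = D_Y^2 + \tfrac{\nu^2}{pq} = pq - 1$.

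For the local analysis at $x$, note that $p_a(D_Y) = 1 + \tfrac{1}{2}(K_{Y_t}\cdot D_Y + D_Y^2) = 0$, so $D_Y \cong \PP^1$ is smooth and rational and $D$ is rational with $p_a(D) = \tfrac{(p-1)(q-1)}{2}$. Let $(x_D, 0)$ and $(0, y_D)$ be the intercepts of the Newton polygon of $D$ at $x$ in branch coordinates $(x_1, x_2)$ for $(C_1, C_2)$; the condition $\ord_{E_t}(D) = \nu = pq$ with $\ord_{E_t}(x_1) = p$, $\ord_{E_t}(x_2) = q$ forces $x_D \geq q$, $y_D \geq p$, while the global values $C_1\cdot D = p$, $C_2\cdot D = q$ together with $(C_1\cdot D)_x = y_D$, $(C_2\cdot D)_x = x_D$ give the matching upper bounds $y_D \leq p$, $x_D \leq q$. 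Equality then produces exactly the claimed Newton polygon and shows $D\cap C \subseteq \{x\}$; since a single $(p,q)$-cusp at $x$ already accounts for the entire $\delta$-invariant $\tfrac{(p-1)(q-1)}{2} = p_a(D)$, $D$ has no further singularities. The main obstacle I expect is precisely this last step: producing the Newton polygon \emph{exactly} (rather than merely a containment) and ruling out stray intersections of $D$ with $C$ off $x$; both rely on the tight numerical match between the local lower bounds from $\ord_{E_t}(D)$ and the global upper bounds from $C_i\cdot D$, which is what makes the whole argument work.
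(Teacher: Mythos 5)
Your proposal is correct and follows essentially the same route as the paper: extract a curve $D_Y$ with $L\cdot D_Y=0$, use adjunction and the discrepancy $K_{Y_t}=f^*K_X+(p+q-1)E_t$ to pin down $E_t\cdot D_Y=1$, $D_Y^2=-1$, and then squeeze the Newton polygon between the local lower bounds from $\ord_{E_t}(D)=pq$ and the global upper bounds from $C\cdot D=p+q$ (your arithmetic derivation of $pq\mid\nu$ from $\gcd(pq,p+q)=1$ replaces the paper's geometric observation that $D_Y$ avoids $\mathrm{Sing}(Y_t)$, and your $\delta$-invariant count replaces the paper's use of $D_Y$ being the smooth normalization — both are valid). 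One cosmetic correction: $Y_t$ is \emph{not} smooth (it has two cyclic quotient singularities of orders $p$ and $q$ on $E_t$), but the adjunction inequality $(K_{Y_t}+D_Y)\cdot D_Y\ge -2$ and its equality case still hold on such a surface, and the integrality of $D_Y^2$ needed to conclude $D_Y^2=-1$ comes from $D_Y^2=D^2-pq\in\ZZ$ rather than from smoothness of $Y_t$.
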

\begin{proof}
Let $B_t = \sum_{i=1}^k a_iC_{i,t}$. Then $B_t$ is big and nef. If $B_t$ is not ample, then there exists an irreducible curve $D_t$ on $Y_t$ such that $B_t\cdot D_t = 0$. Note that $D_t\neq C_{j,t}$ for any $1\leq j\leq k$. Thus, $D_t\cdot C_{j,t} = 0$ for all $j$. Let $E_t$ be the exceptional divisor of $f_t$ and $D$ be the image of $D_t$ on $X$. By adjunction formula,
\begin{align*}
-2\leq (K_{Y_t} + D_t) \cdot D_t = \left(-\sum_{i}C_{i,t} - E_t + D_t\right) \cdot D_t = D_t^2 - E_t\cdot D_t < -E_t\cdot D_t,
\end{align*}
so $0\leq D_t\cdot E_t< 2$. Note that $D_t\cdot E_t > 0$ because $f_t^*(\sum_{i}C_i)\cdot D_t = -K_X \cdot D> 0$. Furthermore, $D_t$ and $E_t$ intersect at smooth points of $Y_t$ because every singular point of $Y_t$ lies on the support of $\sum_i C_{i,t}$. Thus, $D_t\cdot E_t = 1$. This implies that
\[
-K_X\cdot D =  f_t^*\left(\sum_i C_i\right)\cdot D_t =\left( (p+q)E_t + \sum_i C_{i,t}\right) \cdot D_t = p+q
\]
and
\[
f_t^*D = D_t + pq E_t\]
since $E_t^2 = -\frac{1}{pq}$. Thus, $D_t^2 = D^2 - pq$ is an integer. Since $-2\leq D_t^2 - E_t\cdot D_t$ (by adjunction) and $D_t^2 < 0$, we have $D_t^2 = -1$ and $D^2 = pq-1$. The equality 
\[
(K_{Y_t} + D_t) \cdot D_t = -2
\]
also implies that $D_t$ is a smooth rational curve on $Y_t$. In particular, $D$ is smooth or has a unique singular point at $x$.

Next, we show that $D$ is a $(p,q)$-unicuspidal curve well-formed with respect to $(C,x)$. Let $x_1, x_2$ be a local (analytic) coordinate near $x$ such that the local equation of $C$ is $x_1x_2 = 0$. Since $D$ is irreducible, we can write the local equation of $D$ as
\[
c_1x_1^\alpha + c_2 x_2^\beta + g(x_1,x_2)
\]
for some $c_1,c_2\neq 0$ and a power series $g$, such that the coefficients of $x_1^i$ and $x_2^j$ in $g$ vanish whenever $i\leq \alpha$ and $j\leq \beta$. Then the local intersection product at $x$ satisfies
\[
(D\cdot C)_x = \alpha + \beta.
\]
Let $v = \ord_{E_t}\in \text{QM}(X,C)$. Then 
\[
v(D) = v(f_t^*D) = pq.\]
Since $v(D)\leq p\alpha, q\beta$ by the definition of a quasi-monomial valuation, we have $\alpha \geq p$ and $\beta \geq q$. However,
\[
p+q = D\cdot C \geq (D\cdot C)_x = \alpha + \beta \geq p  + q,
\]
so $\alpha = q$ and  $\beta = p$. This implies that the Newton polygon of $D$ in $x_1, x_2$ coordinate is precisely the region above the line connecting $(q,0)$ and $(0,p)$, so $D$ is a $(p,q)$-unicuspidal curve which is well-formed with respect to $(C,x)$.
\end{proof}

We need the following property about the set of unicuspidal curves on $X$ well-formed with respect to $(C,x)$.

\begin{lemma}\label{lemma:accumulation_points_unicuspidal_curves}
Let $T$ denote the set of $t= \frac{q}{p}$ such that
\begin{itemize}
\item There exists a $(p,q)$-unicuspidal curve $D$ on $X$ which is well-formed with respect to $(C,x)$, 
\item $-K_X\cdot D = p+q$,
\item $D^2 = pq-1$, and
\item $E_{t}$ is a special divisor over $X$.
\end{itemize}
\item [(a)] Suppose $k=1$. Then $T$ does not have any accumulation points except possibly at $\frac{d-2\pm\sqrt{d^2-4d}}{2}$.
\item [(b)] Suppose $k\geq 2$. Then $T$ does not have any accumulation points except possibly at $0$ and $\infty$. 
\item [(c)] Suppose $k = 2$ and $C_1^2, C_2^2 > 0$. Then $T$ is finite.
\item [(d)] Suppose $k\geq 3$. Then $T = \varnothing$. 
\end{lemma}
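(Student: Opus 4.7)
The plan is to apply the Hodge index theorem with carefully chosen auxiliary divisors in each part. Two preliminary observations drive every argument. First, a genuine $(p,q)$-cusp requires $\min(p,q)\geq 2$ and hence $pq\geq 4$, so $D^2 = pq-1 \geq 3 > 0$; as an irreducible curve with positive self-intersection on a smooth surface, $D$ is automatically big and nef. Second, the well-formedness of the Newton polygon together with $-K_X\cdot D = p+q = (D\cdot C)_x$ forces $D\cap C = \{x\}$ set-theoretically, so $D\cdot C_j = 0$ for all $j\geq 3$, and in the case $k=2$ the local intersection numbers give $\{D\cdot C_1, D\cdot C_2\} = \{p,q\}$ after labeling the two branches at $x$.

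For part (d), Hodge index applied to $D$ and $C_3$ yields $0 = (D\cdot C_3)^2 \geq D^2\cdot C_3^2$, so $C_3^2 \leq 0$; the equality case would force $C_3 \equiv \lambda D$ with $\lambda = 0$, contradicting $C_3$ being non-trivial, so $C_3^2 < 0$, contradicting $C_3^2\geq 0$. Hence $T = \varnothing$. For part (c), Hodge with $H = C_1$ and $H = C_2$ (both of positive square) gives $p^2 \geq (pq-1)C_1^2$ and $q^2 \geq (pq-1)C_2^2$. Their product yields $pq/(pq-1) \geq \sqrt{C_1^2 C_2^2}$; when $C_1^2 C_2^2 \geq 2$ this bounds $pq \leq 2+\sqrt{2}$, while when $C_1^2 = C_2^2 = 1$ the individual bounds $p\leq q+1/q$ and $q\leq p+1/p$ combined with $\min(p,q)\geq 2$ and $\gcd(p,q)=1$ leave no solutions. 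Either way $T$ is finite. For part (a), Hodge with $H = -K_X$ gives $(p+q)^2 \geq d(pq-1)$, so any accumulation $t_0 = \lim q_n/p_n$ with $p_n+q_n\to\infty$ satisfies $(1+t_0)^2 \geq d\cdot t_0$, i.e., $t_0 \in \RR_{>0}\setminus I_d$. Combined with $T\subseteq I_d$ from Corollary~\ref{cor:special_divisors_all_boundaries}(a), accumulation points lie in $\partial I_d$.

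For part (b), the sub-cases $k\geq 3$, $k=2$ with both $C_i^2 > 0$, and $k=2$ with $C_1^2 = C_2^2 = 0$ (forcing $d=4$) are immediate from (d), (c), and Corollary~\ref{cor:special_divisors_all_boundaries}(b) respectively. The remaining case is $k=2$ with exactly one $C_i^2=0$, WLOG $C_1^2 = 0$ and $C_2^2 = d-4 \geq 1$, so $d\geq 5$. Here the key trick is to combine two Hodge inequalities: subtracting Hodge-with-$C_2$ from Hodge-with-$(-K_X)$ yields
\[
p^2 + 2pq = (p+q)^2 - q^2 \geq d(pq-1) - (d-4)(pq-1) = 4(pq-1),
\]
so $q \leq p/2 + 2/p$. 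Combined with the lower bound $q\geq (d-4)p - (d-4)/q$ coming from Hodge-with-$C_2$, these two bounds are incompatible for $d\geq 5$ and $p$ sufficiently large, so $T$ is finite in this case as well and (b) follows.

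The main technical obstacle is the combination trick in the mixed case of (b), where neither Hodge inequality by itself suffices; one must exploit the precise arithmetic of both bounds to force finiteness, with the threshold $d\geq 5$ appearing exactly because $d-9/2 > 0$ is what makes the two inequalities eventually contradict each other.
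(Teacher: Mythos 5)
Your preliminary reductions, part (a), part (c), part (d), and the first three sub-cases of (b) are all sound, and your treatment of (a) by passing to the limit in $(p+q)^2\geq d(pq-1)$ and intersecting with $T\subseteq I_d$ is if anything cleaner than the paper's argument (which fixes the defect $m$ in $(p+q)^2=dpq-m$ and analyzes each Pell-type equation separately). The problem is the last sub-case of (b), namely $k=2$ with exactly one $C_i^2=0$. There you ``subtract'' the inequality $q^2\geq (d-4)(pq-1)$ from $(p+q)^2\geq d(pq-1)$; since both inequalities point the same way, the conclusion $(p+q)^2-q^2\geq 4(pq-1)$ does not follow (you would need $q^2\leq (d-4)(pq-1)$, which by Hodge holds only in the degenerate case $D\equiv\lambda C_2$). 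The resulting bound $q\leq p/2+2/p$ is not just unproved but unavailable: for $C_1^2=0$, $C_2^2=1$, $d=5$, $p=2$ every Hodge-type constraint is satisfied with $q$ arbitrarily large. Consequently your claim that $T$ is finite in this sub-case is both unjustified and stronger than the lemma, whose caveat ``except possibly at $0$ and $\infty$'' exists precisely because of this sub-case; the paper only proves $\min(p,q)\leq 2$ there, which leaves $q/p$ free to escape to $0$ or $\infty$.

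The missing ingredient is the two-variable form of the Hodge index inequality, which is what the paper uses. Since $a_1C_1+a_2C_2$ is nef with positive square for $a_1,a_2>0$, one has $\bigl((a_1C_1+a_2C_2)\cdot D\bigr)^2\geq (a_1C_1+a_2C_2)^2D^2$ for all $a_1,a_2\geq 0$, i.e.\ the quadratic form
\[
a_1^2\bigl(p^2-(pq-1)C_1^2\bigr)+2a_1a_2(2-pq)+a_2^2\bigl(q^2-(pq-1)C_2^2\bigr)
\]
is nonnegative. Besides the two diagonal inequalities you already use, its discriminant yields the cross-term inequality $\bigl(p^2-(pq-1)C_1^2\bigr)\bigl(q^2-(pq-1)C_2^2\bigr)\geq (pq-2)^2$, which is genuinely stronger than the product of the single-variable Hodge inequalities. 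In the mixed case $C_1^2=0$, $C_2^2\geq 1$ it reads $p^2\bigl(q^2-(pq-1)C_2^2\bigr)\geq(pq-2)^2$, equivalently $(pq-1)C_2^2\leq 4q/p-4/p^2$, which forces $p\leq 2$ and hence confines accumulation to $\infty$ (symmetrically $q\leq 2$ and accumulation at $0$ when the roles are exchanged). That is exactly what (b) requires; finiteness is neither needed nor obtainable here.
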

\begin{proof}
(a) Since $E_t$ is special, by Proposition~\ref{cor:special_divisors_all_boundaries}, $d\geq 5$ and $t\in I_d$. This implies that $(p+q)^2 < dpq$. By the Hodge index theorem, 
\[
d(pq-1) = (C^2) (D^2) \leq (C\cdot D)^2 = (p+q)^2 < dpq.
\]
Thus, $(p+q)^2 = dpq - m$ for some integer $1\leq m\leq d$. For each fixed $m$, the set of $\frac{q}{p}$ for which $(p,q)$ is a solution of $(p+q)^2 = dpq-m$ does not have any accumulation points except possibly at $\frac{d-2\pm \sqrt{d^2-4d}}{2}$. \\

(b) We may assume that $p,q\geq 2$. Suppose $x\in C_1\cap C_2$. Since $D$ is well-formed with respect to $C_1 + C_2$, the local intersection products satisfy $(D\cdot C_1)_x \geq p$ and $(D\cdot C_2)_x \geq q.$ Since $D$ is nef and $-K_X\cdot D = p+q$, we must have $D\cdot C_1 = p$, $D\cdot C_2 = q$, $D\cdot C_j = 0$ for all $j\geq 3$. \\

\noindent\textbf{Case 1:} $k = 2$.

Since $a_1C_1 + a_2C_2$ is nef for all $a_1,a_2\geq 0$, by the Hodge index theorem,
\[
(a_1C_1 + a_2C_2)^2 (D^2)\leq ((a_1C_1 + a_2C_2) \cdot D)^2.\]
This implies that for all $a_1,a_2\geq 0$,
\[
a_1^2(p^2 - (pq-1)C_1^2) + a_2^2(q^2 - (pq-1)C_2^2) + 2a_1a_2 (2-pq) \geq 0.\]
Thus, $p^2 \geq (pq-1)C_1^2$, $q^2 \geq (pq-1)C_2^2$, and $(p^2 - (pq-1)C_1^2)(q^2 - (pq-1)C_2^2)\geq (pq-2)^2$. There are several cases:
\begin{itemize}
\item Suppose $C_1^2 = C_2^2 = 0$. Then $E_t$ is not special for all $t\in \QQ$ by Proposition~\ref{cor:special_divisors_all_boundaries}.
\item Suppose $C_1^2, C_2^2 \geq 1$. Then $p^2\geq pq-1$ and $q^2 \geq pq -1$ imply that $p=q$. This implies that $t =1$.
\item Suppose $C_1^2 = 0$ and $C_2^2>0$. Then
\[
p^2 (q^2 - (pq-1)C_2^2) \geq (pq-2)^2,
\]
which implies that
\[
(pq-1)C_2^2 \leq \frac{4q}{p} - \frac{4}{p^2}.
\]
Thus, $p\leq 2$ and $\infty$ the only possible accumulation point of the set of $\frac{q}{p}$.
\item $C_1^2 > 0$ and $C_2^2 = 0$. Similar to the previous case, $q \leq 2$ and $0$ is the only possible accumulation point of the set of $\frac{q}{p}$.
\end{itemize}

\noindent\textbf{Case 2:} $k\geq 3$.
We may assume that $C_3\cdot C_2 = 1$. Then by the Hodge index theorem, for any $a>0$,
\[
(C_1 + C_2 + aC_3)^2 (D^2) \leq ((C_1 + C_2 + aC_3)\cdot D)^2,
\]
which implies that
\[
2a D^2 \leq ((C_1+C_2)\cdot D)^2.
\]
This is a contradiction.\\

(c) By the computation in (b), we have $p^2\geq pq-1$ and $q^2\geq pq-1$. Thus, $p=q$ or $(p,q) = (1,2), (2,1)$. This implies that $T$ is finite.\\

(d) This follows from the computation in (b).
\end{proof}

\begin{proposition}\label{prop:classification_of_special_valuations_nef_case}
Let $v\in \text{QM}(X,C)$ be a quasi-monomial valuation. Assume $C_i$ is nef for all $1\leq i\leq k$.
\begin{itemize}
\item[(a)] Suppose $C$ is irreducible. Then $v$ is special over $X$ if and only if $v=v_t$ for some $t\in I_d$.
\item[(b)] Suppose $C$ is reducible. Then $v$ is special if and only if the following cases do not happen:
\begin{itemize}
\item $k = 2$ and $d = 4$.
\item $k = 2 $, $d\geq 5$, $C_1^2 = 0$, and $v = \ord_{C_2}$.
\item $k = 2 $, $d\geq 5$, $C_2^2 = 0$, and $v = \ord_{C_1}$.
\end{itemize}
\end{itemize}
\end{proposition}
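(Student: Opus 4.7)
My plan splits $v\in \text{QM}(X,C)$ into divisorial vertex valuations ($\ord_C$ in (a), $\ord_{C_i}$ in (b)) versus interior valuations $v_t$ for $t\in(0,\infty)$, and the latter is then split into rational and irrational $t$.

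For the divisorial vertices in case (b) I would apply Proposition~\ref{prop: special_divisor_dP_surface_criterion}. The witness $B$ must be supported on $\sum_{j\neq i}C_j$, so when $k\geq 3$ the choice $B=\sum_{j\neq i}C_j$ works since each $C_j$ meets its two cycle-neighbors with intersection $1$ and has $C_j^2\geq 0$. When $k=2$ the only candidate is $B=aC_{3-i}$, giving $B\cdot C_{3-i}=aC_{3-i}^2$, so $\ord_{C_i}$ is special iff $C_{3-i}^2>0$; combined with $C_1^2+C_2^2=d-4$ this recovers exactly the two vertex exceptions and, via $C_1^2=C_2^2=0$, the $d=4$ exception. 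For $\ord_C$ in case (a), I would use Theorem~\ref{thm:special_divisor_criterion}(5) directly: on any model $\mu:Y\to X$ extracting the strict transform $\tilde C$, the relation $K_Y+\tilde C+E_Y\sim 0$ forces $-(K_Y+\tilde C+D_Y)\sim E_Y-D_Y$, which cannot be ample for any effective $D_Y$ since the exceptional locus $E_Y$ contains curves of negative self-intersection.

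For interior $v_t$ with $t$ rational the proposition reduces to Proposition~\ref{cor:special_divisors_all_boundaries}. For $t$ irrational in the expected range, I would apply Proposition~\ref{prop: special_valuation_dP_surface_criterion} by producing an open interval $I\ni t$ and weights $a_i>0$ making $\sum_i a_iC_{i,s}$ ample for every $s\in I\cap\QQ$. Lemma~\ref{lemma: positivity_of_intersection_product} together with Remark~\ref{remark:intersection_product_is_strictly_positive_continuous_function} supplies an interval on which all intersection numbers $(\sum_i a_iC_{i,s})\cdot C_{j,s}$ are strictly positive, so the sum is big and nef throughout $I$. By Lemma~\ref{lemma:big_and_nef_and_not_ample_equivalent_to_existence_of_unicuspidal_curves}, failure of ampleness at a rational $s=q/p$ produces a $(p,q)$-unicuspidal curve, and Lemma~\ref{lemma:accumulation_points_unicuspidal_curves} shows the corresponding exceptional set $T$ is discrete in the interior of the allowed range; since $t$ is irrational I can shrink $I$ to avoid $T$ entirely, and Proposition~\ref{prop: special_valuation_dP_surface_criterion} then yields specialness of $v_t$.

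For the converse at irrational $t$ outside the expected range, I would use Theorem~\ref{thm:rational_space_containing_special_valuation_is_special}: specialness of $v_t$ implies finite generation of $\gr_{v_t}^\bullet R$, and a neighborhood of $v_t$ in the minimal rational vector space containing it (which is two-dimensional, spanned by the two branch coweights at the node) has constant graded ring and hence constant klt-Fano Proj. This propagates specialness to nearby rational $v_s$, forcing $s$ into the allowed interval via Proposition~\ref{cor:special_divisors_all_boundaries}; letting $s\to t$ gives $t$ in the closure, and the open boundary is excluded because at such $t$ the relevant quadratic $C_t^2$ (in case (a)) or an analogous combination of intersection numbers (in case (b)) vanishes, which prevents the Proj from being a klt Fano variety. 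The main obstacle I expect is exactly this converse step for irrational $t$, since specialness is defined through existence of a qdlt Fano model rather than a direct intersection-theoretic inequality, and one must both propagate it through the minimal rational vector space and rule out the boundary values with care.
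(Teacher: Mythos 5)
Your proposal is correct and follows the same architecture as the paper's proof: divisorial vertices via Proposition~\ref{prop: special_divisor_dP_surface_criterion} (your explicit witnesses $B=\sum_{j\neq i}C_j$ for $k\geq 3$ and $B=aC_{3-i}$ for $k=2$ are exactly what the criterion requires), and interior $v_t$ via Lemma~\ref{lemma: positivity_of_intersection_product}, Lemma~\ref{lemma:big_and_nef_and_not_ample_equivalent_to_existence_of_unicuspidal_curves}, Lemma~\ref{lemma:accumulation_points_unicuspidal_curves} and Proposition~\ref{prop: special_valuation_dP_surface_criterion}. Two points where you add something the paper's written proof leaves implicit, and which you handle essentially correctly: the split into rational versus irrational $t$ (sending rational $t$ directly to Proposition~\ref{cor:special_divisors_all_boundaries} avoids the issue that at a rational $t_0\in T$ one cannot shrink the interval to make $\sum a_iC_{i,t_0}$ ample), and the converse for irrational $t$ outside the allowed range via Theorem~\ref{thm:rational_space_containing_special_valuation_is_special} — the paper uses exactly this propagation trick later, in Corollary~\ref{cor:equivalence_of_special_valuations_after_contraction}. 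Two soft spots to tighten. First, for $\ord_C$ in case (a): the relation is $K_Y+\tilde C+\sum a_iE_i\sim 0$ with $a_i\leq 1$ (not all $a_i=1$), and the right reason $-(K_Y+\tilde C+D_Y)\sim\sum a_iE_i-D_Y$ fails to be ample is that a divisor bounded above by an effective exceptional divisor is never big, not merely that exceptional curves have negative self-intersection. Second, your exclusion of $t\in\partial I_d$ via ``$C_t^2=0$ prevents the Proj from being klt Fano'' is unjustified as stated ($C_t^2$ is not the anticanonical volume of the degeneration); but it is also unnecessary, since the neighborhood $U$ from Theorem~\ref{thm:rational_space_containing_special_valuation_is_special} is open in the full two-dimensional rational span and therefore already contains rational $s\notin I_d$ whenever $t\in\partial I_d$, giving the contradiction directly.
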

\begin{proof}
Suppose first that $v \neq \ord_{C_i}$ for any $1\leq i\leq k$. It suffices to consider the case $v = v_{t_0}$ for some $t_0 > 0$. By Lemma~\ref{lemma: positivity_of_intersection_product}, there exists an open interval $I$ containing $t_0$ and $a_1,\ldots, a_k>0$, such that $\sum_i a_iC_{i,t}$ is big and nef for all $t\in I\cap \QQ$. Then, by Lemma~\ref{lemma:accumulation_points_unicuspidal_curves} and Lemma~\ref{lemma:big_and_nef_and_not_ample_equivalent_to_existence_of_unicuspidal_curves}, we may shrink $I$ around $t_0$ such that $\sum_i a_iC_{i,t}$ is ample for all $t\in I\cap \QQ$. By Proposition~\ref{prop: special_valuation_dP_surface_criterion}, $v$ is special.

Next, suppose $v = \ord_{C_i}$ for some $i$. We have the following cases:
\begin{itemize}
\item Suppose $C$ is irreducible. Then $\ord_C$ is not special by Proposition~\ref{prop: special_divisor_dP_surface_criterion}.
\item Suppose $k = 2$. If $C_1^2 = 0$, then $\ord_{C_2}$ is not special by Proposition~\ref{prop: special_divisor_dP_surface_criterion}. Similarly, if $C_2^2 = 0$, then $\ord_{C_1}$ is not special.
\item Suppose $k\geq 3$. Then $\ord_{C_i}$ is special for all $1\leq i\leq k$ by Proposition~\ref{prop: special_divisor_dP_surface_criterion}. 
\end{itemize}
The proof is complete.
\end{proof}

\subsection{Special valuations over $X$, Part II}
The goal of this subsection is to classify valuations in $\text{QM}(X,C)$ which are special over $X$, in the case that some component of $C$ is not nef. This is done by comparing special valuations over $X$ with special valuations over $X'$, where $X'$ is obtained from $X$ after contracting a non-nef component of $C$. The set-up is the following.\\

\noindent\textbf{Set-up.} Let $X$ be a smooth del Pezzo surface. Let $C\sim -K_X$ be a normal crossing divisor on $X$. Let $C_0, C_1,\ldots, C_k$ be irreducible components of $C$ with $k\geq 1$. Suppose that $C_0^2 = -1$ and $\pi: X\to X'$ is the birational morphism which contracts $C_0$. Let $x\in C$ be a node. Denote $C' = \pi(C)$ and $x' = \pi(x).$ For $t>0$, let $v_t$ denote the quasi-monomial valuation centered at $x$ with weight $(1,t)$ along the two branches of $C$. For $t\in \QQ$, let $E_t$ denote the divisor over $X$ corresponding to the divisorial valuation $v_t$, and let $Y_t\to X$ be the birational morphism extracting $E_t$. Let $Y_t\to Y_{t}'$ be the birational morphism contracting the strict transform of $C_{0}$ on $Y_t$. Let $C_{i,t}$ and $C_{i,t}'$ denote the strict transform of $C_i$ on $Y_t$ and $Y_{t}'$, respectively.

\begin{lemma}\label{lemma:equivalence_of_positivity_of_intersection_product_after_contraction}
Suppose that $I$ is an open interval such that $v_t$ is not proportional to the divisorial valuation $\ord_{C_0}$ for any $t\in I$. Let $t_0\in I$. Then the following are equivalent:
\begin{itemize}
\item[(a)] There exists $a_0,\ldots, a_k > 0$, $\delta_1 > 0$, and $I_1\subseteq I$ containing $t_0$, such that for any $t\in I_1\cap \QQ$, $(\sum_{i=0}^k a_i C_{i,t})\cdot C_{j,t} > \delta_1$ for all $0\leq j\leq k$.
\item[(b)] There exists $b_1,\ldots, b_k >0$, $\delta_2 >0$, and $I_2\subseteq I$ containing $t_0$, such that for any $t\in I_2\cap \QQ$, $(\sum_{i=1}^k b_i C_{i,t}')\cdot C_{j,t}' > \delta_2$ for all $1\leq j\leq k$.
\end{itemize}
\end{lemma}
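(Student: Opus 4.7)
The plan is to use the projection formula for the birational morphism $\pi_t\colon Y_t\to Y_t'$ that contracts the strict transform $C_{0,t}$. The hypothesis that no $v_t$ is proportional to $\ord_{C_0}$ ensures this contraction is well-defined uniformly for $t$ in a neighborhood of $t_0$ and that $C_{0,t}^2<0$ there; a direct toric computation gives $C_{0,t}^2=C_0^2=-1$ if $x\notin C_0$, and $C_{0,t}^2=-1-s(t)$ with $s(t)\in\{t,1/t\}$ if $x\in C_0$. The central identity I would exploit is
\[
\pi_t^{*}C_{j,t}'=C_{j,t}+e_{j,t}C_{0,t},\qquad e_{j,t}:=-\frac{C_{j,t}\cdot C_{0,t}}{C_{0,t}^2}\geq 0,
\]
valid for all $1\leq j\leq k$.

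For the implication (a) $\Rightarrow$ (b), I would set $b_i=a_i$ for $i\geq 1$ and expand, via projection formula,
\[
\Big(\sum_{i=1}^k b_iC_{i,t}'\Big)\cdot C_{j,t}'=\Big(\sum_{i=1}^k a_iC_{i,t}\Big)\cdot(C_{j,t}+e_{j,t}C_{0,t}).
\]
Adding and subtracting the $i=0$ terms inside each sum, the cross piece $a_0(C_{0,t}\cdot C_{j,t}+e_{j,t}C_{0,t}^2)$ vanishes by the defining property of $e_{j,t}$, so the expression simplifies to
\[
\Big(\sum_{i=0}^k a_iC_{i,t}\Big)\cdot C_{j,t}+e_{j,t}\Big(\sum_{i=0}^k a_iC_{i,t}\Big)\cdot C_{0,t}>\delta_1,
\]
since each factor on the right is nonnegative and the first summand already exceeds $\delta_1$. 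Thus $\delta_2=\delta_1$ and $I_2=I_1$ work.

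For (b) $\Rightarrow$ (a), I would set $B_t:=\pi_t^{*}\bigl(\sum_{i\geq 1}b_iC_{i,t}'\bigr)+\epsilon C_{j_0,t}$ for small $\epsilon>0$, where $j_0\in\{1,\ldots,k\}$ is a neighbor of $C_0$ in the cycle $C$ chosen so that $C_{0,t}\cdot C_{j_0,t}\geq 1$ on a subinterval of $I_2$. The existence of $j_0$ is the key combinatorial observation: when $k=1$ the unique neighbor $C_1$ satisfies $C_{0,t}\cdot C_{1,t}=C_0\cdot C_1-1=1$ (the weighted blow-up at $x$ destroys only one of the two intersection points of $C_0$ and $C_1$), while for $k\geq 2$ one selects the neighbor of $C_0$ whose intersection with $C_0$ does not contain $x$. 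Writing $\pi_t^{*}(\sum_{i\geq 1} b_iC_{i,t}')=\sum_{i\geq 1}b_iC_{i,t}+c_tC_{0,t}$, the coefficient $c_t$ is strictly positive because $\sum_i b_iC_{i,t}\cdot C_{0,t}\geq b_{j_0}>0$ and $C_{0,t}^2<0$; hence $a_0=c_t>0$, $a_{j_0}=b_{j_0}+\epsilon>0$, and $a_i=b_i>0$ for the remaining indices.

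The intersection checks $B_t\cdot C_{j,t}>0$ are immediate for $j\geq 1$, $j\neq j_0$, and $B_t\cdot C_{0,t}=\epsilon\, C_{j_0,t}\cdot C_{0,t}\geq \epsilon$. The only real obstacle is the value $B_t\cdot C_{j_0,t}=B_t'\cdot C_{j_0,t}'+\epsilon\, C_{j_0,t}^2$, whose correction term may be negative: to handle this uniformly, I would shrink $I_1$ to a compact neighborhood of $t_0$ on which $C_{j_0,t}^2$ is bounded (the self-intersection under the weighted blow-up is continuous in $t$, cf.\ Remark~\ref{remark:intersection_product_is_strictly_positive_continuous_function}) and then choose $\epsilon$ small enough to absorb the bound. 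This produces a uniform lower bound $\delta_1>0$ for all relevant intersection numbers and completes the argument.
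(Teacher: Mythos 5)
Your overall route is the same as the paper's: decompose $\pi_t^{*}C_{j,t}'$ as $C_{j,t}+e_{j,t}C_{0,t}$ and push the positivity back and forth through the projection formula. Your (a) $\Rightarrow$ (b) direction is correct (and in fact supplies the computation the paper leaves implicit): the cross term $a_0\bigl(C_{0,t}\cdot C_{j,t}+e_{j,t}C_{0,t}^2\bigr)$ does vanish by the definition of $e_{j,t}$, and the two remaining summands are $>\delta_1$ and $\geq 0$ respectively because (a) is assumed for $j=0$ as well.

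The gap is in (b) $\Rightarrow$ (a): the coefficient you assign to $C_{0,t}$ is $a_0=c_t$, where $c_t$ is the coefficient of $C_{0,t}$ in $\pi_t^{*}\bigl(\sum_{i\geq 1}b_iC_{i,t}'\bigr)$, and this quantity genuinely depends on $t$ (it equals $-(C_{0,t}^2)^{-1}\sum_i b_i\,(C_{i,t}\cdot C_{0,t})$, and both $C_{0,t}^2$ and $C_{i,t}\cdot C_{0,t}$ vary with $t$ when $x\in C_0$). Statement (a) quantifies the constants $a_0,\ldots,a_k$ \emph{before} the quantifier over $t\in I_1\cap\QQ$, and this uniformity is essential for the application (Proposition~\ref{prop: special_valuation_dP_surface_criterion} needs a single divisor $\sum a_iC_i$ on $X$ whose strict transforms are positive for all $t$ in an interval). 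So as written your construction does not prove (a). The repair is the same continuity device you already invoke for $C_{j_0,t}^2$: the function $t\mapsto c_t$ extends continuously to $I_2$ and is bounded below by some $\delta>0$ near $t_0$, so after shrinking the interval one may replace $c_t$ by a fixed constant $a_0$ with $|a_0-c_t|$ small; the resulting perturbation $(a_0-c_t)C_{0,t}$ changes each intersection number by an amount bounded by $|a_0-c_t|$ times a bounded quantity, which is absorbed into a slightly smaller $\delta_1$. This is precisely how the paper's proof handles it (there the perturbation is $-\epsilon C_{0,t}$ rather than your $+\epsilon C_{j_0,t}$; either works). With that one correction your argument is complete.
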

\begin{proof}
We first show that (b) implies (a). Let $\pi_t$ denote the morphism $Y_t\to Y_t'$. For $t\in I_2\cap\QQ$, write
\[
b_0(t)C_{0,t}+\sum_{i=1}^k b_iC_{i,t} =\pi_t^*\left(\sum_{i=1}^k b_iC_{i,t}'\right),
\]
where
\[
b_0(t) = -(C_{0,t}^2)^{-1}\sum_{i=1}^k b_i (C_{i,t}\cdot C_{0,t}).\]
Hence, $b_0$ extends to a continuous function $b_0: I_2\to \RR$. Let $I_1\subseteq I_2$ be a subinterval containing $t_0$ such that $b_0(t)$ is bounded below by an absolute constant $\delta$ when $t\in I_1$. We may assume $\delta < \min \{1, \delta_2\}$.
Then for any $\epsilon \in (\frac{1}{3}\delta, \frac{2}{3}\delta)$ and $t\in I_1$, 
\[
\left((b_0(t) - \epsilon)C_{0,t} + \sum_{i=1}^k b_i C_{i,t}\right) \cdot C_{j,t} = \left(\sum_{i=1}^k b_iC_{i,t}'\right)\cdot C_{j,t}' - \epsilon(t)C_{0,t}\cdot C_{j,t} \geq \delta_2 - \epsilon > \frac{1}{3}\delta
\]
for all $1\leq j\leq k$ and
\[
\left((b_0(t) - \epsilon)C_{0,t} + \sum_{i=1}^k b_i C_{i,t}\right) \cdot C_{0,t} = -\epsilon C_{0,t}^2 > \frac{1}{3}\delta
\]
since $C_{0,t}^2 \leq C_0^2 = -1$.
Because $b_0(t)$ is a continuous function, after replacing $I_1$ by a smaller open neighborhood of $t_0$, there exists a constant $a_0$ such that
\[
b_0(t) - \frac{2}{3}\delta < a_0< b_0(t) - \frac{1}{3}\delta\]
for all $t\in I_1$. Then, statement (a) is satisfied by taking $a_0, a_1 = b_1, \ldots, a_k = b_k$, $\delta_1 = \frac{\delta}{3}$, and $I_1$.

Finally, assume (a) holds. Then (b) holds by taking $b_1 = a_1,\ldots,b_k = a_k$, $\delta_2 = \delta_1$, and $I_2 = I_1$.
\end{proof}

\begin{lemma}\label{lemma:equivalence_of_unicuspidal_curves_after_contraction}
Suppose $D$ is a $(p,q)$-unicuspidal curve on $X$ which is well-formed with respect to $(C,x)$, such that $-K_X\cdot D = p+q$ and $D^2 = pq-1$. Let $D' = \pi(D)$.
\begin{itemize}
\item[(a)] Suppose $x\not\in C_0$. Then $D'$ is a $(p,q)$-unicuspidal curve on $X'$ which is well-formed with respect to $(C',x')$. Furthermore, $-K_{X'}\cdot D' = p+q$ and $D'^2 = pq-1$.
\item[(b)] Suppose $x\in C_0$ and $D\cdot C_0 = p$. Then $D'$ is a $(p,p+q)$-unicuspidal curve on $X'$ which is well-formed with respect to $(C',x')$. Furthermore, $-K_{X'}\cdot D' = 2p+q$ and $D'^2 = p(p+q)-1$.
\item[(c)] Suppose $x\in C_0$ and $D\cdot C_0 = q$. Then $D'$ is a $(q,p+q)$-unicuspidal curve on $X'$ which is well-formed with respect to $(C',x')$. Furthermore, $-K_{X'}\cdot D' = p+2q$ and $D'^2 = q(p+q)-1$.
\end{itemize}
\end{lemma}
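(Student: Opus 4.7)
Since $C_0$ is a smooth $(-1)$-curve on the smooth surface $X$, by Castelnuovo's theorem $\pi: X \to X'$ is the blow-up of $X'$ at a smooth point $x' := \pi(C_0)$. The plan is to exploit this explicit local description together with the projection formula, using the canonical bundle formula $\pi^*(-K_{X'}) = -K_X + C_0$.

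For part (a), the first step is to show $D \cdot C_0 = 0$. Let $C_i, C_j$ be the two components of $C$ meeting at $x$; by hypothesis $x \notin C_0$, so neither equals $C_0$. Well-formedness of $D$ with respect to $(C, x)$ forces the local intersections $(D \cdot C_i)_x$ and $(D \cdot C_j)_x$ to be $p$ and $q$ in some order, summing to $p + q$. This already saturates the total $D \cdot C = -K_X \cdot D = p + q$, so since every $D \cdot C_\ell$ is nonnegative, $D \cdot C_0 = 0$. Then $\pi$ is an analytic isomorphism in a neighborhood of $x$, so the local structure of $(D', C', x')$ is identical to that of $(D, C, x)$, which gives the well-formedness claim. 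The numerical invariants follow from $\pi^* D' = D$ and the projection formula.

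For parts (b) and (c), I would choose local analytic coordinates $u_1, u_2$ at $x'$ and work in the blow-up chart $u_1 = s,\ u_2 = s t$ centered at $x = (0, 0)$. Then $C_0 = (s = 0)$, and the second branch of $C$ at $x$ has local equation $t = 0$, arising as the strict transform of the smooth curve $(u_2 = 0) \subseteq X'$, one branch $C_b'$ of $C'$ at $x'$. The other branch of $C'$ at $x'$ is the image of the component of $C$ meeting $C_0$ at its opposite node; its tangent direction corresponds to the opposite chart of the blow-up, so it has local equation $u_1 = 0$. Hence $x'$ is a node of $C'$ with the two coordinate axes as branches. In case (b), the assumption $D \cdot C_0 = p$ gives $\pi^* D' = D + p C_0$, so the local equation $g$ of $D'$ at $x'$ satisfies $g(s, st) = s^p f(s, t)$, where $f(s, t)$ is the local equation of $D$ with Newton polygon having vertices $(q, 0)$ and $(0, p)$. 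Substituting and tracking monomials, the Newton polygon of $g$ has vertices $(p + q, 0)$ and $(0, p)$, with all monomials satisfying $p \cdot i + (p + q) \cdot j \geq p(p + q)$ --- exactly the condition for $D'$ to be $(p, p+q)$-unicuspidal well-formed with respect to $(C', x')$. The numerical invariants then follow from the projection formula: $-K_{X'} \cdot D' = (-K_X + C_0) \cdot D = (p + q) + p = 2p + q$, and $D'^2 = (\pi^* D')^2 = (D + p C_0)^2 = (pq - 1) + 2 p^2 - p^2 = p(p + q) - 1$. Case (c) follows by the same argument with the roles of $p$ and $q$ exchanged.

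The main technical step is the Newton polygon computation in cases (b) and (c). One needs to verify both that the leading coefficients of $g$ at $(p + q, 0)$ and $(0, p)$ descend from the nonvanishing of the corresponding leading coefficients of $f$, and that no other monomial of $g$ lies strictly below the claimed lower boundary. This reduces to a direct combinatorial check on how the substitution $(i, j) \mapsto (i + j - p,\ j)$ on exponents acts on the support of $g$.
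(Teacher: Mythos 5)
Your proof is correct and follows essentially the same route as the paper: part (a) by observing that $(D\cdot C)_x = p+q = D\cdot C$ forces $D\cap C_0=\varnothing$ so that $\pi$ is an isomorphism near $D$, and parts (b),(c) via $\pi^*D' = D + pC_0$ (resp.\ $qC_0$) together with the projection formula. Your explicit blow-up chart computation of the Newton polygon (the exponent map $(i,j)\mapsto(i+j-p,j)$ carrying the condition $pi+(p+q)j\ge p(p+q)$ to $pa+qb\ge pq$) correctly fills in the step the paper dismisses as ``the explicit description of a blow-up.''
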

\begin{proof}
(a) Since the local intersection product satisfies $(\sum_{i=1}^k C_i \cdot D)_x = p+q$, $D\cap C_0 = \varnothing$. As a result, $\pi$ is an isomorphism in a neighborhood of $D$ and $D'$. This implies all the statements in (a).\\

(b) Note that $\pi^*D' = D + pC_0$. Then
\[
-K_{X'}\cdot D' = -K_X \cdot \pi^*D' = -K_X\cdot (D+pC_0) = 2p+q,
\]
and
\[
D'^2 =  (D+pC_0)^2 = p(p+q) -1.
\]
The fact that $D'$ is a $(p,p+q)$-unicuspidal curve on $X'$ well-formed with respect to $(C',x')$ follows from the explicit description of a blow-up.\\

(c) The proof is exactly the same as (b).
\end{proof}

\begin{lemma}\label{lemma:equivalence_of_special_valuations_after_contraction}
Let $t_0 \not\in \QQ$. If $v_{t_0}$ is a special valuation over $X'$, then
\begin{itemize}
\item [(a)] There exists $b_1,\ldots, b_k >0$, $\delta >0$, and an open interval $I$ containing $t_0$, such that for any $t\in I\cap \QQ$, $(\sum_{i=1}^k b_i C_{i,t}')\cdot C_{j,t}' > \delta$ for all $1\leq j\leq k$.
\item [(b)] $v_{t_0}$ is a special valuation over $X$.
\end{itemize}
\end{lemma}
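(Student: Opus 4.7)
The overall strategy is to bootstrap from $X'$ to $X$. For part (a), I will use that irrationality of $t_0$ forces the minimal rational vector space containing $v_{t_0}$ to have dimension two, so Theorem~\ref{thm:rational_space_containing_special_valuation_is_special} will spread the specialness of $v_{t_0}$ over $X'$ to every nearby $v_t$, in particular to the rational ones. Once rational $v_t = \ord_{E_t}$ are special divisorial valuations over $X'$, Lemma~\ref{lemma: positivity_of_intersection_product} (applied on $X'$ in place of $X$) will give the uniform lower bound required in (a). For part (b), I will transfer this positivity to $Y_t$ via Lemma~\ref{lemma:equivalence_of_positivity_of_intersection_product_after_contraction}, upgrade bigness/nefness to ampleness by excluding the obstructing unicuspidal curves (Lemmas \ref{lemma:big_and_nef_and_not_ample_equivalent_to_existence_of_unicuspidal_curves}, \ref{lemma:equivalence_of_unicuspidal_curves_after_contraction}, and \ref{lemma:accumulation_points_unicuspidal_curves}), and finally invoke Proposition~\ref{prop: special_valuation_dP_surface_criterion}.

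For (a), since $t_0 \notin \QQ$ the valuation $v_{t_0}$ lies in the relative interior of the two-dimensional face of $\text{QM}(X',C')$ cut out by the two branches of $C'$ at the node corresponding to $x'$, so the minimal rational vector subspace containing $v_{t_0}$ is precisely this two-dimensional face. Because $v_{t_0}$ is special over $X'$, the associated graded ring $\gr_{v_{t_0}}^\bullet R_{X'}$ is finitely generated, and Theorem~\ref{thm:rational_space_containing_special_valuation_is_special} furnishes an open neighborhood $U \ni v_{t_0}$ on which the graded ring is constant; in particular every $v_t$ with $t$ in some open interval $I \ni t_0$ is special over $X'$. For each rational $t \in I\cap \QQ$, this rational specialness is precisely specialness of the divisor $E_t$ over $X'$, so Lemma~\ref{lemma: positivity_of_intersection_product} applied on $X'$ yields the constants $b_1,\ldots,b_k>0$, the subinterval, and the uniform lower bound $\delta>0$ demanded by (a). (Here I may need to shrink $I$ so that the hypothesis that every $C_i'$ is nef holds on the relevant pieces, using that contracting $C_0$ preserves the degeneracy type we are in.)

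For (b), apply the implication (b)$\Rightarrow$(a) of Lemma~\ref{lemma:equivalence_of_positivity_of_intersection_product_after_contraction} to the data produced in (a): this gives coefficients $a_0,a_1,\ldots,a_k>0$ and a subinterval $I_1 \ni t_0$ such that $\bigl(\sum_{i=0}^k a_i C_{i,t}\bigr)\cdot C_{j,t}>0$ uniformly for all $t\in I_1\cap\QQ$ and $0\le j\le k$. In particular $B_t := \sum_i a_i C_{i,t}$ is big and nef on $Y_t$. If $B_t$ fails to be ample for some rational $t=q/p\in I_1\cap\QQ$, Lemma~\ref{lemma:big_and_nef_and_not_ample_equivalent_to_existence_of_unicuspidal_curves} produces a $(p,q)$-unicuspidal curve $D$ on $X$ well-formed with respect to $(C,x)$, with $-K_X\cdot D = p+q$ and $D^2 = pq-1$; Lemma~\ref{lemma:equivalence_of_unicuspidal_curves_after_contraction} then transports it to a unicuspidal curve $D'=\pi(D)$ on $X'$ of the analogous shape, contributing a rational point to the corresponding accumulation set for $X'$. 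Lemma~\ref{lemma:accumulation_points_unicuspidal_curves}, applied on $X'$, shows this set has no accumulation point in the interior of $I_1$ (the only allowed accumulation is at the boundary of the relevant $I_{d'}$, or at $0,\infty$), so after shrinking $I_1$ to a smaller open interval $I_1' \ni t_0$ we can force $B_t$ to be ample for every $t\in I_1'\cap\QQ$. Proposition~\ref{prop: special_valuation_dP_surface_criterion} then yields that $v_{t_0}$ is special over $X$.

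The main obstacle I anticipate is the bookkeeping between unicuspidal curves on $X$ and $X'$: under $\pi$ the weights $(p,q)$ can jump to $(p,p+q)$ or $(q,p+q)$ depending on whether and how $D$ meets $C_0$ (Lemma~\ref{lemma:equivalence_of_unicuspidal_curves_after_contraction}), so one must verify carefully that the rational values $t$ at which ampleness of $B_t$ on $Y_t$ can fail correspond, under this weight change, to rational values $t'$ controlled by the accumulation analysis on $X'$. The fact that $t_0$ is irrational is used both here (to keep $t_0$ away from the finitely many problematic rationals produced by such curves) and in part (a) (to ensure the rational vector space in Theorem~\ref{thm:rational_space_containing_special_valuation_is_special} is two-dimensional); these two uses together are what allow the bootstrap from $X'$ to $X$ to succeed.
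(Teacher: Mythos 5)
Your part (b) is essentially the paper's argument: transfer the positivity from $X'$ to $Y_t$ via Lemma~\ref{lemma:equivalence_of_positivity_of_intersection_product_after_contraction}, identify the obstruction to ampleness with unicuspidal curves (Lemma~\ref{lemma:big_and_nef_and_not_ample_equivalent_to_existence_of_unicuspidal_curves}), push those curves down to $X'$ (Lemma~\ref{lemma:equivalence_of_unicuspidal_curves_after_contraction}), and rule out accumulation at $t_0$ (Lemma~\ref{lemma:accumulation_points_unicuspidal_curves}), concluding with Proposition~\ref{prop: special_valuation_dP_surface_criterion}. The weight-change bookkeeping you flag ($(p,q)\mapsto(p,p+q)$ or $(q,p+q)$) is exactly what the paper handles by passing to a subsequence and reinterpreting $v_{t_0}$ as the weight-$(1,1+t_0)$ (resp.\ $(1+t_0^{-1},\,\cdot\,)$) valuation on $X'$, so its accumulation analysis applies at the shifted limit. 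For part (a) you take a genuinely different route: the paper does not invoke Theorem~\ref{thm:rational_space_containing_special_valuation_is_special}, but instead reads off from the classification (Proposition~\ref{prop:classification_of_special_valuations_nef_case} together with Corollary~\ref{cor:special_divisors_all_boundaries}) that all nearby rational $E_t$ are special over $X'$, and then quotes Lemma~\ref{lemma: positivity_of_intersection_product} and Remark~\ref{remark:intersection_product_is_strictly_positive_continuous_function} (the latter is what actually supplies the uniform $\delta$; Lemma~\ref{lemma: positivity_of_intersection_product} alone only gives pointwise positivity). Your route via Theorem~\ref{thm:rational_space_containing_special_valuation_is_special} works, but note that it silently uses the converse implication ``$\gr_{v_t}^\bullet R'$ finitely generated with klt Fano $\Proj$ for a divisorial $v_t$ $\Rightarrow$ $E_t$ is a special divisor over $X'$,'' which the paper's explicit classification lets it avoid.

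There is one genuine gap: the reduction to the case where every component of $C'$ is nef. Lemma~\ref{lemma: positivity_of_intersection_product}, Lemma~\ref{lemma:accumulation_points_unicuspidal_curves} and Proposition~\ref{prop:classification_of_special_valuations_nef_case} are all stated under the hypothesis that all components of the boundary are nef, and contracting the single $(-1)$-component $C_0$ does not guarantee that $C'$ satisfies this. Your parenthetical fix --- shrinking the interval $I$ --- does nothing to address this, since nefness of the $C_i'$ is a property of $X'$ independent of $t$. The paper resolves it by induction on the number of non-nef components of $C'$: one keeps contracting down to a model $X''$ with all components nef, establishes (a) there, and lifts the positivity back up one contraction at a time using the (b)$\Rightarrow$(a) direction of Lemma~\ref{lemma:equivalence_of_positivity_of_intersection_product_after_contraction}; likewise the unicuspidal-curve accumulation argument must be run after iterating Lemma~\ref{lemma:equivalence_of_unicuspidal_curves_after_contraction} down to $X''$. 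This is routine but needs to be said; as written your plan only covers the case where $C'$ already has all components nef.
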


\begin{proof}
We first prove this proposition assuming that every component of $C'$ is nef. Then (a) follows from the proof of Proposition~\ref{prop:classification_of_special_valuations_nef_case} and Remark~\ref{remark:intersection_product_is_strictly_positive_continuous_function}. For (b), by Lemma~\ref{lemma:equivalence_of_positivity_of_intersection_product_after_contraction}, Lemma~\ref{lemma:big_and_nef_and_not_ample_equivalent_to_existence_of_unicuspidal_curves}, and Proposition~\ref{prop: special_valuation_dP_surface_criterion}, it suffices to show that there does not exists an infinite sequence $\{\frac{q_n}{p_n}:n\geq 1\}$ approaching $t_0$, such that for all $n\geq 1$, there exists a $(p_n,q_n)$-unicuspidal rational curve on $X$ which is well-formed with respect to $(C,x)$.

Assume that such a sequence $\{\frac{q_n}{p_n}:n\geq 1\}$ exists. By Lemma~\ref{lemma:equivalence_of_unicuspidal_curves_after_contraction}, after passing to a subsequence, we may assume that one of the following holds:
\begin{itemize}
\item[(i)] For every $n\geq 1$, there exists a $(p_n, q_n)$-unicuspidal curve on $X'$ well-formed with respect to $(C',x')$.
\item[(ii)] For every $n\geq 1$, there exists a $(p_n, p_n+q_n)$-unicuspidal curve on $X'$ well-formed with respect to $(C',x')$.
\item[(iii)] For every $n\geq 1$, there exists a $(q_n,p_n+ q_n)$-unicuspidal curve on $X'$ well-formed with respect to $(C',x')$.
\end{itemize}
Without loss of generality, we assume case (ii) holds (the other two cases are similar). Then the valuation $v_{t_0}$ is a quasi-monomial valuation on $X'$ with weight $(1,1+t_0)$ centered at $x'$ along the two branches of $C'$. However, the fact that $\lim_{n\to\infty}\frac{p_n+q_n}{p_n} = 1+t_0$ contradicts with Lemma~\ref{lemma:accumulation_points_unicuspidal_curves} and Proposition~\ref{prop:classification_of_special_valuations_nef_case} combined. This finishes the proof of (b).

Finally, the general case follows from an induction on the number of components $C_i'$ which is not nef.
\end{proof}

\begin{corollary}\label{cor:equivalence_of_special_valuations_after_contraction}
Let $X$ be a smooth del Pezzo surface of degree $d$. Let $C\sim -K_X$ be a normal crossing divisor on $X$. Suppose $X\to X''$ is the birational morphism which satisfies the following conditions:
\begin{itemize}
\item $X\to X''$ only contracts irreducible components of $C$.
\item Every component of $C''$ is nef, where $C''$ is the image of $C$ on $X''$.
\end{itemize}
Let $v\in \text{QM}(X,C) \cong \text{QM}(X'',C'')$. Then $v$ is a special valuation over $X$ if and only if $v$ is a special valuation over $X''$.
\end{corollary}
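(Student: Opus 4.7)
The plan is to prove the corollary by induction on the number of irreducible components of $C$ contracted by $X\to X''$. The base case, where no components are contracted, is trivial since $X=X''$. For the inductive step, factor $X\to X''$ as $X\xrightarrow{\pi} X'\to X''$ where $\pi$ contracts a single irreducible component $C_0\subseteq C$. Because $X$ is smooth and $\pi$ only contracts a component of the anticanonical divisor, $C_0$ must be a $(-1)$-curve. The nefness hypothesis on $C''$ is inherited by $C'$ in the sense needed for the inductive hypothesis applied to $X'\to X''$, and after this application the problem reduces to the single-step claim: a valuation $v\in \text{QM}(X,C)\cong \text{QM}(X',C')$ is special over $X$ if and only if $v$ is special over $X'$.

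For the direction \emph{special over $X$ implies special over $X'$}, the argument is a direct composition of qdlt Fano type models. If $\mu:(Y,E)\to X$ is a qdlt Fano type model with $v\in \text{QM}(Y,E)$ witnessing the specialness of $v$ over $X$, then $\pi\circ\mu:(Y,E)\to X'$ is again a projective birational morphism, and the qdlt and ampleness conditions depend only on the triple $(Y,E,D)$ and not on the target of the morphism. Hence $\pi\circ\mu$ is a qdlt Fano type model of $X'$ containing $v$, so $v$ is special over $X'$.

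For the reverse direction \emph{special over $X'$ implies special over $X$}, I would split into cases according to the structure of $v$. Every $v\in \text{QM}(X,C)$ is either proportional to $\ord_{C_i}$ for some irreducible component $C_i$ of $C$, or of the form $v_t$ for some $t>0$ centered at a node of $C$. In the former case, and in the case $v=v_t$ with $t\in \QQ_{>0}$ (where $v$ is proportional to the divisorial valuation $\ord_{E_t}$ over $X$), I apply Lemma~\ref{lemma:equivalence_of_special_divisors_after_contracting_-1_curves} directly, since that lemma gives a full equivalence for divisorial valuations. In the remaining case $v=v_t$ with $t\notin \QQ$, the desired implication is exactly the content of Lemma~\ref{lemma:equivalence_of_special_valuations_after_contraction}(b).

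The main technical obstacle is keeping track of the identification $\text{QM}(X,C)\cong \text{QM}(X',C')$ so that the correct single-step lemma can be invoked on the correct valuation. When the node $x$ underlying $v_t$ is disjoint from $C_0$, the identification is immediate; when $x\in C_0$, the two branches of $C$ at $x$ must be followed through the contraction, so that the valuation $v_t$ on $X$ corresponds to a valuation $v_{t'}$ on $X'$ centered at $\pi(x)$ with an appropriately transformed weight $t'$, and the rationality or irrationality of the weight is preserved under this change of coordinates. Once this combinatorial bookkeeping is verified, the three cases above exhaust all quasi-monomial valuations in $\text{QM}(X,C)$ and complete the proof.
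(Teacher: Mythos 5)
Your proof is correct, and on the harder implication (special over $X''$ implies special over $X$) it follows the paper's route: divisorial valuations (both $\ord_{C_i}$ and $v_t$ with $t\in\QQ$) are handled by Lemma~\ref{lemma:equivalence_of_special_divisors_after_contracting_-1_curves}, and irrational $v_t$ by Lemma~\ref{lemma:equivalence_of_special_valuations_after_contraction}(b). Making the induction on single $(-1)$-curve contractions explicit is fine (the paper runs an analogous induction inside the proof of Lemma~\ref{lemma:equivalence_of_special_valuations_after_contraction}), and the bookkeeping you worry about at nodes lying on the contracted curve is harmless: both lemmas are stated for the valuation as an element of the valuation space of the common function field, and divisoriality is intrinsic to the valuation, so your three cases are exhaustive and invoked on the right objects.

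Where you genuinely diverge is the easy implication (special over $X$ implies special over $X''$). You compose a qdlt Fano type model $\mu:(Y,E)\to X$ with $X\to X''$ and observe that the defining conditions --- $(Y,E+D)$ qdlt, $\lfloor E+D\rfloor=E$, $-(K_Y+E+D)$ ample --- are intrinsic to $(Y,E,D)$, so $(Y,E)\to X''$ is again a qdlt Fano type model. Under the paper's definition of a special quasi-monomial valuation this is immediate and valid, and it is shorter than the paper's argument, which instead uses Theorem~\ref{thm:rational_space_containing_special_valuation_is_special} to make all nearby divisorial valuations special over $X$, transfers them to $X''$ via Lemma~\ref{lemma:equivalence_of_special_divisors_after_contracting_-1_curves}, and then concludes by the classification in Proposition~\ref{prop:classification_of_special_valuations_nef_case}. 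The trade-off is that your argument yields only the bare equivalence, whereas the paper's detour through the numerical criterion simultaneously produces on $X''$ the positivity data (coefficients $a_i$ with $\bigl(\sum_i a_iC''_{i,t}\bigr)\cdot C''_{j,t}>0$ on an interval) that is reused in Lemma~\ref{lemma:equivalence_of_special_valuations_after_contraction}(a) and in the proof of Theorem~\ref{mainthm:structure_of_the_space_of_special_valuations}. It is worth stating explicitly in your write-up that the composition argument leans entirely on the paper's choice to define specialness by the existence of a model dominating $X$: descending along a birational contraction is then tautological, while ascending is the substantive direction.
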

\begin{proof}
If $v$ is a divisorial valuation, then we are done by Lemma~\ref{lemma:equivalence_of_special_divisors_after_contracting_-1_curves}. Thus, we may assume that $v$ is not divisorial.

By Lemma~\ref{lemma:equivalence_of_special_valuations_after_contraction}, if $v$ is a special over $X''$, then $v$ is special over $X$. If $v$ is special over $X$, then by Theorem~\ref{thm:rational_space_containing_special_valuation_is_special}, there exists an open neighborhood $I$ of $v$ in $\text{QM}(X,C)$ in which every divisorial valuation is special over $X$. By Lemma~\ref{lemma:equivalence_of_special_divisors_after_contracting_-1_curves}, every divisorial valuation in $I$ is also special over $X'$. Finally, $v$ is special by the classification in Proposition~\ref{prop:classification_of_special_valuations_nef_case}.
\end{proof}

\subsection{Proof of Theorem~\ref{mainthm:classification_of_special_valuations_on_del_pezzo_surfaces} and \ref{mainthm:structure_of_the_space_of_special_valuations}}
\begin{proof}[Proof of Theorem~\ref{mainthm:classification_of_special_valuations_on_del_pezzo_surfaces}]
Part (a),(b), and (c) follows from Propostition~\ref{prop:classification_of_special_valuations_nef_case}. Part (d) follows from Corollary~\ref{cor:equivalence_of_special_valuations_after_contraction}.
\end{proof}

\begin{proof}[Proof of Theorem~\ref{mainthm:structure_of_the_space_of_special_valuations}]
Part (a) follows from Theorem~\ref{mainthm:classification_of_special_valuations_on_del_pezzo_surfaces}. 

For (b), let $\phi: X\to X'$ be the morphism which only contracts components of $C$ and every component of $C' = \phi(C)$ is nef. Then $\mathcal{D}^{KV}(X,C) = \mathcal{D}(X,C)$ implies that $\mathcal{D}^{KV}(X',C') = \mathcal{D}(X',C')$. Then, by Proposition~\ref{prop:classification_of_special_valuations_nef_case}, $C'$ has at least 3 irreducible components, or $C'$ has two irreducible components $C_1,C_2$ such that $C_1^2, C_2^2 > 0$. Then, by Lemma~\ref{lemma:accumulation_points_unicuspidal_curves}, the set $T$ of $\frac{q}{p}$, such that there exists a $(p,q)$-unicuspidal rational curve on $X$ well-formed with respect to $(C, x)$ for some node $x$ of $C$, is finite.
We partition $\mathcal{D}^{KV}(X',C')\sim S^1$ by finitely many open intervals, such that each boundary point of these open intervals is 
\begin{itemize}
\item either $\ord_{C_i'}$, where $C_i'$ is an irreducible component of $C'$,
\item or a divisorial valuation which corresponds to a $(p,q)$ weighted blow-up of $X'$ along two branches of $C'$ at a node $x'$ of $C'$, such that the weights $p$ and $q$ satisfy $\frac{q}{p}\in T$.
\end{itemize}
Then, the statement follows from Lemma~\ref{prop: special_valuation_dP_surface_criterion}, Lemma~\ref{lemma: positivity_of_intersection_product}, and Lemma~\ref{lemma:big_and_nef_and_not_ample_equivalent_to_existence_of_unicuspidal_curves}.

For (c), the proof is similar to (b), except that $T$ may not be finite. By Lemma~\ref{lemma:accumulation_points_unicuspidal_curves}, the accumulation points of $T$ are contained in the set of boundaries of $\mathcal{D}^{KV}(X',C')$ 
roposition~\ref{prop: special_valuation_dP_surface_criterion}. This shows that the set of boundaries of these partitioning intervals $I_k$ do not have accumulation points except possibly at the boundaries of $\mathcal{D}^{KV}(X',C')$.

Part (d) follows from Lemma~\ref{lemma:big_and_nef_and_not_ample_equivalent_to_existence_of_unicuspidal_curves}.
\end{proof}

\section{Qdlt Fano Type Models of Special Degenerations}
\subsection{Notations and outline of the proof}
Throughout this section, we will consider Construction~\ref{construction: special_degeneration} in the following set-up.\\

\noindent\textbf{Set-up.} Let $X$ be a klt Fano surface and $C\sim_{\QQ} -K_X$ be a $\QQ$-divisor such that $(X,C)$ is log canonical. Assume $(X,C)$ is a normal crossing pair in a neighborhood of $x\in X$. Assume $E$ is a divisor on $X$ such that
\begin{itemize}
\item $E$ is a log canonical place of $(X,C)$,
\item The center of $E$ on $X$ is $x$,
\item $E$ is special over $X$.
\end{itemize}
Under these assumptions, $E$ is the exceptional divisor of a weighted blow-up of $X$ at $x$ with weights $p,q$ along the two branches of $C$, for some coprime positive integers $p$ and $q$.\\

Let 
\[
X\times \AA^1 \leftarrow \mathcal{Y}  \dashrightarrow \mathcal{X}'\dashrightarrow \mathcal{X}^m \to \mathcal{X}
\]
be the diagram in Construction~\ref{construction: special_degeneration} for the pair $(X,C)$ and its log canonical place $E$. When restricting to the central fibers, we obtain the following diagram
\[
X \leftarrow X_1\cup S_1 \dashrightarrow  S_2\dashrightarrow S_3\to X',
\]
where $S_2, S_3, X'$ are the central fibers of $\mathcal{X}', \mathcal{X}^m, \mathcal{X}$, respectively.
\begin{lemma}\label{lemma:Y_is_1,p,q_weighted_blow_up} 
We may take $\mathcal{Y}\to X\times \AA^1$ to be the $(1,p,q)$-weighted blow-up at $x\times \{0\}$ along the normal crossing divisor $X\times \{0\} + C$ (with weight 1 along $X\times \{0\}$ and weights $p,q$ along the two branches of $C$). Then $S_1\cong \PP(1,p,q)$. Let $u_0,u_1,u_2$ be the homogeneous coordinates on $S_1$ of degree $1,p,q$, respectively. Then the strict transform of $C\times \AA^1$ on $\mathcal{Y}$, when restricted to $S_1$, is the divisor $(u_1u_2) = 0$.
\end{lemma}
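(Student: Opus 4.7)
The plan is to identify the divisorial valuation $(\ord_E, \ord_s)$ on $X\times\AA^1$ with the monomial valuation that is extracted by the $(1,p,q)$-weighted blow-up of $X\times\AA^1$ at $x\times\{0\}$ along the snc divisor $X\times\{0\}+C$, and then invoke Lemma~\ref{lemma:weighted_blow_up_nc_divisor} to read off $S_1$ and the restriction of the strict transform of $C\times\AA^1$.

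First I would set up local analytic coordinates. Because $(X,C)$ is an snc pair at $x$ and $E$ is (by the assumption stated just before the lemma) the exceptional divisor of the $(p,q)$-weighted blow-up of $X$ at $x$ along the two branches of $C$, I can choose formal coordinates $(x_1,x_2)$ at $x$ with $C=(x_1x_2=0)$ locally and $\ord_E$ equal to the monomial valuation in $(x_1,x_2)$ with weights $(p,q)$. Taking $s$ to be the standard coordinate on $\AA^1$, the triple $(s,x_1,x_2)$ gives formal coordinates at $x\times\{0\}$ in $X\times\AA^1$ in which the divisor $X\times\{0\}+C$ has equation $s\cdot x_1\cdot x_2=0$.

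Next I would verify, directly from the definition, that $(\ord_E,\ord_s)$ equals the monomial valuation in $(s,x_1,x_2)$ with weights $(1,p,q)$: expanding an element of $\widehat{\OO}_{X\times\AA^1,\,x\times\{0\}}$ as a power series in $s$ with coefficients in $\widehat{\OO}_{X,x}$ and applying the definition of the product valuation $(\ord_E,\ord_s)$ reduces to the monomial formula $\ord(s^a x_1^b x_2^c)=a+bp+cq$. Consequently, by Definition 2.3, the $(1,p,q)$-weighted blow-up of $X\times\AA^1$ at $x\times\{0\}$ along $X\times\{0\}+C$ is a projective birational model extracting exactly this valuation, so it is a valid choice of $\mathcal{Y}$ in Construction~\ref{construction: special_degeneration}. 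With $\mathcal{Y}$ so realized, Lemma~\ref{lemma:weighted_blow_up_nc_divisor}(a) immediately gives $S_1\cong \PP(1,p,q)$ with homogeneous coordinates $u_0,u_1,u_2$ of degrees $1,p,q$, and Lemma~\ref{lemma:weighted_blow_up_nc_divisor}(b) identifies the strict transform of each component of $X\times\{0\}+C\times\AA^1$ on $S_1$ with the corresponding torus-invariant coordinate hyperplane. In particular the two branches of $C\times\AA^1$, locally $(x_1=0)$ and $(x_2=0)$, restrict to $(u_1=0)$ and $(u_2=0)$, so the full strict transform of $C\times\AA^1$ cuts out $(u_1u_2=0)$ on $S_1$.

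There is no serious obstacle in this argument; the content is essentially bookkeeping against Definition 2.3 and Lemma~\ref{lemma:weighted_blow_up_nc_divisor}. The only subtlety worth being explicit about is the passage between the formal/analytic local picture (where the weights and the snc equation $sx_1x_2=0$ are meaningful) and the algebraic variety $\mathcal{Y}$ itself; this is handled by the $\mm$-primary versus $\hat{\mm}$-primary ideal correspondence built into Definition 2.3 and by the Cartesian diagram of Lemma~\ref{lemma:weighted_blow_up_nc_divisor}, which ensure that the analytic computation descends to the global picture.
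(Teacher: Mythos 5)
Your argument is correct and matches the paper's proof, which is exactly the one-line observation that the lemma follows from Construction~\ref{construction: special_degeneration} together with Lemma~\ref{lemma:weighted_blow_up_nc_divisor}; you have simply filled in the (routine) verification that $(\ord_E,\ord_s)$ is the monomial valuation with weights $(1,p,q)$ in the coordinates $(s,x_1,x_2)$, so that the weighted blow-up is a valid choice of $\mathcal{Y}$.
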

\begin{proof}
This follows from Construction~\ref{construction: special_degeneration} and Lemma~\ref{lemma:weighted_blow_up_nc_divisor}.
\end{proof}

\begin{definition}\label{def:notations_of_construction_of_special_degeneration}
{\em
For the rest of this section, we will follow the notations defined below. Notations of divisors are guided by the following principles: (1) mathcal letters (including the decorated ones) are only used to denote a family of surfaces over $\AA^1$, (2) for a divisor $D$ on $Z$ and a birational model $Z'$ of $Z$, we usually use $D_{Z'}$ to denote the strict transform of $D$ on $Z'$.
\begin{itemize}
\item As in Lemma~\ref{lemma:Y_is_1,p,q_weighted_blow_up}, we take $\mathcal{Y}\to X\times \AA^1$ is the $(1,p,q)$-weighted blow-up at $x\times \{0\}$ along the normal crossing divisor $X\times \{0\} + C$. We denote the homogeneous coordinates of $S_1\cong \PP(1,p,q)$ as $u_0,u_1,u_2$. 
\item For each $i=0,1,2$, define $L_i = (u_i = 0) \subseteq S_1$.
\item For each birational model $S$ of $S_1$ and $i=0,1,2$, define $L_{i, S}$ to be the strict transform of $L_i$ on $S$.
\item For each birational model $\mathcal{Z}$ of $X\times \AA^1$, define $\mathcal{C}_{\mathcal{Z}}$ to be the strict transform of $C\times \AA^1$ on $\mathcal{Z}$. If $Z$ is the central fiber of $\mathcal{Z}$, define $C_{Z}$ to be the restriction of $\mathcal{C}_{\mathcal{Z}}$ to $Z$. 
\item Define $x_1\in \mathcal{Y}$ to be the intersection of the closure of $\{x\}\times (\AA^1-\{0\})$ in $\mathcal{Y}$ and the central fiber of $\mathcal{Y}$. We similarly define $x_2\in \mathcal{X}'$ and $x_3\in \mathcal{X}^m$.
\end{itemize}
}
\end{definition}

The main goal of this section is to construct the following commutative diagram
\begin{equation}\label{equation:construction_of_special_degeneration}
\begin{tikzcd}
    \Tilde{X} \times \AA^1 \arrow{d}& \arrow{l}\Tilde{\mathcal{Y}}   \arrow{d}{} \arrow[dashed]{r}&  \Tilde{\mathcal{X}'}\arrow{d}{} \arrow[dashed]{r}& \Tilde{\mathcal{X}}^m \arrow{d}\\
     X\times \AA^1 & \arrow{l} \mathcal{Y}   \arrow[dashed]{r}  &  \mathcal{X}' \arrow[dashed]{r}&  \mathcal{X}^m \arrow{r}& \mathcal{X}
\end{tikzcd}
\end{equation}
where
\begin{itemize}
\item The bottom row is Construction~\ref{construction: special_degeneration} for the pair $(X,C)$ and the log canonical place $E$.
\item Each vertical arrow is a birational morphism with a unique exceptional divisor. This exceptional divisor is either $E\times \AA^1$ (for the first vertical arrow) or the strict transform of $E\times \AA^1$ (for the other vertical arrows).
\item The top row can be constructed by MMPs which are similar to Construction~\ref{construction: special_degeneration}.
\end{itemize}
We denote the central fibers of \ref{equation:construction_of_special_degeneration} by the following diagram
\begin{equation}\label{equation: construction_of_special_degeneration_central_fiber}
\begin{tikzcd}
    \Tilde{X} \arrow{d}& \arrow{l} \Tilde{X}_1 \cup \Tilde{S}_1   \arrow{d}{} \arrow[dashed]{r}& \Tilde{S}_2\arrow{d}{} \arrow[dashed]{r}& \Tilde{S}_3 \arrow{d}\\
     X & \arrow{l} X_1 \cup S_1   \arrow[dashed]{r}  &  S_2 \arrow[dashed]{r}&  S_3 \arrow{r}& X'
\end{tikzcd}
\end{equation}

We now state the main theorem of this section.
\begin{theorem}\label{thm:qdlt_Fano_type_model_on_special_degeneration}
There exists commutative diagrams ~(\ref{equation:construction_of_special_degeneration}) and (\ref{equation: construction_of_special_degeneration_central_fiber}) such that the following statement holds:
\begin{itemize}
\item [(a)]  The bottom rows of (\ref{equation:construction_of_special_degeneration}) and (\ref{equation: construction_of_special_degeneration_central_fiber}) are the same as Construction~\ref{construction: special_degeneration} for the pair $(X,C)$ and the log canonical place $E$.
\item [(b)] Each vertical arrow of (\ref{equation:construction_of_special_degeneration}) is a birational morphism with a unique exceptional divisor. This exceptional divisor is either $E\times \AA^1$ (for the first vertical arrow) or the strict transform of $E\times \AA^1$ (for the other vertical arrows).
\item [(c)] Each vertical arrow of (\ref{equation: construction_of_special_degeneration_central_fiber}) is a birational morphism with a unique exceptional divisor. This exceptional divisor is either $E$ (for the first vertical arrow) or the strict transform of $E$ (for the other vertical arrows).
\item [(d)] In (\ref{equation:construction_of_special_degeneration}), the birational maps $\mathcal{Y}\dashrightarrow \mathcal{X}'$ and $\mathcal{X}'\dashrightarrow \mathcal{X}^m$ are isomorphisms in a neighborhood of the closure of $\{x\}\times (\AA^1\setminus \{0\})$.
\item [(e)] $(S_3, C_{S_3})$ is a log canonical log Calabi-Yau pair which is simple normal crossing in a neighborhood of $x_3\in S_3$.
\item [(f)] For $i=1,2$, $(\Tilde{S}_3, L_{i, \Tilde{S}_3} + E_{\Tilde{S}_3}) \to X'$ is a qdlt Fano type model.
\end{itemize}
\end{theorem}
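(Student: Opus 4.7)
The plan is to lift Construction~\ref{construction: special_degeneration} via a parallel $\GG_m$-equivariant MMP on birational models where $E \times \AA^1$ has been extracted. Since $E$ is special over $X$, Theorem~\ref{thm:special_divisor_criterion}(5) furnishes an effective $\QQ$-divisor $D_{\Tilde{X}}$ on $\Tilde{X}$ (the $(p,q)$-weighted blowup of $X$ extracting $E$) such that $(\Tilde{X}, E + D_{\Tilde{X}})$ is plt with $-(K_{\Tilde{X}} + E + D_{\Tilde{X}})$ ample. I will use $D_{\Tilde{X}}$ as the complement guiding the lifted MMPs.

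For the first step, I would define $\Tilde{\mathcal{Y}} \to \mathcal{Y}$ as the blowup along the closure of $\{x\} \times (\AA^1 \setminus \{0\})$, a smooth curve meeting the central fiber only at $x_1$. Local charts in the $(1,p,q)$-weighted blowup show that this curve is the center of $\ord_{E \times \AA^1}$ on $\mathcal{Y}$, so the appropriate $(p,q)$-weighted blowup along the two branches of $\mathcal{C}_{\mathcal{Y}}$ extracts the strict transform of $E \times \AA^1$. Equivalently, $\Tilde{\mathcal{Y}}$ is obtained from $\Tilde{X} \times \AA^1$ by blowing up the curve $E \times \{0\}$, realizing the top-left square of (\ref{equation:construction_of_special_degeneration}). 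On central fibers, $\Tilde{S}_1$ is then the $(p,q)$-weighted blowup of $S_1 = \PP(1,p,q)$ at $x_1 = L_1 \cap L_2$, with exceptional curve $E_{\Tilde{S}_1}$. For Steps (2) and (3), I would run the $\GG_m$-equivariant MMPs on $\Tilde{\mathcal{Y}}$ with the divisor $K_{\Tilde{\mathcal{Y}}} + \mathcal{E}_{\Tilde{\mathcal{Y}}} + \mathcal{D}_{\Tilde{\mathcal{Y}}} + \Tilde{X}_1 + (1-\epsilon)\Tilde{S}_1 \sim_{\QQ} -\epsilon\Tilde{S}_1$ and an analogous $-K$-MMP, where $\mathcal{E}_{\Tilde{\mathcal{Y}}}$ is the strict transform of $E \times \AA^1$ and $\mathcal{D}_{\Tilde{\mathcal{Y}}}$ is the closure of $D_{\Tilde{X}} \times (\AA^1 \setminus \{0\})$.

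Properties (a), (b), (c) follow by design. For (d), the MMP of Step (2) contracts only the strict transforms of $X_1$ and $\Tilde{X}_1$, both of which are disjoint from $x_1 \in S_1 \setminus X_1$; consequently, both the original and lifted birational maps are isomorphisms in a neighborhood of the closure of $\{x\} \times (\AA^1 \setminus \{0\})$, and the two rows are compatible there. For (e), log canonicity of $(S_3, C_{S_3})$ is inherited from the weakly special test configuration structure, the log Calabi--Yau condition follows from $K_X + C \sim 0$ and adjunction along $\mathcal{X}^m$, and the simple normal crossing property near $x_3$ is a consequence of (d) combined with the snc structure of $(X, C)$ near $x$.

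For (f), the key is to produce the required complement on $\Tilde{S}_3$. I would restrict the lifted boundary $\mathcal{D}_{\Tilde{\mathcal{X}}^m}$ (the strict transform of $\mathcal{D}_{\Tilde{\mathcal{Y}}}$ through the MMPs) to $\Tilde{S}_3$, obtaining an effective divisor $D_{\Tilde{S}_3}$. For each $i \in \{1, 2\}$, setting the complement as $D_{\Tilde{S}_3} + (1-\delta)L_{j,\Tilde{S}_3}$ for $j \neq i$ and $\delta$ sufficiently small gives the floor $L_{i,\Tilde{S}_3} + E_{\Tilde{S}_3}$, and the qdlt condition holds locally at the zero-dimensional strata thanks to the snc property inherited from (d). The ampleness of $-(K_{\Tilde{S}_3} + L_{i,\Tilde{S}_3} + E_{\Tilde{S}_3} + D_{\Tilde{S}_3} + (1-\delta)L_{j,\Tilde{S}_3})$ over $X'$ should follow from the ampleness of $-(K_{\Tilde{X}} + E + D_{\Tilde{X}})$ via the flat degeneration. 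The main obstacle is precisely verifying this last ampleness together with the qdlt condition globally on $\Tilde{S}_3$: this requires careful tracking of how $D_{\Tilde{X}}$ transforms through the $-K$-MMP of Step (3), along with explicit toric analysis at the quotient singularities introduced by the weighted blowups (in particular at the images of $x_1^{\Tilde{X}}, x_2^{\Tilde{X}} \in \Tilde{X}$).
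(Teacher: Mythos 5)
Your overall architecture for parts (a)--(c) matches the paper's: one extracts $E$ first (the paper blows up the \emph{scheme-theoretic} exceptional divisor $E_\phi\times\{0\}$ inside $\Tilde X\times\AA^1$, which is the form needed for the universal property of blow-ups to produce the morphism $\Tilde{\mathcal Y}\to\mathcal Y$; blowing up the reduced curve $E\times\{0\}$ is ambiguous on the singular surface $\Tilde X$) and then runs $\GG_m$-equivariant MMPs parallel to Construction~\ref{construction: special_degeneration}. However, your justification of (d) only covers Step (2). The dangerous step is the $-K$-MMP of Step (3): since $-K_{\mathcal X'}\sim_{\QQ,\AA^1}\mathcal C_{\mathcal X'}$, each step of that MMP contracts components of $C_{S_2}$, and $L_{1,S_2}$ and $L_{2,S_2}$ are precisely such components passing through $x_2$. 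You must rule out their contraction, and this is where the hypothesis that $E$ is \emph{special} (not merely weakly special) enters: the paper chooses a complement $\Delta'\sim_\QQ -K_X$ with $E$ an lc place and $\Delta'\geq\epsilon G$ for a general ample $G$, deduces that $x_2$ is an lc center of the corresponding pair so that $\mathcal G_{\mathcal X'}$ avoids $x_2$, hence $-K_{\mathcal X'}\cdot L_{i,S_2}=\mathcal G_{\mathcal X'}\cdot L_{i,S_2}\geq 0$, and inducts through the MMP. Without this, (d) and hence (e) are unproven.

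The second and larger gap is (f), which you have essentially flagged yourself as unresolved. Restricting the degeneration of the plt complement $D_{\Tilde X}$ to the central fiber $\Tilde S_3$ does not give the required positivity: ampleness of $-(K_{\Tilde X}+E+D_{\Tilde X})$ is an open, not closed, condition in a flat family, so it can genuinely fail on the central fiber, and the support of the limit divisor need not have the correct configuration relative to $L_{i,\Tilde S_3}+E_{\Tilde S_3}$. The paper's proof of (f) does not degenerate a complement at all; it constructs the complement directly on the central fiber by toric intersection theory. Concretely, $\Tilde S_1$ is the toric surface with rays $(0,1),(1,0),(p,q),(-p,-q)$, so $L_{1,\Tilde S_1}^2=0$ and $L_{1,\Tilde S_1}$ induces a fibration; one pushes $g_1^*L_{1,\Tilde S_1}+\epsilon(L_{0}+M)$ forward to $\Tilde S_2$ and then to $\Tilde S_3$, shows the result $D_{\Tilde S_3}$ is big and nef with support exactly $C_{\Tilde S_3}-L_{2,\Tilde S_3}-E_{\Tilde S_3}$, and checks that any irreducible curve with $D_{\Tilde S_3}\cdot F=0$ avoids the point $L_{2,\Tilde S_3}\cap E_{\Tilde S_3}$ (the only candidate would be $L_{2,\Tilde S_3}$ itself, excluded because $C_{\Tilde S_3}$ is a circle of rational curves). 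This analysis, together with Lemma~\ref{lemma:G_m_invariant_divisors_on_P(a,b,c)} identifying which exceptional divisors of $S_{12}\to S_1$ lie in $\supp(C_{S_{12}})$, is the substantive content of the theorem and is missing from your proposal.
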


\subsection{Construction of diagram (\ref{equation:construction_of_special_degeneration})}
The goal of this section is to construct the commutative diagram $(\ref{equation:construction_of_special_degeneration})$ and prove parts (a)-(e) of Theorem~\ref{thm:qdlt_Fano_type_model_on_special_degeneration}.

\begin{lemma}\label{lemma:factorize_MMP_into_flips_and_divisorial_contraction}
The rational map $S_1\dashrightarrow S_2$ can be factorized as $S_1\leftarrow S_{12} \rightarrow S_2$, such that the following properties hold.
\begin{itemize}
\item[(a)] For every exceptional divisor $F$ of $S_{12}\to S_1$, the center of $F$ on $S_1$ is contained in $L_0 = (u_0=0)\subseteq S_1$. Furthermore, if the center of $F$ on $S_1\cong \PP(1,p,q)$ is $[0,1,0]$ or $[0,0,1]$, then $F$ is torus-invariant (with respect to the standard torus action on $\PP(1,p,q)$).
\item[(b)] $S_{12} \to S_2$ is the divisorial contraction of the strict transform of $L_0$.
\end{itemize}
\end{lemma}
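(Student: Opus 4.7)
My strategy is to construct $S_{12}$ from an intermediate stage of the MMP $\mathcal{Y}\dashrightarrow \mathcal{X}'$, then read off (a) and (b) from the $\GG_m$-equivariant structure of this MMP.

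By Construction~\ref{construction: special_degeneration}, the only divisor contracted during $\mathcal{Y}\dashrightarrow \mathcal{X}'$ is $X_1$ and the resulting central fiber $S_2 = \mathcal{X}'_0$ is irreducible. I would factor the MMP as $\mathcal{Y}\dashrightarrow \mathcal{Z}\to \mathcal{X}'$, where $\mathcal{Y}\dashrightarrow \mathcal{Z}$ is a sequence of small $\GG_m$-equivariant modifications (flips/flops) and $\mathcal{Z}\to \mathcal{X}'$ is the single divisorial contraction of the strict transform $\widetilde{X}_1$ of $X_1$. The central fiber $\mathcal{Z}_0$ then has the two components $\widetilde{X}_1\cup \widetilde{S}_1$, and I would set $S_{12} := \widetilde{S}_1$, taking a further $\GG_m$-equivariant resolution only if needed to make $S_1\dashrightarrow \widetilde{S}_1$ into an honest morphism. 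The morphism $S_{12}\to S_2$ is induced by $\mathcal{Z}\to \mathcal{X}'$.

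For (a), the MMP $\mathcal{Y}\dashrightarrow \mathcal{X}'$ is an isomorphism on $\mathcal{Y}\setminus X_1$, so on central fibers the rational map $S_1\dashrightarrow S_{12}$ is an isomorphism on $S_1\setminus L_0$, forcing every exceptional divisor of $S_{12}\to S_1$ to have center on $L_0$. For the torus-invariance claim at the two points $[0,1,0]$ and $[0,0,1]$: these are the cyclic quotient singularities of $\mathcal{Y}$ arising from the $(1,p,q)$-weighted blow-up, and in an analytic neighborhood of each, $\mathcal{Y}$ is toric under a $3$-dimensional torus $T$ whose restriction to $S_1\cong \PP(1,p,q)$ is the standard torus action. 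The test configuration's $\GG_m$ sits as a $1$-parameter subgroup of $T$; a local analysis of $\GG_m$-invariant extremal rays at the singular points shows that every divisor extracted there must be toric, hence $T$-invariant in the desired sense.

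For (b), the restriction $\widetilde{S}_1\to S_2$ of the divisorial contraction $\mathcal{Z}\to \mathcal{X}'$ is a birational morphism whose unique exceptional curve is $\widetilde{S}_1\cap \widetilde{X}_1$, since $\widetilde{X}_1$ is the sole exceptional divisor of the contraction. Because small modifications are isomorphisms in codimension one, $\widetilde{S}_1\cap \widetilde{X}_1$ is the strict transform of $S_1\cap X_1=L_0$. In the favorable case $S_1\cong \widetilde{S}_1$ the map $S_{12}\to S_2$ is directly the contraction of the strict transform of $L_0$; otherwise the intervening resolution is arranged so that its exceptional divisors also lie over $L_0$, and the combined map $S_{12}\to S_2$ contracts only the strict transform of $L_0$ to the image of $\widetilde{X}_1$ in $S_2$.

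The main obstacle is the torus-invariance statement in (a): $\GG_m$-equivariance under a $1$-dimensional subgroup does not automatically upgrade to $T$-invariance. I would handle this by a local computation at each singular point, enumerating the $\GG_m$-invariant extremal rays in the local toric fan and verifying that each one is in fact toric, using that the $\GG_m$ inherited from the test configuration acts generically enough on the local toric coordinates. A secondary technical concern is making sure that, when a resolution step is needed in the construction of $S_{12}$, it does not introduce extra exceptional divisors for $S_{12}\to S_2$ beyond the strict transform of $L_0$; these potential extra exceptional divisors also lie over $[0,1,0]$ and $[0,0,1]$ and can be controlled by the same toric argument.
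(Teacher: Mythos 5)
Your factorization is the same as the paper's: decompose the $-S_1$-MMP $\mathcal{Y}\dashrightarrow\mathcal{X}'$ into flips whose flipping curves lie in (the strict transform of) $X_1$, followed by the divisorial contraction of $X_1$, and take $S_{12}$ to be the strict transform of $S_1$ on the penultimate model. Your derivations of the first half of (a) and of (b) match the paper's. Two remarks. First, your hedge about possibly needing a further resolution to get a morphism $S_{12}\to S_1$ is both unnecessary and dangerous: unnecessary because each flip contracts a curve inside $X_1$ (meeting $S_1$ only along $L_0$) and extracts a curve lying over a point of $S_1$, so the strict transform of $S_1$ on each intermediate model already maps birationally and \emph{morphically} down to $S_1$; dangerous because any genuine extra blow-up would add exceptional divisors to $S_{12}\to S_2$ and destroy the precise form of (b), and your claim that such divisors "can be controlled by the same toric argument" does not address that.

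Second, and more importantly, the torus-invariance claim in (a) — which you correctly identify as the main obstacle — is left as a sketch, and your proposed route ("enumerating $\GG_m$-invariant extremal rays in the local toric fan") is not quite the right formulation: the flipped curves are not a priori rays of any toric fan, so there is nothing to enumerate. What must be proved is that \emph{every} $\GG_m$-invariant divisorial valuation on $\PP(1,p,q)$ centered at $[0,1,0]$ or $[0,0,1]$ is a monomial (torus-invariant) valuation, where $\GG_m$ acts on $(u_0,u_1,u_2)$ with weights $(-1,0,0)$. The paper isolates this as a separate lemma and proves it as follows: decompose any local function $f=\sum_\alpha u_0^\alpha f_\alpha(u_1^{-1},u_2)$ into $\GG_m$-weight pieces; invariance of $v=\ord_F$ under the $\GG_m$-action plus Gaussian elimination over several $t$ gives $v(f)=\min_\alpha v(u_0^\alpha f_\alpha)$; and since $v(u_1)=0$ while $v(u_2)>0$ at the center $[0,1,0]$, the monomials appearing in each homogeneous $f_\alpha$ have pairwise distinct $v$-values, so no cancellation can lower $v(f_\alpha)$ below the minimum over its monomials. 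Hence $v$ is determined by its values on monomials, i.e., is torus-invariant. Your proposal is structurally correct but incomplete without this argument (or an equivalent one).
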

\begin{proof}
Suppose the MMP in Construction~\ref{construction: special_degeneration}.(2) has the form
\[
\mathcal{Y} = \mathcal{Y}_0 \dashrightarrow \mathcal{Y}_1\dashrightarrow \cdots \dashrightarrow \mathcal{Y}_n = \mathcal{X}'.
\] Since this is a $-S_1$-MMP, the last step $\mathcal{Y}_{n-1}\to \mathcal{Y}_n$ is a divisorial contraction which contracts the strict transform of $X_1$. For $i< n$, $\mathcal{Y}_{i-1}\dashrightarrow \mathcal{Y}_{i}$ is a flip which contracts a curve in $X_1$ and extracts a curve over $S_1$. Let $S_{12}$ be the component of the central fiber of $\mathcal{Y}_{n-1}$ which dominates $S_2$. Then $S_{12}\to S_2$ contracts the strict transform of $L_0$ and $S_{12}\to S_1$ is a birational morphism.

Let $F$ be an exceptional divisor of $S_{12}\to S_1$. Then $F$ is a $\GG_m$-equivariant divisor over $S_1\cong \PP(1,p,q)$, where $\GG_m$ acts on the coordinates $u_0, u_1, u_2$ of $\PP(1,p,q)$ with weights $-1,0,0$. The center of $F$ is contained in $L_0$ because $F$ is a $-S_1\sim_{\QQ,\AA^1} X_1$-positive curve and $X_1\cap S_1 = L_0$. The last part of (a) follows from Lemma~\ref{lemma:G_m_invariant_divisors_on_P(a,b,c)} below.
\end{proof}

\begin{lemma}\label{lemma:G_m_invariant_divisors_on_P(a,b,c)}
Let $X = \PP(a,b,c)$ be a weighted projective space with coordinates $u_0, u_1, u_2$. Suppose $\GG_m$ acts on $X$ by acting on the coordinates $u_0, u_1, u_2$ with weights $-1, 0, 0$, respectively. Suppose that $F$ is a $\GG_m$-invariant exceptional divisor centered at the point $[0,1,0]$ or $[0,0,1]$. Then $F$ is invariant with respect to the standard torus action on $X$.
\end{lemma}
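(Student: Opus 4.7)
The plan is to reduce the statement to a claim about divisorial valuations on $\AA^2$. I work in the affine chart $U_1 = \{u_1 \neq 0\} \subseteq \PP(a,b,c)$ around $[0,1,0]$; the case $[0,0,1]$ is symmetric. This chart is $\AA^2/\mu_b$ for a $\mu_b$-cover with coordinates $s = u_0$, $t = u_2$, and the given $\GG_m$-action descends from the action $(s,t) \mapsto (\lambda^{-1} s, t)$ on the cover, whereas the full 2-torus on $\PP(a,b,c)$ acts diagonally on $(s,t)$. Thus a $\GG_m$-invariant exceptional divisor $F$ centered at $[0,1,0]$ corresponds to a $\GG_m$-invariant (and $\mu_b$-invariant) divisorial valuation $v$ on $k(s,t)$ centered at the origin, and showing $F$ is torus-invariant reduces to showing $v$ is a monomial valuation in $(s,t)$.

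Set $a_v := v(s) > 0$ and $b_v := v(t) > 0$, and let $w$ denote the monomial valuation with $w(s) = a_v$, $w(t) = b_v$. For any $f \in k[s,t]$, decompose $f$ by $\GG_m$-weight as $f = \sum_i s^i g_i(t)$ with $g_i \in k[t]$, so that $s^i g_i$ is a weight vector of weight $-i$. Writing $g_i = t^{k_i} h_i$ with $h_i(0) \neq 0$, the polynomial $h_i$ does not vanish at the origin, so $v(h_i) = 0$, and hence $v(s^i g_i) = w(s^i g_i) = i a_v + k_i b_v$.

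The heart of the argument is the identity $v(f) = \min_i v(s^i g_i)$, which immediately gives $v = w$. The inequality $v(f) \geq \min_i v(s^i g_i)$ follows from the ultrametric triangle inequality. For the reverse, I use that $\GG_m$-invariance of $v$ endows the associated graded ring $\gr_v^\bullet k[s,t]$ with a $\GG_m$-action refining the valuation grading by a $\GG_m$-weight grading. If $v(f) > m := \min_i v(s^i g_i)$, then the image $[f]_m \in \gr_v^m$ vanishes; but $[f]_m = \sum_i [s^i g_i]_m$ is a sum of nonzero elements, for those $i$ attaining the minimum, lying in pairwise distinct $\GG_m$-weight components of $\gr_v^m$, so it cannot vanish. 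This contradiction yields $v(f) = \min_i v(s^i g_i) = w(f)$, so $v = w$ is monomial and $F$ is invariant under the standard torus action on $\PP(a,b,c)$.

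I expect the main obstacle to be the graded-ring weight-decomposition step; the toric and orbifold bookkeeping around the reduction to $\AA^2/\mu_b$ is straightforward given that the $\mu_b$- and $\GG_m$-actions commute.
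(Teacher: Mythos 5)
Your proof is correct and follows essentially the same route as the paper: both decompose a function into its $\GG_m$-weight components, show the valuation equals the minimum over those components (you via the weight grading on $\gr_v$, the paper via a Vandermonde/Gaussian-elimination argument on translates $t\cdot f$, which is the same mechanism), and then observe that within a fixed weight component the valuation is read off from the lowest power of the remaining coordinate. The only cosmetic difference is that you pass to the smooth $\mu_b$-cover $\AA^2$ of the chart $\{u_1\neq 0\}$, whereas the paper works directly with the invariant coordinate ring of that chart.
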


\begin{proof}
Let $v = \ord_F$ be a $\GG_m$-invariant valuation on $X$. Suppose the center of $v$ is $[0,1,0]$ (the case $[0,0,1]$ can be proved similarly). Then $v(u_0), v(u_2) > 0$ and $v(u_1) = 0$. The coordinate ring of the affine open set $U := (u_1\neq 0)$ is given by 
\[S = \CC\left[\left\{\frac{u_0^\alpha u_2^\gamma}{u_1^\beta}: \alpha,\beta,\gamma \geq 0, a\alpha+c\gamma = b\beta\right\}\right]\]
Let $f \in S$. Then we may write $f$ as
\[
f = \sum_{\alpha} u_0^\alpha f_\alpha(u_1^{-1},u_2),
\]
where $f_\alpha$ is a polynomial in $u_1^{-1}, u_2$ with homogeneous degree $-a\alpha$. Since $v$ is $\GG_m$-invariant, for any $t\in \GG_m$,
\[
v(f) = v(t\cdot f) = v\left(\sum_\alpha t^{-\alpha}u_0^{\alpha} f_\alpha\right).
\]
By taking various $t$'s and performing Gaussian eliminations, we obtain that $v(f) \geq v(u_0^{\alpha}f_\alpha)$ for all $\alpha$ and hence
\[
v(f) = \min_\alpha\{v(u_0^{\alpha}f_\alpha)\}.
\]
Furthermore, since $v(u_1) = 0$ and $v(u_2) > 0$, every monomial in $f_\alpha$ has distinct $v$-valuation and $v(f_\alpha)$ is determined by the monomial in $f_\alpha$ with the lowest power in $u_2$. In particular,
\[
v(f) = \min\{v(u_0^{\alpha}u_1^{-\beta}u_2^{\gamma}): u_0^{\alpha}u_1^{-\beta}u_2^{\gamma}\text{ is a monomial in }f\},
\]
which implies that $v$ is a torus-invariant valuation.
\end{proof}

\begin{lemma}\label{lemma:MMP_isomorphism_away_from_L0}
In diagram (\ref{equation:construction_of_special_degeneration}), the birational maps $\mathcal{Y}\dashrightarrow \mathcal{X}'$ and $\mathcal{X}'\dashrightarrow \mathcal{X}^m$ are isomorphisms in a neighborhood of the closure of $\{x\}\times (\AA^1\setminus \{0\})$.
\end{lemma}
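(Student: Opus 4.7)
The plan is to reduce to the central fibers and then confine the exceptional locus of each MMP to a Zariski-closed subset disjoint from the specialization of $\{x\}\times \AA^1$. Over $\AA^1\setminus\{0\}$, every model in diagram~(\ref{equation:construction_of_special_degeneration}) is canonically the trivial product $X\times (\AA^1\setminus\{0\})$, so both birational maps are already isomorphisms there; it therefore suffices to exhibit a Zariski open neighborhood of $x_1$ in $\mathcal{Y}$ (resp.\ of $x_2$ in $\mathcal{X}'$) on which step (2) (resp.\ step (3)) of Construction~\ref{construction: special_degeneration} is an isomorphism.

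For step (2), I would first identify $x_1$ explicitly. Since $\mathcal{Y}\to X\times \AA^1$ is the $(1,p,q)$-weighted blow-up with weight $1$ along $X\times\{0\}$ by Lemma~\ref{lemma:Y_is_1,p,q_weighted_blow_up}, the strict transform of $\{x\}\times \AA^1$ meets $S_1=\PP(1,p,q)$ at the point $[1:0:0]$; this point lies in the smooth chart $(u_0\neq 0)$ and avoids $L_0\cup L_1\cup L_2$. Since $X_1\cap S_1=L_0$, we get $x_1\notin X_1$. The MMP $\mathcal{Y}\dashrightarrow \mathcal{X}'$ is a $(-\epsilon S_1)$-MMP, and by the proof of Lemma~\ref{lemma:factorize_MMP_into_flips_and_divisorial_contraction}, each intermediate flip contracts a curve contained in (the successive strict transform of) $X_1$, while the final step is the divisorial contraction of $X_1$. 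Hence every modification is supported on $X_1$, and the Zariski open subset $U:=\mathcal{Y}\setminus X_1$, which contains the closure of $\{x\}\times (\AA^1\setminus \{0\})$ since that curve does not meet $X_1$, is mapped isomorphically onto an open subset of $\mathcal{X}'$.

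For step (3), I would compute intersection numbers on the central fiber. Since $S_2$ is a scheme-theoretic fiber of $\mathcal{X}'\to \AA^1$ over a point of an affine base, its normal bundle is trivial, so $S_2|_{S_2}=0$, and adjunction gives $K_{\mathcal{X}'}|_{S_2}=K_{S_2}$. Restricting the relative log Calabi--Yau identity $K_{\mathcal{X}'}+\Delta_{\mathcal{X}'}+S_2\sim_{\AA^1}0$ to $S_2$ yields $K_{S_2}+C_{S_2}=0$, so for any curve $C\subseteq S_2$,
\begin{equation*}
-K_{\mathcal{X}'}\cdot C \;=\; -K_{S_2}\cdot C \;=\; C_{S_2}\cdot C.
\end{equation*}
Because the $(K_{\mathcal{X}'}+\Delta_{\mathcal{X}'}+\epsilon \mathcal{G}_{\mathcal{X}'})\sim_{\QQ,\AA^1}-\epsilon K_{\mathcal{X}'}$-MMP is run over the affine base $\AA^1$, only curves contained in a single fiber can be contracted; on a general fiber $X_t\cong X$ the class $-K_X$ is ample, ruling out contractions there. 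A contracted curve $C$ in $S_2$ must therefore satisfy $C_{S_2}\cdot C<0$, and since $C_{S_2}=L_{1,S_2}+L_{2,S_2}$ is effective, this forces $C$ to be a component of $C_{S_2}$. As $x_2\notin C_{S_2}$, the Zariski open subset $V:=\mathcal{X}'\setminus (L_{1,S_2}\cup L_{2,S_2})$ is a neighborhood of the closure of $\{x\}\times (\AA^1\setminus \{0\})$ on which $\mathcal{X}'\dashrightarrow \mathcal{X}^m$ is an isomorphism. The main delicate point is the identification $S_2|_{S_2}=0$ and the resulting numerical equivalence $-K_{\mathcal{X}'}|_{S_2}\equiv C_{S_2}$ on $S_2$; once these are in hand, the rest reduces to a direct avoidance argument.
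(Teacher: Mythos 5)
Your treatment of step (2) is fine and essentially matches the paper: the $(-\epsilon S_1)$-MMP only modifies $\mathcal{Y}$ along (successive strict transforms of) $X_1$, and $x_1\notin X_1$ since $X_1\cap S_1=L_0=(u_0=0)$ while $x_1=[1:0:0]$. Your reduction of step (3) to the statement that only components of $C_{S_2}$ can be contracted is also correct and is exactly how the paper begins.

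However, step (3) then collapses on a false assertion: you claim $x_2\notin C_{S_2}$, whereas in fact $x_2$ \emph{is} the node of $C_{S_2}$. The point $x$ is a node of $C$ (the center of $E$, which is a weighted blow-up along the two branches of $C$ at $x$), and by Lemma~\ref{lemma:Y_is_1,p,q_weighted_blow_up} the strict transform of $C\times\AA^1$ meets $S_1=\PP(1,p,q)$ in $(u_1u_2=0)=L_1\cup L_2$, while the strict transform of $\{x\}\times\AA^1$ meets $S_1$ at $x_1=[1:0:0]\in L_1\cap L_2$ (your parenthetical claim that $x_1$ avoids $L_1\cup L_2$ is already wrong, though harmless for step (2)). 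Consequently $x_2\in L_{1,S_2}\cap L_{2,S_2}\subseteq C_{S_2}$, your open set $V=\mathcal{X}'\setminus(L_{1,S_2}\cup L_{2,S_2})$ does \emph{not} contain the closure of $\{x\}\times(\AA^1\setminus\{0\})$, and the avoidance argument proves nothing about a neighborhood of $x_2$. (A smaller inaccuracy: $C_{S_2}$ need not equal $L_{1,S_2}+L_{2,S_2}$; the flips in step (2) can contribute further components, and the paper only asserts $C_{S_2}\geq L_{1,S_2}+L_{2,S_2}$.)

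The actual content of the lemma at this step is precisely to rule out that the two components of $C_{S_2}$ \emph{through} $x_2$ get contracted, and this requires an input your argument never uses: the speciality of $E$. The paper chooses $\Delta'\sim_{\QQ}-K_X$ with $E$ an lc place of $(X,\Delta')$ and $\Delta'\geq\epsilon G$ for a \emph{general} ample $G$, shows that $x_2$ is an lc center of $(K_{\mathcal{X}'}+(1-\epsilon)\mathcal{C}'_{\mathcal{X}'}+S_2)$ so that $\mathcal{G}_{\mathcal{X}'}$ cannot pass through $x_2$, deduces $-K_{\mathcal{X}'}\cdot L_{i,S_2}=\mathcal{G}_{\mathcal{X}'}\cdot L_{i,S_2}\geq 0$ for $i=1,2$, and then runs an induction over the steps of the $-K_{\mathcal{X}'}$-MMP to conclude that $L_{1}$ and $L_{2}$ survive. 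Without some such positivity argument for the curves through $x_2$, the lemma is not established.
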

\begin{proof}
It suffices to show that on the central fibers, the rational maps $S_1\dashrightarrow S_2$ and $S_2\dashrightarrow S_3$ are isomorphisms in a neighborhood of $x_i\in S_i$. By Lemma~\ref{lemma:factorize_MMP_into_flips_and_divisorial_contraction}, $S_1\dashrightarrow S_2$ is an isomorphism away from $L_0$, so it is an isomorphism in a neighborhood of $x_1 = [1,0,0]\not\in L_0$.

We note that $(\mathcal{X}', \mathcal{C}_{\mathcal{X}'} + S_2)$ is crepant birational to the pair $(X\times \AA^1, C\times \AA^1 + X\times \{0\})$, so it is log canonical and log Calabi-Yau over $\AA^1$. By adjunction, $(S_2, C_{S_2})$ is a log canonical and log Calabi-Yau pair. Thus, $(S_2, C_{S_2})$ is crepant birational to $(S_1, L_0 + C_{S_1} = L_0 + L_1 + L_2)$ and hence $C_{S_2}\geq L_{1,S_2} + L_{2,S_2}$. In particular, $(S_2, C_{S_2})$ is an SNC pair in a neighborhood of $x_2$. Since the MMP on $\mathcal{X}'$ as in Construction~\ref{construction: special_degeneration} is a MMP over $\AA^1$ with divisor $-K_{\mathcal{X}'}\sim_{\QQ, \AA^1} \mathcal{C}_{\mathcal{X}'}$, each step of this MMP only contracts components of $C_{S_2}$. Thus, it suffices to prove that neither $L_{1,S_3}$ nor $L_{2,S_3}$ is contracted.

Since $E$ is a special divisor, by Theorem~\ref{thm:special_divisor_criterion}, we may choose a $\QQ$-divisor $\Delta'\sim_\QQ -K_X$ such that $E$ is a log canonical place of $(X,\Delta')$ and $\Delta'\geq \epsilon G$ for a general ample divisor $G\sim_\QQ -K_X$ on $X$. Since $\{x\}\times \AA^1$ is a log canonical center of $(K_{X\times \AA^1} + \Delta'\times \AA^1 + X\times \{0\})$,  $x_{1}$ is a log canonical center of $(K_{\mathcal{Y}} + \Delta'_{\mathcal{Y}} + S_1)$. Write $\Delta' = (1-\epsilon) C' + \epsilon G$. Since $G$ is general, $\mathcal{G}_{\mathcal{Y}}$ does not contain $x_1$, where $\mathcal{G}_{\mathcal{Y}}$ is the strict transform of $G\times \AA^1$ on $\mathcal{Y}$. Then, $x_1$ is a log canonical center of $(K_{\mathcal{Y}} + (1-\epsilon)\mathcal{C}'_{\mathcal{Y}} + S_1)$, where $\mathcal{C}'_{\mathcal{Y}}$ is the strict transform of $C'\times \AA^1$. Since $\mathcal{Y}\dashrightarrow \mathcal{X}'$ is an isomorphism in a neighborhood of $x_{1}$, $x_{2}$ is a log canonical center of $(K_{\mathcal{X}'} + (1-\epsilon)\mathcal{C}'_{\mathcal{X}'} + S_2)$. As $(K_{\mathcal{X}'} + \Delta_{\mathcal{X}'}' + S_2)$ is log canonical, $\mathcal{G}_{\mathcal{X}'}$ does not contain $x_{2}$. In particular, $L_{1,S_2}$, $L_{2,S_2}$ are not contained in the support of $\mathcal{G}_{\mathcal{X}'}$. This implies that
\[
-K_{\mathcal{X}'}\cdot L_{i,S_2} = \mathcal{G}_{\mathcal{X}'}\cdot L_{i,S_2} \geq 0.
\]
for $i= 1,2$. Hence, neither $L_{1,S_2}$ nor $L_{2,S_2}$ is contracted in the first step of the $-K_{\mathcal{X}'}$-MMP on $\mathcal{X}'$ as in Construction \ref{construction: special_degeneration}.(3).

By an inductive argument, at each step of the $-K_{\mathcal{X}'}$-MMP, the strict transform of $\mathcal{G}_{\mathcal{X}'}$ does not contain the image of $x_{2}$ and hence does not contain the strict transform of $L_{1,S_2}$ (or $L_{2,S_2}$). Therefore, this MMP is an isomorphism in a neighborhood of $x_2$ and so is $S_2\dashrightarrow S_3$.
\end{proof}

\begin{lemma}\label{lemma:construction_of_tilde_Y}
Let $\phi:\Tilde{X}\to X$ be the $(p,q)$-weighted blow-up of $X$ at $x$ along the two branches of $C$. Then $E$ is the reduced exceptional divisor of $\phi$. Let $E_{\phi}$ be the scheme-theoretic exceptional divisor of $\phi$.
Let $\Tilde{\mathcal{Y}} \to \Tilde{X}\times \AA^1$ be the blow-up of the closed subscheme $E_{\phi}\times \{0\}\hookrightarrow \Tilde{X}\times \AA^1$. Then there exists a morphism $\Tilde{\mathcal{Y}}\to\mathcal{Y}$ which fits into a commutative diagram
\[
\begin{tikzcd}
        \Tilde{\mathcal{Y}} \arrow{r} \arrow{d} & \Tilde{X}\times \AA^1\arrow{d} \\
        \mathcal{Y} \arrow{r} & X\times \AA^1
    \end{tikzcd}
\]
such that the exceptional divisor of $\Tilde{\mathcal{Y}}\to\mathcal{Y}$ is an irreducible divisor $\mathcal{E}$ which is the strict transform of $E\times \AA^1$. Furthermore, the central fibers of this commutative diagram is
\[
\begin{tikzcd}
    \Tilde{X}_1 \cup \Tilde{S}_1  \arrow{r}\arrow{d} & \Tilde{X}\arrow{d}\\
    X_1 \cup S_1 \arrow{r} & X
\end{tikzcd}
\]
where $\Tilde{X}_1 \cong\Tilde{X}\cong X_1$ and $\Tilde{S}_1\to S_1$ is the $(p,q)$-weighted blow-up at $x_1 = [1,0,0]$ along $L_1$ and $L_2$.
\end{lemma}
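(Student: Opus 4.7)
The plan is to produce the morphism $\Tilde{\mathcal{Y}}\to\mathcal{Y}$ from the universal property of blow-ups and then read off the remaining assertions from the local toric picture at $x\times\{0\}$.

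First, realize $\mathcal{Y}\to X\times\AA^1$ as the blow-up of an ideal sheaf $I\subseteq \OO_{X\times\AA^1}$ via Remark~\ref{remark: weighted_blow_up} (choose $r$ sufficiently divisible and take $I = I_r$). In local analytic coordinates $(x_1,x_2,s)$ at $x\times\{0\}$, with $C=(x_1x_2=0)$, $I$ is (the integral closure of) the ideal generated by monomials $s^a x_1^b x_2^c$ with $a+pb+qc\geq r$. Since $\phi^*x_1$ vanishes along $E$ with order $p$ and $\phi^*x_2$ vanishes with order $q$, a routine toric computation shows
\[
I\cdot \OO_{\Tilde X\times \AA^1} \;=\; J\cdot \OO_{\Tilde X\times \AA^1}(-r\cdot E\times \AA^1),
\]
where $J$ is the ideal of $E_\phi\times\{0\}$ (with the multiplicity built into $E_\phi$ chosen to match $r$). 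Thus the pullback of $I$ becomes invertible after further blowing up $E_\phi\times\{0\}$, and the universal property of blow-ups supplies a unique morphism $\Tilde{\mathcal{Y}}\to \mathcal{Y}$ fitting into the claimed commutative square.

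The exceptional-divisor statement follows from the local toric fan model. The fan of $\mathcal{Y}$ near $x\times\{0\}$ is obtained from the positive octant of $\ZZ^3$ by inserting the ray $w=(1,p,q)$, while the fan of $\Tilde{\mathcal{Y}}$ is obtained by further inserting the ray $v=(0,p,q)$, which corresponds to $E\times \AA^1$. The relation $w = v+e_s$ realizes the addition of $w$ on the $v$-subdivided fan as blowing up the torus-invariant intersection $V(v)\cap V(e_s) = E\times\{0\}$, consistent with the choice of center $E_\phi\times\{0\}$. Hence $\Tilde{\mathcal{Y}}\to\mathcal{Y}$ extracts exactly one prime divisor, namely the strict transform of the divisor $V(v) = E\times \AA^1$.

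For the central fiber, note that $\Tilde{\mathcal{Y}}\to \Tilde X\times \AA^1$ is an isomorphism away from $E_\phi\times\{0\}\subseteq \Tilde X\times\{0\}$, so the strict transform of $\Tilde X\times\{0\}$ is a component $\Tilde X_1\cong \Tilde X$, while the exceptional divisor of this blow-up forms the other component $\Tilde S_1$. The induced map $\Tilde S_1\to S_1$ comes from restricting $\Tilde{\mathcal{Y}}\to\mathcal{Y}$; computing the star of the ray $w$ before and after the additional subdivision by $v$ identifies this map with the $(p,q)$-weighted blow-up of $S_1\cong\PP(1,p,q)$ at the torus-fixed point $x_1=[1,0,0]$ along the two invariant lines $L_1$ and $L_2$ through it. The main obstacle I expect is the bookkeeping in the ideal-theoretic identity displayed above: one must carefully track integral closures and the multiplicity built into $E_\phi$ so that the universal-property application is clean. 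Once this identity is in place, the remaining statements are routine consequences of the toric fan dictionary.
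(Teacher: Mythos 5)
Your first half takes the same route as the paper (apply the universal property of blow-ups after computing the pullback of the ideal defining $\mathcal{Y}$ to $\Tilde{X}\times\AA^1$), but the displayed identity
\[
I\cdot \OO_{\Tilde X\times \AA^1} \;=\; J\cdot \OO_{\Tilde X\times \AA^1}(-r\cdot E\times \AA^1)
\]
is false, and it is exactly the step on which the universal-property application rests. Indeed $s^r\in I$ (take $a=r$, $b=c=0$), and its pullback $s^r$ has order $0$ along $E\times\AA^1$, whereas the right-hand side is contained in $\OO(-r\,E\times\AA^1)$; so the pullback of $I$ is not a line-bundle twist of $J$. The correct computation has a different shape: writing $u$ for a local orbifold equation of $E$ on $\Tilde X$, the generator $s^ax_1^bx_2^c$ pulls back to $s^au^{pb+qc}$ times strict-transform factors, and one checks chart by chart that $\overline{I\cdot\OO_{\Tilde X\times\AA^1}}=\overline{(s,\OO_{\Tilde X}(-E))^{\,r}}$, i.e.\ $(s,u)^r$ in charts — the $r$-th power of the ideal of the \emph{reduced} subscheme $E\times\{0\}$. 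The multiplicity you defer to ``bookkeeping'' is therefore decisive, not cosmetic: the normalized blow-up of $(s,u)^r$ coincides with the blow-up of $(s,u)$, inserts the ray $e_s+v=(1,p,q)$, and renders the pullback of $I$ invertible; by contrast, blowing up the ideal $(s,u^r)$ of $r\,E\times\{0\}$ inserts the ray $re_s+v=(r,p,q)$, the pullback of $I$ does \emph{not} become invertible there, and for $r>1$ no morphism to $\mathcal{Y}$ exists. Your own fan relation $w=v+e_s$, with coefficient $1$ on $v$, already forces the center to be taken with multiplicity one along $E$; you need to make this choice explicit and replace the displayed factorization by the integral-closure statement, otherwise the argument breaks at its key step.

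The second half — identifying $\Tilde S_1\to S_1$ by comparing the star of $w$ before and after inserting $v$ — is a genuinely different and more direct route than the paper's. The paper instead observes that the exceptional divisor $\mathcal{E}$ of $\Tilde{\mathcal{Y}}\to\mathcal{Y}$ is horizontal over $\AA^1$ and is a log canonical place of $(\mathcal{Y},\mathcal{C}_{\mathcal{Y}}+S_1+X_1)$, so by adjunction its restriction to the central fiber is an lc place of $(S_1,L_0+L_1+L_2)$ centered at $x_1$, hence the exceptional divisor of a weighted blow-up along $L_1,L_2$, with weights $(p,q)$ read off from the general fiber. Your toric computation gives the same conclusion (the image of $v=(0,p,q)$ in $N/\ZZ w$ is $p\bar e_1+q\bar e_2$, which is the $(p,q)$-weighted star subdivision at $L_1\cap L_2=[1,0,0]$), and it avoids the discrepancy argument; but you should justify why the formal-local toric model at $x\times\{0\}$ controls the global blow-ups (it does, since $(X,C)$ is normal crossing near $x$ and all centers lie over $x\times\{0\}$), and you omit the identification $X_1\cong\Tilde X$, which the paper obtains by noting that $X_1$ is itself the $(p,q)$-weighted blow-up of $X$ at $x$ along the two branches of $C$.
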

\begin{proof}
By Remark~\ref{remark: weighted_blow_up}, $\Tilde{X}\to X$ is the blow-up of $X$ along a $\mathfrak{m}_x$-primary ideal $\mathcal{I}\subseteq \OO_X$, such that the preimage of $\mathcal{I}$ on $\Tilde{X}$ is $\OO_{\Tilde{X}}(-E_{\phi})$. Then, $\mathcal{Y}\to X\times \AA^1$ is the blow-up of $X\times \AA^1$ along the ideal sheaf $(\mathcal{I},s)\subseteq \OO_{X\times \AA^1}\cong \OO_X[s]$. The preimage of $(\mathcal{I},s)$ on $\Tilde{X}\times \AA^1$ is the ideal $(\OO_{\Tilde{X}}(-E_{\phi}), s) \subseteq \OO_{\Tilde{X}}[s]$, which is the ideal of the closed subscheme $E_{\phi}\times \{0\}$. Therefore, by the universal property of blow-ups, there exists a birational morphism $\Tilde{\mathcal{Y}}\to \mathcal{Y}$ which fits into a commutative diagram
\[
\begin{tikzcd}
        \Tilde{\mathcal{Y}} \arrow{r} \arrow{d} & \Tilde{X}\times \AA^1\arrow{d} \\
        \mathcal{Y} \arrow{r} & X\times \AA^1
    \end{tikzcd}
\]
The exceptional divisor of $\Tilde{\mathcal{Y}}\to\mathcal{Y}$ is the strict transform of the exceptional divisor of $\Tilde{X}\times \AA^1\to X\times \AA^1$, i.e., the strict transform of $E\times \AA^1$.  

We now consider the central fibers. Since $E_{\phi}\times \{0\}$ is a divisor on the central fiber of $\Tilde{X}\times \AA^1$, the central fiber of $\Tilde{\mathcal{Y}}$ has two components $\Tilde{X}_1$ and $\Tilde{S}_1$, where $\Tilde{X}_1\cong \Tilde{X}$. Note that $X_1$ is also a $(p,q)$-blow up of $X$ at $x$ along the two branches of $C$, so $X_1\cong \Tilde{X} \cong \Tilde{X}_1$. Since $\Tilde{\mathcal{Y}}\to \mathcal{Y}$ is a birational map whose exceptional divisor $\mathcal{E}$ is horizontal over $\AA^1$, $\Tilde{S}_1\to S_1$ is also a projective birational map with exceptional divisor $E_0$ (which is the central fiber of $\mathcal{E}$). We note that $\mathcal{E}$ is a log canonical place of the pair $(\mathcal{Y}, \mathcal{C}_{\mathcal{Y}}+S_1 + X_1)$, so $E_0$ is a log canonical place of $(S_1, L_0 + L_1 + L_2)$ by adjunction. Furthermore, the center of $E_0$ on $S_1$ is $x_1$, so $E_0$ must be the exceptional divisor of a weighted blow-up of $S_1$ at $x$ along $L_1$ and $L_2$. The weights must be $p$ and $q$ because $\Tilde{S}_1\to S_1$ is a $(p,q)$ weighted blow-up on the generic fiber.
\end{proof}

\begin{lemma}\label{lemma:compatibility_of_MMP_in_step_2}
There exists a $\GG_m$-equivariant MMP over $\AA^1$ on $\Tilde{\mathcal{Y}}$ with the divisor
\[
K_{\Tilde{\mathcal{Y}}} +{\mathcal{C}}_{\Tilde{\mathcal{Y}}} + \Tilde{X}_1 + (1-\epsilon)\tilde{S}_1 \sim_{\QQ, \AA^1} - \epsilon \Tilde{S}_1,
\]
for a sufficiently small $\epsilon > 0$. Furthermore, each step of this MMP on $\Tilde{\mathcal{Y}}$ is compatible with the $\GG_m$-equivariant MMP over $\AA^1$ on $\mathcal{Y}$ as in Construction~\ref{construction: special_degeneration}.(2) in the following sense: 

Suppose the MMP in Construction~\ref{construction: special_degeneration}.(2) has the form
\[
\mathcal{Y} = \mathcal{Y}_0 \dashrightarrow \mathcal{Y}_1\dashrightarrow \cdots \dashrightarrow \mathcal{Y}_n = \mathcal{X}'.
\]
Then, we can arrange the MMP on $\Tilde{\mathcal{Y}}$ over $\AA^1$ as
\[
\Tilde{\mathcal{Y}} = \Tilde{\mathcal{Y}}_0 \dashrightarrow \Tilde{\mathcal{Y}}_1 \dashrightarrow \cdots \dashrightarrow \Tilde{\mathcal{Y}}_n = \Tilde{\mathcal{X}'},\]
such that $\Tilde{\mathcal{Y}}_i\to \mathcal{Y}_i$ is a $\GG_m$-equivariant morphism with an irreducible exceptional divisor which is the strict transform of $E\times \AA^1$.
\end{lemma}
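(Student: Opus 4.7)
The strategy is to lift the bottom MMP $\mathcal{Y} = \mathcal{Y}_0 \dashrightarrow \cdots \dashrightarrow \mathcal{Y}_n = \mathcal{X}'$ of Construction~\ref{construction: special_degeneration}.(2) inductively to a compatible MMP on $\tilde{\mathcal{Y}}$. The key input is Lemma~\ref{lemma:MMP_isomorphism_away_from_L0}, which guarantees that the downstairs MMP is an isomorphism in an open neighborhood $U$ of $\overline{\{x\} \times (\mathbb{A}^1 \setminus \{0\})}$, which is precisely where $\pi\colon \tilde{\mathcal{Y}} \to \mathcal{Y}$ fails to be an isomorphism. Thus the two modifications take place on disjoint Zariski opens, and each flip or divisorial contraction downstairs lifts cleanly.

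I would first justify the $\mathbb{Q}$-linear equivalence by crepancy. The pair $(\mathcal{Y}, \mathcal{C}_{\mathcal{Y}} + X_1 + S_1)$ is log canonical and log Calabi-Yau over $\mathbb{A}^1$ (being crepant to $(X \times \mathbb{A}^1, C \times \mathbb{A}^1 + X \times \{0\})$), and $\mathcal{E}$ is a log canonical place of this pair. Pulling back through $\pi$ yields
\[
K_{\tilde{\mathcal{Y}}} + \mathcal{E} + \mathcal{C}_{\tilde{\mathcal{Y}}} + \tilde{X}_1 + \tilde{S}_1 \sim_{\mathbb{Q},\, \mathbb{A}^1} 0,
\]
which rearranges to the advertised equivalence (absorbing $\mathcal{E}$ into the boundary of the MMP).

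For the inductive step, suppose we have constructed $\pi_i\colon \tilde{\mathcal{Y}}_i \to \mathcal{Y}_i$ with unique exceptional divisor $\mathcal{E}_i$, the strict transform of $\mathcal{E}$, whose center lies in (the image of) $U$. Let $\Sigma_i \subseteq \mathcal{Y}_i$ be a curve contracted by the next step of the downstairs MMP; by Lemma~\ref{lemma:MMP_isomorphism_away_from_L0}, $\Sigma_i$ is disjoint from the center of $\mathcal{E}_i$, so $\pi_i$ is an isomorphism in a Zariski neighborhood of $\Sigma_i$. Its strict transform $\tilde{\Sigma}_i$ is isomorphic to $\Sigma_i$ and, by the projection formula applied to the crepant morphism $\pi_i$, generates a $(-\epsilon \tilde{S}_1)$-negative extremal ray of $\overline{NE}(\tilde{\mathcal{Y}}_i / \mathbb{A}^1)$ with the same intersection data as $\Sigma_i$. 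The associated flip or divisorial contraction exists: it agrees locally with its downstairs counterpart on a neighborhood of $\tilde{\Sigma}_i$ where $\pi_i$ is an isomorphism, and this local modification glues with the identity on the complement to produce $\tilde{\mathcal{Y}}_{i+1} \to \mathcal{Y}_{i+1}$ whose only exceptional divisor is once again the strict transform of $\mathcal{E}$. The $\mathbb{G}_m$-equivariance is preserved at each step because $\pi$ itself is $\mathbb{G}_m$-equivariant (the blown-up ideal in Lemma~\ref{lemma:construction_of_tilde_Y} is $\mathbb{G}_m$-homogeneous) and the downstairs MMP is equivariant by construction.

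The main obstacle will be rigorously constructing each $\tilde{\mathcal{Y}}_{i+1}$ as a projective variety over $\mathbb{A}^1$ rather than merely as a gluing of local pieces, and verifying that the lifted sequence terminates in exactly $n$ steps with the strict transform of $\mathcal{E}$ surviving throughout. Since the flip or divisorial contraction is constructed as the $\mathrm{Proj}$ of an appropriate graded algebra in a neighborhood of $\tilde{\Sigma}_i$, and this local picture matches the downstairs construction verbatim under $\pi_i$, the global projectivity, equivariance, and termination of the upstairs MMP all reduce to the corresponding downstairs statements already established in Construction~\ref{construction: special_degeneration}.
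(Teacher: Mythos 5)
Your proposal is correct in substance and rests on the same geometric fact as the paper's proof: the modification $\Tilde{\mathcal{Y}}\to\mathcal{Y}$ is concentrated along the closure of $\{x\}\times(\AA^1\setminus\{0\})$, while every step of the step-(2) MMP is concentrated near $X_1\cap S_1=L_0$, and these two loci are disjoint. The difference is the direction of the lifting. The paper runs the MMP \emph{upstairs} on $\Tilde{\mathcal{Y}}$ --- which exists by the general relative MMP --- and then argues, as in Lemmas~\ref{lemma:factorize_MMP_into_flips_and_divisorial_contraction} and~\ref{lemma:MMP_isomorphism_away_from_L0}, that each of its steps is an isomorphism away from $L_{0,\Tilde{S}_1}$; since $\Tilde{\mathcal{Y}}\to\mathcal{Y}$ is an isomorphism near $L_0$ and $\Tilde{S}_1$ is the pullback of $S_1$ there, the upstairs steps can then be chosen to coincide with the downstairs ones. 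You instead take the downstairs MMP as given and lift it step by step, which obliges you to build each $\Tilde{\mathcal{Y}}_{i+1}$ by gluing and to verify that the lifted curve $\Tilde{\Sigma}_i$ still spans a $(-\epsilon\Tilde{S}_1)$-negative \emph{extremal} ray of $\overline{NE}(\Tilde{\mathcal{Y}}_i/\AA^1)$ --- neither of which is automatic, since extremality need not be preserved under the extraction $\pi_i$ (the relative Picard rank goes up by one) and a gluing of quasi-projective pieces is not obviously projective over $\AA^1$. You flag both points yourself but dispatch them quickly; the paper's direction avoids them entirely, at the cost of having to redo the localization argument on $\Tilde{\mathcal{Y}}$. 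Your crepancy computation justifying the $\QQ$-linear equivalence (using that $\mathcal{E}$ is an lc place of the pulled-back log Calabi--Yau pair) is a detail the paper leaves implicit and is worth keeping.
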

\begin{proof}
We run a $\GG_m$-equivariant MMP on $\Tilde{\mathcal{Y}}$ over $\AA^1$ with the divisor
\[
K_{\Tilde{\mathcal{Y}}} +{\mathcal{C}}_{\Tilde{\mathcal{Y}}} + \Tilde{X}_1 + (1-\epsilon)\tilde{S}_1 \sim_{\QQ, \AA^1} - \epsilon \Tilde{S}_1.
\]
Similar to the proof of Lemma~\ref{lemma:MMP_isomorphism_away_from_L0}, this MMP is an isomorphism away from the strict transform of $L_0$ on $\Tilde{S}_1$. Furthermore, $\Tilde{\mathcal{Y}}\to \mathcal{Y}$ is an isomorphism in an open neighborhood of $L_0$ and $\Tilde{S}_1$ is the pullback of $S_1$ when restricting to this open neighborhood. Therefore, we can arrange the MMP on $\Tilde{\mathcal{Y}}$ so that it is compatible with the MMP on $\mathcal{Y}$ as in Construction~\ref{construction: special_degeneration}.(2).
\end{proof}

\begin{lemma}\label{lemma:compatibility_of_MMP_in_step_3}
Let $\Tilde{\mathcal{X}}'$ be the outcome of the MMP on $\mathcal{\mathcal{Y}}$ in Lemma~\ref{lemma:compatibility_of_MMP_in_step_2}. Then there exists a $\GG_m$-equivariant MMP on $\Tilde{\mathcal{X}'}$ over $\AA^1$ with the divisor \[K_{\Tilde{\mathcal{X}'}} + {\mathcal{C}}_{\Tilde{\mathcal{X}'}} + \mathcal{E}_{\Tilde{\mathcal{X}'}} + \epsilon {\mathcal{G}}_{\Tilde{\mathcal{X}'}}\sim_{\QQ,\AA^1} \epsilon {\mathcal{G}}_{\Tilde{\mathcal{X}'}} = \epsilon \Phi^*\mathcal{G}_{\mathcal{X}'}\sim_{\QQ,\AA^1} \epsilon \Phi^*(-K_{\mathcal{X}'}),\]
such that each step of this MMP is compatible with the $\GG_m$-equivariant MMP on $\mathcal{X}'$ over $\AA^1$ as in Construction~\ref{construction: special_degeneration}.(3).
\end{lemma}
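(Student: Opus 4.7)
The plan is to mimic the proof of Lemma~\ref{lemma:compatibility_of_MMP_in_step_2}, leveraging the fact that the MMP from Construction~\ref{construction: special_degeneration}.(3) is an isomorphism near the center of the exceptional divisor of $\Phi:\Tilde{\mathcal{X}'}\to\mathcal{X}'$. The first step is to pin down the numerical relation between pairs. Since $E$ is an lc place of $(X,C)$, the divisor $E\times\AA^1$ is an lc place of $(X\times\AA^1, C\times\AA^1 + X\times\{0\})$, and this pair is crepant birational to $(\mathcal{X}', \mathcal{C}_{\mathcal{X}'}+S_2)$. Hence $\mathcal{E}_{\Tilde{\mathcal{X}'}}$ is an lc place of the latter and, as $\Phi$ does not extract $S_2$,
\[
\Phi^*(K_{\mathcal{X}'}+\mathcal{C}_{\mathcal{X}'}) \;=\; K_{\Tilde{\mathcal{X}'}}+\mathcal{C}_{\Tilde{\mathcal{X}'}}+\mathcal{E}_{\Tilde{\mathcal{X}'}}.
\]
Since $G\sim_\QQ -K_X$ is general (in particular $x\notin G$), the divisor $\mathcal{G}_{\Tilde{\mathcal{X}'}}$ equals $\Phi^*\mathcal{G}_{\mathcal{X}'}$, so altogether $\Phi^*(K_{\mathcal{X}'}+\mathcal{C}_{\mathcal{X}'}+\epsilon\mathcal{G}_{\mathcal{X}'}) = K_{\Tilde{\mathcal{X}'}}+\mathcal{C}_{\Tilde{\mathcal{X}'}}+\mathcal{E}_{\Tilde{\mathcal{X}'}}+\epsilon\mathcal{G}_{\Tilde{\mathcal{X}'}}$.

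Next, write the MMP on $\mathcal{X}'$ from Construction~\ref{construction: special_degeneration}.(3) as $\mathcal{X}' = \mathcal{X}'_0\dashrightarrow\mathcal{X}'_1\dashrightarrow\cdots\dashrightarrow\mathcal{X}'_m = \mathcal{X}^m$, and construct the lifted tower $\Tilde{\mathcal{X}'}_i$ inductively. Given $\Phi_i:\Tilde{\mathcal{X}'}_i\to\mathcal{X}'_i$ which is $\GG_m$-equivariant with unique exceptional divisor the strict transform $\mathcal{E}_i$ of $E\times\AA^1$, consider the $(K_{\mathcal{X}'_i}+\mathcal{C}_{\mathcal{X}'_i}+\epsilon\mathcal{G}_{\mathcal{X}'_i})$-negative extremal ray $R_i$ to be contracted at step $i$. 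By Lemma~\ref{lemma:MMP_isomorphism_away_from_L0} the MMP is an isomorphism in an open neighborhood $U_i\subseteq\mathcal{X}'_i$ of the closure of $\{x\}\times(\AA^1\setminus\{0\})$, i.e., of the image of $\mathcal{E}_i$ under $\Phi_i$. In particular, any curve generating $R_i$ is disjoint from $U_i$ and thus lifts isomorphically via $\Phi_i$ to a curve $\Tilde{R}_i\subseteq\Tilde{\mathcal{X}'}_i$ disjoint from $\mathcal{E}_i$. The projection formula together with the relation from the previous paragraph gives
\[
(K_{\Tilde{\mathcal{X}'}_i}+\mathcal{C}_{\Tilde{\mathcal{X}'}_i}+\mathcal{E}_i+\epsilon\mathcal{G}_{\Tilde{\mathcal{X}'}_i})\cdot\Tilde{R}_i \;=\; (K_{\mathcal{X}'_i}+\mathcal{C}_{\mathcal{X}'_i}+\epsilon\mathcal{G}_{\mathcal{X}'_i})\cdot R_i \;<\;0,
\]
so $\Tilde{R}_i$ is $\GG_m$-equivariant and extremal for the lifted divisor.

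Finally, we perform the corresponding step on $\Tilde{\mathcal{X}'}_i$: if $\mathcal{X}'_i\dashrightarrow\mathcal{X}'_{i+1}$ is a divisorial contraction, the same extremal contraction of $\Tilde{R}_i$ exists on $\Tilde{\mathcal{X}'}_i$ and its target $\Tilde{\mathcal{X}'}_{i+1}$ admits a natural morphism to $\mathcal{X}'_{i+1}$ whose exceptional divisor is the strict transform of $\mathcal{E}_i$; if it is a flip, the existence of the flip of $\Tilde{R}_i$ follows since $\Tilde{\mathcal{X}'}_i$ is of Fano type over $\AA^1$ (both sides have the same generic fiber $X\times(\AA^1\setminus\{0\})$, and both central fibers inherit qdlt complements by adjunction), and the flipped variety $\Tilde{\mathcal{X}'}_{i+1}$ likewise admits a morphism to the corresponding flip $\mathcal{X}'_{i+1}$ with the asserted exceptional locus. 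Termination is inherited from termination of the MMP on $\mathcal{X}'$. The main obstacle is the bookkeeping in the inductive step — ensuring at each stage that $\Phi_i$ continues to be an isomorphism away from $\mathcal{E}_i$ and that $\mathcal{E}_i$ remains the unique exceptional divisor — but this is automatic given that $R_i$ avoids the neighborhood $U_i$ and the birational maps commute with $\Phi_i$ outside of it.
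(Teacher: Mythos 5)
Your proposal is correct and follows essentially the same route as the paper: first establish $\mathcal{G}_{\Tilde{\mathcal{X}'}}=\Phi^*\mathcal{G}_{\mathcal{X}'}$, then lift the MMP of Construction~\ref{construction: special_degeneration}.(3) step by step using that it is an isomorphism near the closure of $\{x\}\times(\AA^1\setminus\{0\})$ (Lemma~\ref{lemma:MMP_isomorphism_away_from_L0}); you actually supply more detail on the extremal-ray lifting than the paper, which simply asserts that the two MMPs can be arranged compatibly. The only small imprecision is your justification of $\mathcal{G}_{\Tilde{\mathcal{X}'}}=\Phi^*\mathcal{G}_{\mathcal{X}'}$: one needs not just $x\notin G$ (which handles the locus over $\AA^1\setminus\{0\}$) but also that the closure $\mathcal{G}_{\mathcal{X}'}$ misses the central-fiber point $x_2$, which is exactly what the lc-center argument in the proof of Lemma~\ref{lemma:MMP_isomorphism_away_from_L0} provides.
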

\begin{proof}
Let $G\sim_{\QQ}-K_X$ be a general ample divisor on $X$. Let $\mathcal{G}_{\mathcal{X}'}$ and $\mathcal{G}_{\Tilde{\mathcal{X}}'}$ denote the strict transforms of $G\times \AA^1$ on $\mathcal{X}'$ and $\Tilde{\mathcal{X}}'$. Similarly, $\mathcal{E}_{\mathcal{X}'}$ and $\mathcal{E}_{\Tilde{\mathcal{X}}'}$ denote the strict transforms of $E\times \AA^1$ on $\mathcal{X}'$ and $\Tilde{\mathcal{X}}'$. By the proof of Lemma~\ref{lemma:MMP_isomorphism_away_from_L0}, $\mathcal{G}_{\mathcal{X}'}$ does not contain $x_2$. By Lemma~\ref{lemma:compatibility_of_MMP_in_step_2}, $\Tilde{\mathcal{X}'}\to \mathcal{X}'$ is an isomorphism away from the closure of $\{x\}\times (\AA^1-\{0\})$. Thus, $\mathcal{G}_{\Tilde{\mathcal{X}}'} = \Phi_2^*\mathcal{G}_{{\mathcal{X}}'}$, where $\Phi_2$ denotes the morphism $\Tilde{\mathcal{X}}'\to \mathcal{X}'$.

We run a $\GG_m$-equivariant MMP on $\Tilde{\mathcal{X}'}$ over $\AA^1$ with the divisor \[K_{\Tilde{\mathcal{X}'}} + {\mathcal{C}}_{\Tilde{\mathcal{X}'}} + \mathcal{E}_{\Tilde{\mathcal{X}'}} + \epsilon {\mathcal{G}}_{\Tilde{\mathcal{X}'}}\sim_{\QQ,\AA^1} \epsilon {\mathcal{G}}_{\Tilde{\mathcal{X}'}} = \epsilon \Phi_2^*\mathcal{G}_{\mathcal{X}'}\sim_{\QQ,\AA^1} \epsilon \Phi_2^*(-K_{\mathcal{X}'}).\]
Since $\Phi_2$ is an isomorphism in a neighborhood of the support of $\mathcal{G}_{\mathcal{X}'}$, we can arrange this MMP on $\Tilde{\mathcal{X}}'$ to be compatible with the MMP on $\mathcal{X}'$ as in Construction~\ref{construction: special_degeneration}.(3).
\end{proof}

\begin{proof}[Proof of Theorem~\ref{thm:qdlt_Fano_type_model_on_special_degeneration}(a)-(e)]
Let $\Tilde{\mathcal{Y}}$ be the variety constructed in Lemma~\ref{lemma:construction_of_tilde_Y}, $\Tilde{\mathcal{X}}'$ be the outcome of the MMP on $\Tilde{\mathcal{Y}}$ in Lemma~\ref{lemma:compatibility_of_MMP_in_step_2}, and 
$\Tilde{\mathcal{X}}^m$ be the outcome of the MMP on $\Tilde{\mathcal{X}}'$ in Lemma~\ref{lemma:compatibility_of_MMP_in_step_3}. This completes the construction of diagram (\ref{equation:construction_of_special_degeneration}).\\
(a) This follows from definition.\\
(b) This follows from Lemma~\ref{lemma:compatibility_of_MMP_in_step_2} and Lemma~\ref{lemma:compatibility_of_MMP_in_step_3}.\\
(c) This follows from Lemma~\ref{lemma:construction_of_tilde_Y} for $\Tilde{X}\to X$ and $\Tilde{S}_1\to S_1$, and follows from Lemma~\ref{lemma:MMP_isomorphism_away_from_L0} for $\Tilde{S}_2\to S_2$ and $\Tilde{S}_3\to S_3$.\\
(d) This is Lemma~\ref{lemma:MMP_isomorphism_away_from_L0}.\\
(e) By Lemma~\ref{lemma:Y_is_1,p,q_weighted_blow_up}, $(S_1, C_{S_1}) = (S_1, L_1 + L_2)$ is an SNC pair in a neighborhood of $x_1$. By Lemma~\ref{lemma:MMP_isomorphism_away_from_L0}, $S_1\dashrightarrow S_3$ is an isomorphism in a neighborhood of $x_1$. Thus, $(S_3, C_{S_3})$ is an SNC pair in a neighborhood of $x_3$.
\end{proof}

\subsection{Qdlt Fano type model of $X'$}
The goal of this subsection is to prove Theorem~\ref{thm:qdlt_Fano_type_model_on_special_degeneration}(f).

Let $E_{\Tilde{S}_1}$ denote the exceptional divisor of $\Tilde{S}_1\to S_1$, and let $E_{\Tilde{S}_i}$ be the strict transform of $E_{\Tilde{S}_1}$ on $\Tilde{S}_i$ for $i=2,3$. Similar to the proof of Lemma~\ref{lemma:factorize_MMP_into_flips_and_divisorial_contraction}, we can factorize the map $\Tilde{S}_1\dashrightarrow \Tilde{S}_2$ as $\Tilde{S}_1\leftarrow \Tilde{S}_{12}\to \Tilde{S}_2$. Denote $g_1: \Tilde{S}_{12}\to \Tilde{S}_1$ and $g_2: \Tilde{S}_{12}\to \Tilde{S}_2$. Then $g_2$ is a divisorial contraction which contracts the strict transform of $L_0.$ By Lemma~\ref{lemma:compatibility_of_MMP_in_step_2}, there is a morphism $\Tilde{S}_{12}\to S_{12}$ which extracts the strict transform $E_{\Tilde{S}_{12}}$ of $E_{\Tilde{S}_1}$. 

\begin{lemma}\label{lemma:big_and_nef_divisor_on_tilde_S2}
There exists an effective $\QQ$-divisor $D_{\Tilde{S}_2}$ on $\Tilde{S}_2$ satisfying the following properties:
\begin{itemize}
    \item $D_{\Tilde{S}_2}$ is big and nef.
    \item $D_{\Tilde{S}_2}$ is fully supported on $\supp(C_{\Tilde{S}_2} - L_{2,\Tilde{S}_2} - E_{\Tilde{S}_2})$.
    \item There exists an effective divisor $B_{\Tilde{S}_2}$ on $\Tilde{S}_2$ such that $L_{2,\Tilde{S}_2}\cap E_{\Tilde{S}_2}$ is not contained in $\text{supp}(B_{\Tilde{S}_2})$ and
    $D_{\Tilde{S}_2} - \epsilon B_{\Tilde{S}_2}$ is ample for any sufficiently small $\epsilon > 0$.
\end{itemize}
\end{lemma}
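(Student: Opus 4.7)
The plan is to construct $D_{\Tilde{S}_2}$ explicitly as a $\QQ$-positive combination $aL_{1,\Tilde{S}_2}+bE_{\Tilde{S}_2}$, and to verify the big-and-nef property by exploiting the toric structure of the surfaces $\Tilde{S}_1,\Tilde{S}_{12},\Tilde{S}_2$ together with explicit intersection-number computations. First, I will compute intersection numbers on $\Tilde{S}_1$, which by Lemma~\ref{lemma:construction_of_tilde_Y} is the toric $(p,q)$-weighted blow-up of $S_1\cong\PP(1,p,q)$ at the torus-fixed point $x_1=[1:0:0]$. Standard toric calculations give $L_{1,\Tilde{S}_1}^2=L_{2,\Tilde{S}_1}^2=0$, $E_{\Tilde{S}_1}^2=-\tfrac{1}{pq}$, $L_{0,\Tilde{S}_1}^2=\tfrac{1}{pq}$, $L_{1,\Tilde{S}_1}\cdot E_{\Tilde{S}_1}=\tfrac{1}{q}$, $L_{2,\Tilde{S}_1}\cdot E_{\Tilde{S}_1}=\tfrac{1}{p}$, $L_{0,\Tilde{S}_1}\cdot L_{1,\Tilde{S}_1}=\tfrac{1}{q}$, $L_{0,\Tilde{S}_1}\cdot L_{2,\Tilde{S}_1}=\tfrac{1}{p}$, with other intersections vanishing. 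For any $\alpha,\beta>0$ with $\beta\le\alpha p$, the class $\alpha L_{1,\Tilde{S}_1}+\beta E_{\Tilde{S}_1}$ has positive self-intersection and nonnegative intersection with each of the torus-invariant curves $L_0,L_1,L_2,E$; since these generate the Mori cone of the toric surface $\Tilde{S}_1$, the class is big and nef on $\Tilde{S}_1$.

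Next, I will transport this to $\Tilde{S}_2$ using the factorization $\Tilde{S}_1\xleftarrow{g_1}\Tilde{S}_{12}\xrightarrow{g_2}\Tilde{S}_2$ obtained from the analog of Lemma~\ref{lemma:factorize_MMP_into_flips_and_divisorial_contraction} applied to the $\Tilde{\mathcal{Y}}$-MMP via Lemma~\ref{lemma:compatibility_of_MMP_in_step_2}; here $g_1$ extracts torus-invariant divisors centered on $L_{0,\Tilde{S}_1}$ and $g_2$ is the divisorial contraction of $L_{0,\Tilde{S}_{12}}$, so $\Tilde{S}_{12}$ and $\Tilde{S}_2$ are again toric. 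I will define $D_{\Tilde{S}_2}:=aL_{1,\Tilde{S}_2}+bE_{\Tilde{S}_2}$ for specific $a,b>0$ chosen so that the pullback $g_2^*D_{\Tilde{S}_2}$ on $\Tilde{S}_{12}$ has nonnegative intersection with every $g_1$-exceptional torus-invariant curve and with $L_{0,\Tilde{S}_{12}}$; combined with the calculation on $\Tilde{S}_1$, this implies that $D_{\Tilde{S}_2}$ is big and nef on $\Tilde{S}_2$, with support exactly on $L_{1,\Tilde{S}_2}\cup E_{\Tilde{S}_2}$. For the perturbation, I will take $B_{\Tilde{S}_2}$ to be a general effective ample divisor on $\Tilde{S}_2$ not passing through the single closed point $L_{2,\Tilde{S}_2}\cap E_{\Tilde{S}_2}$, which exists by Bertini; then $D_{\Tilde{S}_2}-\epsilon B_{\Tilde{S}_2}$ is ample for small $\epsilon>0$ by openness of the ample cone.

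The hard part will be the bookkeeping in the second step: choosing $(a,b)$ so that $D_{\Tilde{S}_2}$ remains nef across all the flip-extracted divisors of $g_1$ on $\Tilde{S}_{12}$, and checking that the descent through $g_2$ does not introduce unwanted components in the support. The specialty of $E$ over $X$ enters implicitly through Lemma~\ref{lemma:compatibility_of_MMP_in_step_2}, which ensures the MMP on $\Tilde{\mathcal{Y}}$ is well-behaved enough that the intersection calculations on $\Tilde{S}_{12}$ go through and the $g_1$-exceptional contributions can be controlled.
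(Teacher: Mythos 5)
There are three genuine gaps here, and together they miss the actual content of the lemma. First, your candidate divisor $D_{\Tilde{S}_2}=aL_{1,\Tilde{S}_2}+bE_{\Tilde{S}_2}$ has the wrong support: the lemma demands that $D_{\Tilde{S}_2}$ be fully supported on $\supp(C_{\Tilde{S}_2}-L_{2,\Tilde{S}_2}-E_{\Tilde{S}_2})$, so $E_{\Tilde{S}_2}$ must be \emph{excluded} from the support (taking $b>0$ violates this), while the remaining components of $C_{\Tilde{S}_2}$ --- namely the torus-invariant exceptional divisors extracted by the flips $S_1\dashrightarrow S_{12}$, which by Lemma~\ref{lemma:G_m_invariant_divisors_on_P(a,b,c)} are lc places of $(S_1,C_{S_1}+L_0)$ and hence components of $C_{S_{12}}$ --- must be \emph{included}. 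The paper's construction is $D_{\Tilde{S}_{12}}=g_1^*L_{1,\Tilde{S}_1}+\epsilon(L_{0,\Tilde{S}_{12}}+M_{\Tilde{S}_{12}})$ pushed forward by $g_2$, precisely to hit this support. Second, your assertion that $\Tilde{S}_{12}$ and $\Tilde{S}_2$ are again toric is unjustified: the flips extract divisors centered at arbitrary points of $L_0$, and only those centered at $[0,1,0]$ or $[0,0,1]$ are known to be torus-invariant (the $N_{S_{12}}$ components in the paper's proof sit over general points of $L_0$). So the Mori-cone-of-a-toric-surface argument is not available on $\Tilde{S}_{12}$ or $\Tilde{S}_2$, and the nefness check has to be done curve by curve as in the paper.

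Third, and most seriously, the argument for the last bullet is false. If $D_{\Tilde{S}_2}$ is big and nef but \emph{not} ample, there is an irreducible curve $F$ with $D_{\Tilde{S}_2}\cdot F=0$, and for any ample $B_{\Tilde{S}_2}$ one gets $(D_{\Tilde{S}_2}-\epsilon B_{\Tilde{S}_2})\cdot F=-\epsilon\,B_{\Tilde{S}_2}\cdot F<0$, so $D_{\Tilde{S}_2}-\epsilon B_{\Tilde{S}_2}$ is not even nef; openness of the ample cone only applies when $D_{\Tilde{S}_2}$ is already in its interior. Any effective $B_{\Tilde{S}_2}$ with $D_{\Tilde{S}_2}-\epsilon B_{\Tilde{S}_2}$ ample must contain every such null curve $F$ in its support, so the real issue --- which is the whole point of the third bullet and occupies Step~3 of the paper's proof --- is to show that no curve $F$ with $D_{\Tilde{S}_2}\cdot F=0$ passes through $y_2=L_{2,\Tilde{S}_2}\cap E_{\Tilde{S}_2}$. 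The paper does this by tracing such an $F$ back through $g_2$ and $g_1$ to a fiber of the fibration induced by the semiample divisor $L_{1,\Tilde{S}_1}$, forcing $F=L_{2,\Tilde{S}_2}$, and then ruling that out using the fact that $C_{\Tilde{S}_2}$ is a circle of rational curves. Your proposal contains no substitute for this step.
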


\begin{proof}
\noindent\textit{Step 1.} We show that $g_{2*}g_1^*L_{1,\Tilde{S}_1}$ is big and nef.

By Lemma~\ref{lemma:construction_of_tilde_Y}, $\Tilde{S}_1$ is a toric variety whose toric fan is spanned by four vectors: $(0,1), (1,0), (p,q), (-p,-q)$. This implies that $L_{1,\Tilde{S}_1}^2 = 0$, so $g_{2*}g_1^*L_{1,\Tilde{S}_1}$ is nef. Since $g_2$ contracts $L_{0, \Tilde{S}_{12}}$ and 
\[
g_1^*L_{1,\Tilde{S}_1}\cdot L_{0,\Tilde{S}_{12}} = L_{1,\Tilde{S}_1}\cdot L_{0,\Tilde{S}_1} >0,
\]
we can write
\[
g_2^*g_{2*}g_1^*L_{1,\Tilde{S}_1} = g_1^*L_{1,\Tilde{S}_1} + cL_{0, \Tilde{S}_{12}}
\]
for some $c>0$. As a result,
\[
(g_{2*}g_1^*L_{1,\Tilde{S}_1})^2 = (g_1^*L_{1,\Tilde{S}_1} + cL_{0, \Tilde{S}_{12}})^2 = cg_1^*L_{1,\Tilde{S}_1}\cdot L_{0,\Tilde{S}_{12}} > 0.
\]
This shows that $g_{2*}g_1^*L_{1,\Tilde{S}_1}$ is big, as desired.\\

\noindent\textit{Step 2.} We show that there exists a nef divisor $D_{\Tilde{S}_{12}}$ on $\Tilde{S}_{12}$ whose support is $\supp(C_{\Tilde{S}_{12}} - L_{2,\Tilde{S}_{12}} - E_{\Tilde{S}_{12}})$.

Since $(X,C)$ is a normal crossing pair in a neighborhood of $x$, $C_{S_1}, C_{S_{12}}, C_{S_2}$, and $C_{S_3}$ are integral divisors on $S_1, S_{12}, S_2$, and $S_3$. Let $F$ be an exceptional divisor of the $\GG_m$-equivariant morphism $f_1: S_{12}\to S_1$. Then the center of $F$ on $S_1$ is contained in $L_0$. If the center of $F$ on $S_1$ is $[0,1,0]$ or $[0,0,1]$, then $F$ is torus-invariant by Lemma~\ref{lemma:G_m_invariant_divisors_on_P(a,b,c)}, which implies that $F$ is a log canonical place of $(S_1, C_{S_1} + L_0)$. Since
\[
f_1^*(K_{S_1} + C_{S_1} + L_0) = K_{S_{12}} + C_{S_{12}} + L_{0, S_{12}},
\]
$F$ is contained in $\supp(C_{S_{12}})$. If the center of $F$ on $S_1$ is not $[0,1,0]$ or $[0,0,1]$, then $F$ is not a log canonical place of $(S_{12}, C_{S_{12}} + L_{0, S_{12}})$ and hence $F\not\subseteq \supp(C_{S_{12}})$ (because $C_{S_{12}}$ is an integral divisor). Therefore, $C_{S_{12}}$ is the sum of $L_{1,S_{12}}, L_{2,S_{12}}$, and all torus-invariant exceptional divisors of $f_1$. Write
\[
f_1^*L_{0} = L_{0, S_{12}} + M_{S_{12}} + N_{S_{12}},
\]
where the center of every component of ${M}_{S_{12}}$ on $S_1$ is contained in $\{[0,1,0],[0,0,1]\}$, and the center of every component of $N_{S_{12}}$ on $S_1$ is contained in $L_0\setminus \{[0,1,0],[0,0,1]\}$.  Then $\supp({M}_{S_{12}})\subseteq \supp(C_{S_{12}})$ and none of the components of $N_{S_{12}}$ is contained in $\supp(C_{S_{12}})$. Furthermore, we can write
\[
g_1^*L_{0,\Tilde{S}_1} = L_{0,\Tilde{S}_{12}} + M_{\Tilde{S}_{12}} + N_{\Tilde{S}_{12}},
\]
where $M_{\Tilde{S}_{12}}$ and $N_{\Tilde{S}_{12}}$ are the strict transforms of $M_{S_{12}}$ and $N_{S_{12}}$.

Define a divisor $D_{\Tilde{S}_{12}}$ on $\Tilde{S}_{12}$ by
\[
D_{\Tilde{S}_{12}} = g_1^*L_{1,\Tilde{S}_1} + \epsilon (L_{0,\Tilde{S}_{12}}+M_{\Tilde{S}_{12}})
\]
for a sufficiently small $\epsilon > 0$. Then $\supp(D_{\Tilde{S}_{12}}) = \supp(C_{\Tilde{S}_{12}} - L_{2,\Tilde{S}_{12}} - E_{\Tilde{S}_{12}})$. Let $F_{\Tilde{S}_{12}}$ be any irreducible curve on $\Tilde{S}_{12}$. We have the following cases:
\begin{itemize}
\item $F_{\Tilde{S}_{12}}\subseteq \supp(M_{\Tilde{S}_{12}})$. Then $F_{\Tilde{S}_{12}}$ is disjoint from $\supp(N_{\Tilde{S}_{12}})$. Thus,
\[
D_{\Tilde{S}_{12}} \cdot F_{\Tilde{S}_{12}} = g_1^*(L_{1,\Tilde{S}_1} + \epsilon L_{0,\Tilde{S}_1}) = 0.
\]
\item $F_{\Tilde{S}_{12}} = L_{0,\Tilde{S}_{12}}$. Then
\[
D_{\Tilde{S}_{12}} \cdot F_{\Tilde{S}_{12}}  = L_{1,\Tilde{S}_1}\cdot L_{0,\Tilde{S}_1} + \epsilon (L_{0,\Tilde{S}_{12}}+M_{\Tilde{S}_{12}}) \cdot L_{0,\Tilde{S}_2} > 0
\]
since $\epsilon$ is sufficiently small.
\item $F\not\subseteq\supp(L_{0,\Tilde{S}_{12}} + M_{\Tilde{S}_{12}})$. Then $D_{\Tilde{S}_{12}}\cdot F_{\Tilde{S}_{12}} \geq 0$ by the nefness of $L_{1,\Tilde{S}_1}$.
\end{itemize}
As a result, $D_{\Tilde{S}_{12}}$ is nef. \\

\noindent\textit{Step 3.} In this step, we finish the proof of Lemma~\ref{lemma:big_and_nef_divisor_on_tilde_S2}.

Let $D_{\Tilde{S}_2} = g_{2*}D_{\Tilde{S}_{12}}$. By step 2, $D_{\Tilde{S}_2}$ is nef and $\supp(D_{\Tilde{S}_2}) = \supp(C_{\Tilde{S}_2} - L_{2,\Tilde{S}_2} - E_{\Tilde{S}_2})$. By step 1, $D_{\Tilde{S}_2} = g_{2*}g_1^*L_{1,\Tilde{S}_1} + \epsilon M_{\Tilde{S}_2}$ is big. 
It suffices to prove the following statement: Suppose $F_{\Tilde{S}_2}$ is an irreducible curve on $\Tilde{S}_2$ such that $F_{\Tilde{S}_2}\cdot D_{\Tilde{S}_2} = 0$, then $F_{\Tilde{S}_2}$ does not contain the point $y_2:= L_{2,\Tilde{S}_2}\cap E_{\Tilde{S}_2}$.

Assume that $y_2\in F_{\Tilde{S}_2}$. Since $g_2$ is an isomorphism in a neighborhood of $y_2$ (by Lemma~\ref{lemma:compatibility_of_MMP_in_step_2}), the strict transform $F_{\Tilde{S}_{12}}$ of $F_{\Tilde{S}_2}$ on $\Tilde{S}_{12}$ is an irreducible curve which contains $y_{12}$, the preimage of $y_2$. Then
\[
F_{\Tilde{S}_{12}}\cdot D_{\Tilde{S}_{12}} \leq g_2^*F_{\Tilde{S}_{2}} \cdot D_{\Tilde{S}_{12}} = F_{\Tilde{S}_2} \cdot D_{\Tilde{S}_{2}} = 0.
\]
Thus, $F_{\Tilde{S}_{12}}\cdot D_{\Tilde{S}_{12}} = 0$. Because $y_{12}\in F_{\Tilde{S}_{12}}$, $F_{\Tilde{S}_{12}}$ is not contained in the support of $L_{0, \Tilde{S}_{12}} + M_{\Tilde{S}_{12}}$. Then
\[
0 = F_{\Tilde{S}_{12}}\cdot D_{\Tilde{S}_{12}} = F_{\Tilde{S}_{12}}\cdot (g_1^*L_{1,\Tilde{S}_1} + \epsilon(L_{0,\Tilde{S}_{12}}+M_{\Tilde{S}_{12}}))
\]
implies that
\[
F_{\Tilde{S}_{12}}\cdot g_1^*L_{1,\Tilde{S}_1} = F_{\Tilde{S}_{12}} \cdot (L_{0, \Tilde{S}_{12}} + M_{\Tilde{S}_{12}})  =0.
\]
Thus, $F_{\Tilde{S}_1} \cdot L_{1,\Tilde{S}_1} = 0$, where $F_{\Tilde{S}_1} = g_{1*}F_{\Tilde{S}_{12}}$. Since the semiample divisor $L_{1,\Tilde{S}_1} $ induces a fibration $\Tilde{S}_1\to \PP^1$, $F_{\Tilde{S}_1}$ must be a fiber. Since $y_1:=g_1(y_{12})\in F_{\Tilde{S}_1}$ and $y_1 = L_{2,\Tilde{S}_1}\cap E_{\Tilde{S}_1}$, we must have $F_{\Tilde{S}_1} = L_{2,\Tilde{S}_1}$. Hence, $F_{\Tilde{S}_2} = L_{2,\Tilde{S}_2}$.

On the other hand, since $C_{S_{12}}$ is the sum of $L_{1,S_{12}}, L_{2,S_{12}}$, and all torus-invariant exceptional divisors of $f_1:S_{12}\to S_1$,  $C_{S_{12}} + L_{0,S_{12}}$ consists of a circle of rational curves on $S_{12}$. This implies that $C_{\Tilde{S}_2}$ also consists of a circle of rational curves on $\Tilde{S}_2$. Therefore,
\[
(C_{\Tilde{S}_2} - L_{2,\Tilde{S}_2}-E_{\Tilde{S}_2}) \cdot L_{2,\Tilde{S}_2} > 0,
\]
which implies that
\[
D_{\Tilde{S}_2}\cdot F_{\Tilde{S}_2} > 0.
\]
This is a contradiction. Thus, $y_2\not\in F_{\Tilde{S}_2}$ and the proof is complete.
\end{proof}

\begin{lemma}\label{lemma:big_and_nef_divisor_on_tilde_S3}
There exists an effective $\QQ$-divisor $D_{\Tilde{S}_3}$ on $\Tilde{S}_3$ satisfying the following properties:
\begin{itemize}
    \item $D_{\Tilde{S}_3}$ is big and nef.
    \item $D_{\Tilde{S}_3}$ is fully supported on $\supp(C_{\Tilde{S}_3} - L_{2,\Tilde{S}_3} - E_{2,\Tilde{S}_3})$.
    \item There exists an effective divisor $B_{\Tilde{S}_3}$ on $\Tilde{S}_3$ such that $L_{2,\Tilde{S}_3}\cap E_{\Tilde{S}_3}$ is not contained in $\text{supp}(B_{\Tilde{S}_3})$ and
    $D_{\Tilde{S}_3} - \epsilon B_{\Tilde{S}_3}$ is ample for any sufficiently small $\epsilon > 0$.
\end{itemize}
\end{lemma}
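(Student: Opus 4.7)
The plan is to construct $D_{\Tilde{S}_3}$ and $B_{\Tilde{S}_3}$ as the birational transforms of $D_{\Tilde{S}_2}$ and $B_{\Tilde{S}_2}$ along the birational map $\psi: \Tilde{S}_2 \dashrightarrow \Tilde{S}_3$ induced by the MMP of Lemma~\ref{lemma:compatibility_of_MMP_in_step_3} on central fibers. Concretely, we take a smooth common resolution $W$ with birational morphisms $\sigma_2: W \to \Tilde{S}_2$ and $\sigma_3: W \to \Tilde{S}_3$ and set $D_{\Tilde{S}_3} := \sigma_{3*}\sigma_2^* D_{\Tilde{S}_2}$ and $B_{\Tilde{S}_3} := \sigma_{3*}\sigma_2^* B_{\Tilde{S}_2}$.

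The first step is to establish the relevant properties of $\psi$. By repeating the argument of Lemma~\ref{lemma:MMP_isomorphism_away_from_L0} applied to the MMP of Lemma~\ref{lemma:compatibility_of_MMP_in_step_3} (choosing a general ample $G \sim_\QQ -K_X$ whose strict transform $\mathcal{G}_{\Tilde{\mathcal{X}}'}$ avoids the closure of $\{x\}\times(\AA^1\setminus\{0\})$), the MMP $\Tilde{\mathcal{X}}' \dashrightarrow \Tilde{\mathcal{X}}^m$ is an isomorphism near this closure, so $\psi$ is an isomorphism in an analytic neighborhood of $y_2 := L_{2,\Tilde{S}_2}\cap E_{\Tilde{S}_2}$, identifying it with $y_3 := L_{2,\Tilde{S}_3}\cap E_{\Tilde{S}_3}$. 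Moreover, since the MMP is a $-K$-MMP and $-K_{\Tilde{\mathcal{X}}'}\sim_{\QQ,\AA^1} \mathcal{C}_{\Tilde{\mathcal{X}}'}$ in step (3), any curve contracted on the central fiber must lie in $\supp(C_{\Tilde{S}_2})$, and neither $L_{2,\Tilde{S}_2}$ nor $E_{\Tilde{S}_2}$ is contracted.

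The verification of most properties is then routine. For support: since $\psi$ contracts only components of $C_{\Tilde{S}_2}$ distinct from $L_{2,\Tilde{S}_2}, E_{\Tilde{S}_2}$, and any curve extracted on $\Tilde{S}_3$ arising from central-fiber modifications in the 3-fold MMP lies in $\supp(C_{\Tilde{S}_3})$ (again by the $\mathcal{C}_{\Tilde{\mathcal{X}}'}$ linear equivalence), we obtain $\supp(D_{\Tilde{S}_3}) = \supp(C_{\Tilde{S}_3} - L_{2,\Tilde{S}_3} - E_{\Tilde{S}_3})$. Nefness of $D_{\Tilde{S}_3}$ follows from the projection formula: for any curve $C \subseteq \Tilde{S}_3$,
\[
D_{\Tilde{S}_3}\cdot C = \sigma_2^* D_{\Tilde{S}_2}\cdot \sigma_3^* C = D_{\Tilde{S}_2}\cdot \sigma_{2*}\sigma_3^* C \geq 0
\]
since $D_{\Tilde{S}_2}$ is nef and $\sigma_{2*}\sigma_3^* C$ is effective. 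Bigness follows from $(D_{\Tilde{S}_3})^2 = \sigma_2^* D_{\Tilde{S}_2}\cdot \sigma_3^*\sigma_{3*}\sigma_2^* D_{\Tilde{S}_2} \geq (D_{\Tilde{S}_2})^2 > 0$ after absorbing the effective $\sigma_3$-exceptional difference. The fact that $y_3 \notin \supp(B_{\Tilde{S}_3})$ is immediate from the local isomorphism near $y_2 \leftrightarrow y_3$.

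The main obstacle is the ampleness of $D_{\Tilde{S}_3} - \epsilon B_{\Tilde{S}_3}$, which I would verify by the Nakai--Moishezon criterion. The self-intersection is positive by the same projection-formula calculation as for bigness. For a curve $C \subseteq \Tilde{S}_3$,
\[
(D_{\Tilde{S}_3} - \epsilon B_{\Tilde{S}_3})\cdot C = (D_{\Tilde{S}_2} - \epsilon B_{\Tilde{S}_2})\cdot \sigma_{2*}\sigma_3^* C,
\]
and the right-hand side is strictly positive whenever $\sigma_{2*}\sigma_3^* C \neq 0$ by ampleness of $D_{\Tilde{S}_2} - \epsilon B_{\Tilde{S}_2}$. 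The only curves $C$ for which $\sigma_{2*}\sigma_3^* C = 0$ are those extracted by $\psi^{-1}$, i.e., curves on $\Tilde{S}_3$ whose strict transform in $W$ is $\sigma_2$-exceptional. Such curves come from flip-like modifications of the $3$-fold MMP on the central fiber, and by the equivalence $-K_{\Tilde{\mathcal{X}}'}\sim_{\QQ,\AA^1}\mathcal{C}_{\Tilde{\mathcal{X}}'}$ they must be components of $C_{\Tilde{S}_3}$ distinct from $L_2$ and $E$ (hence lie in $\supp(D_{\Tilde{S}_3})$); for these one instead computes the intersection directly using the explicit fact that $D_{\Tilde{S}_3}$ contains such a curve with positive multiplicity, together with the fact that $\psi^{-1}$ is an isomorphism near $y_3$ so the extracted curves do not pass through $y_3$ and therefore do not appear in $B_{\Tilde{S}_3}$. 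This last bookkeeping is the delicate part: it requires a careful analysis of how the $3$-fold MMP steps restrict to the central fiber surface, which I expect to be the technical heart of the argument.
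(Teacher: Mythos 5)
Your construction of $D_{\Tilde{S}_3}$ as the pushforward of the pullback of $D_{\Tilde{S}_2}$ through a common resolution is exactly the paper's (the paper writes $D_{\Tilde{S}_3}=g_{4*}g_3^*D_{\Tilde{S}_2}$ through $\Tilde{S}_{23}$), and your verifications of bigness, nefness, and the fact that $y_3:=L_{2,\Tilde{S}_3}\cap E_{\Tilde{S}_3}$ avoids $\supp(B_{\Tilde{S}_3})$ are sound. Where you diverge is the third bullet: the paper does not transport $B_{\Tilde{S}_2}$; instead it reruns Step 3 of the proof of Lemma~\ref{lemma:big_and_nef_divisor_on_tilde_S2} on $\Tilde{S}_3$ (using the fibration induced by $L_{1,\Tilde{S}_1}$ and the fact that $C_{\Tilde{S}_3}$ is a circle of rational curves) to show that no irreducible curve $F$ with $F\cdot D_{\Tilde{S}_3}=0$ passes through $y_3$, and then obtains $B_{\Tilde{S}_3}$ from semiampleness of the big and nef divisor $D_{\Tilde{S}_3}$, as in the proof of Proposition~\ref{prop: special_divisor_dP_surface_criterion}. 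Your route can be made to work, but the two claims you make about the curves extracted by $\psi$ (the flipped curves of the threefold MMP restricted to the central fiber) are wrong as stated, and these are precisely the curves on which everything hinges.

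First, the equivalence $-K_{\Tilde{\mathcal{X}}'}\sim_{\QQ,\AA^1}\mathcal{C}_{\Tilde{\mathcal{X}}'}$ only forces the flipping (contracted) curves into $\supp(\mathcal{C})$, since those intersect $-K$ negatively; a flipped (extracted) curve intersects $\mathcal{C}$ positively, which does not place it in the support. That the extracted curves are nonetheless components of $C_{\Tilde{S}_3}$ requires the $\GG_m$-invariance and lc-place argument of Lemma~\ref{lemma:G_m_invariant_divisors_on_P(a,b,c)} and Step 2 of Lemma~\ref{lemma:big_and_nef_divisor_on_tilde_S2}, so your support claim has a gap. Second, and more seriously, for an extracted curve $\gamma^+$ the observation that ``$D_{\Tilde{S}_3}$ contains $\gamma^+$ with positive multiplicity'' does not give $(D_{\Tilde{S}_3}-\epsilon B_{\Tilde{S}_3})\cdot\gamma^+>0$: a big and nef divisor routinely has zero intersection with curves in its own support, and this is exactly the failure mode you must exclude. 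The correct completion of your argument is to note that if $\gamma\subseteq\Tilde{S}_2$ is the flipping curve corresponding to $\gamma^+$, then $\gamma$ is contracted to a point of $\gamma^+$ and $\gamma^+$ is extracted from a point of $\gamma$, so the strict transform of $\gamma$ appears with positive coefficient in $\sigma_3^*\gamma^+$; hence $\sigma_{2*}\sigma_3^*\gamma^+$ is a \emph{nonzero} effective $1$-cycle containing $\gamma$, and your projection-formula identity together with the ampleness of $D_{\Tilde{S}_2}-\epsilon B_{\Tilde{S}_2}$ then gives the required strict positivity. With that repair (and the toric argument for the support statement), your proof closes.
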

\begin{proof}
Let $\Tilde{S}_1\leftarrow \Tilde{S}_{23}\to \Tilde{S}_2$ resolve the indeterminacy locus of the birational map $\Tilde{S}_2\dashrightarrow \Tilde{S}_3$. Denote $g_3: \Tilde{S}_{23}\to \Tilde{S}_2$ and $g_4:\Tilde{S}_{23}\to \Tilde{S}_3$. 

Let $D_{\Tilde{S}_2}$ be the divisor constructed in Lemma~\ref{lemma:big_and_nef_divisor_on_tilde_S2}. Define $D_{\Tilde{S}_3} = g_{4*}g_3^*D_{\Tilde{S}_2}$. Then $D_{\Tilde{S}_3}$ is big and nef. Furthermore, similar to step 3 of the proof of Lemma~\ref{lemma:big_and_nef_divisor_on_tilde_S2}, we can show that
\begin{itemize}
\item $C_{\Tilde{S}_3}$ consists of a circle of rational curves on $\Tilde{S}_3$,
\item $\supp(D_{\Tilde{S}_3}) = \supp(C_{\Tilde{S}_3} - L_{2,\Tilde{S}_3}-E_{\Tilde{S}_3})$, and
\item there does not exist an irreducible curve $F_{\Tilde{S}_3}$ on $\Tilde{S}_3$ such that $F_{\Tilde{S}_3}\cdot D_{\Tilde{S}_3} = 0$ and the point $L_{2,\Tilde{S}_3}\cap E_{\Tilde{S}_3}$ is contained in $F_{\Tilde{S}_3}$.
\end{itemize}
This completes the proof.
\end{proof}

\begin{proof}[Proof of Theorem~\ref{thm:qdlt_Fano_type_model_on_special_degeneration}(f)]
Let $D_{\Tilde{S}_3}$ and $B_{\Tilde{S}_3}$ be the divisors constructed in Lemma~\ref{lemma:big_and_nef_divisor_on_tilde_S3}. For sufficiently small $\epsilon, \epsilon' > 0$, we can write
\[
K_{\Tilde{S}_3} + (C_{\Tilde{S}_3} - L_{2,\Tilde{S}_3} - E_{\Tilde{S}_3} - \epsilon'D_{\Tilde{S}_3}) + \epsilon\epsilon' B_{\Tilde{S}_3} + L_{2,\Tilde{S}_3} + E_{\Tilde{S}_3} \sim_{\QQ} - \epsilon'(D_{\Tilde{S}_3} - \epsilon B_{\Tilde{S}_3}), 
\]
which is anti-ample. Let $\Gamma_{\Tilde{S}_3} = (C_{\Tilde{S}_3} - L_{2,\Tilde{S}_3} - E_{\Tilde{S}_3} - \epsilon'D_{\Tilde{S}_3}) + \epsilon\epsilon' B_{\Tilde{S}_3}\geq 0$. By Lemma~\ref{lemma:big_and_nef_divisor_on_tilde_S3}, $\supp(D_{\Tilde{S}_3}) = \supp(C_{\Tilde{S}_3} - L_{2,\Tilde{S}_3} - E_{\Tilde{S}_3})$ and $\supp(B_{\Tilde{S}_3})$ does not contain $L_{2,\Tilde{S}_3}\cap E_{\Tilde{S}_3}$. By Theorem~\ref{thm:qdlt_Fano_type_model_on_special_degeneration}(e), $(S_3, C_{S_3})$ is a log canonical pair which is normal crossing in a neighborhood of $x_3$. Therefore, the pair $(\Tilde{S}_3, \Gamma_{\Tilde{S}_3} + L_{2,\Tilde{S}_3} + E_{\Tilde{S}_3})$ is qdlt, log Fano, and $\lfloor \Gamma_{\Tilde{S}_3} + L_{2,\Tilde{S}_3} + E_{\Tilde{S}_3}\rfloor =  L_{2,\Tilde{S}_3} + E_{\Tilde{S}_3}$. This implies that $(\Tilde{S}_3, L_{2,\Tilde{S}_3} + E_{\Tilde{S}_3})$ is a qdlt Fano type model of $X'$. 

Similarly, $(\Tilde{S}_3, L_{1,\Tilde{S}_3} + E_{\Tilde{S}_3})$ is also a qdlt Fano type model of $X'$, and the proof is complete.
\end{proof}

\subsection{Local Structure of the Dual Complex of $(X',C')$}
Let $r$ be a positive integer such that $rK_X$ and $rK_{X'}$ are Cartier. Let $R_m = H^0(-mK_X)$ and $R_m' = H^0(-mK_{X'})$ for $m\in r\NN$.  By Proposition~\ref{prop:rees_construction}, there is an isomorphism of graded $\CC$-algebras
\[
\bigoplus_{m\in r \mathbb{N}}R_m' \cong \bigoplus_{m\in r \mathbb{N}}  \gr_E^\bullet R_m.
\]
In fact, this isomorphism is $\GG_m$-equivariant, where the $\GG_m$ action on $H^0(X', -mK_{X'})$ is induced from the $\GG_m$ action on $X'$, and the $\GG_m$ action on $\gr_E^i R_m$ has weight $-i$ for each $i\geq 0$. 

For each $t\in (0,\infty)$, we denote $v_t\in \text{QM}(X,C)$ the quasi-monomial valuation centered at $x\in X$ with weight $(1,t)$ along the two branches of $C$.
By Theorem~\ref{thm:qdlt_Fano_type_model_on_special_degeneration}(e), in a neighborhood of $x_3$, $(S_3, C_{S_3}) = (S_3, L_{1,S_3} + L_{2,S_3})$ is a normal crossing pair. Define $\overline{v_t}\in \text{QM}(S_3, L_{1,S_3} + L_{2,S_3})$ to be the quasi-monomial valuation at $x_3$ with weight $(1,t)$ along $L_{1,S_3}$ and $L_{2,S_3}$. Let $t_0 = \frac{q}{p}$. Then $\ord_E = p\cdot v_{t_0}$ as valuations on $X$. By Theorem~\ref{thm:qdlt_Fano_type_model_on_special_degeneration}(c), $\ord_{E_{\Tilde{S}_3}} = p\cdot \overline{v_{t_0}}$ as valuations on $S_3$.

\begin{definition}
For each $f\in R_m$ with $\ord_E(f) = i$, we denote $\overline{f}\in \gr^i_ER_m$ to be the image of $f$. Using the isomorphism $H^0(X', -mK_{X'}) = R_m' \cong \gr_E^\bullet R_m,$ we may also identify $\overline{f}$ with a section of $|-mK_{X'}|$.
\end{definition}

The main theorem of this subsection is
\begin{theorem}\label{thm:isomorphism_of_graded_rings_for_special_degenerations}
    There exists an open interval $I$ containing $t_0$ such that for all $t\in I$, \[\bigoplus_{m\in r\mathbb{N}}\gr_{v_t}^\bullet R_m\cong \bigoplus_{m\in r\mathbb{N}} \gr_{\overline{v_t}}^\bullet R_m' \]
    as finitely generated graded $\CC$-algebras.
\end{theorem}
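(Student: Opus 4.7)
The plan is first to use the qdlt Fano type models of $X'$ produced in Theorem~\ref{thm:qdlt_Fano_type_model_on_special_degeneration}(f) to establish finite generation and constancy (up to isomorphism) of $\bigoplus_m\gr_{\overline{v_t}}^\bullet R_m'$ for $t$ in each open half-interval around $t_0$, and then to build a natural isomorphism onto $\bigoplus_m\gr_{v_t}^\bullet R_m$ through the Rees identification of Proposition~\ref{prop:rees_construction}.

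Since $\ord_{E_{\Tilde{S}_3}}=p\cdot\overline{v_{t_0}}$ by Theorem~\ref{thm:qdlt_Fano_type_model_on_special_degeneration}(c), the valuation $\overline{v_t}$ on $\Tilde{S}_3$, for $t$ slightly smaller (resp.\ larger) than $t_0$, has center containing the generic point of $L_{1,\Tilde{S}_3}\cap E_{\Tilde{S}_3}$ (resp.\ $L_{2,\Tilde{S}_3}\cap E_{\Tilde{S}_3}$): in the toric chart of the $(p,q)$-weighted blow-up, the ray $(1,t)$ lies in the subcone bounded by $(1,0)$ and $(p,q)$ when $t<t_0$, and bounded by $(p,q)$ and $(0,1)$ when $t>t_0$. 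Applying Theorem~\ref{thm:qdlt_fano_type_model_and_higher_rank_degeneration} to the two qdlt Fano type models $(\Tilde{S}_3, L_{i,\Tilde{S}_3}+E_{\Tilde{S}_3})\to X'$, the ring $\bigoplus_m\gr_{\overline{v_t}}^\bullet R_m'$ is finitely generated and isomorphic to a fixed graded algebra as $t$ varies in each half-interval.

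Next I compare the two filtrations explicitly. Choose local analytic coordinates $x_1,x_2$ at $x\in X$ along the two branches of $C$; Theorem~\ref{thm:qdlt_Fano_type_model_on_special_degeneration}(d) together with the $\GG_m$-equivariance of the Rees construction provides compatible local coordinates $y_1,y_2$ at $x_3\in S_3$ along $L_{1,S_3},L_{2,S_3}$. For $f=\sum c_{ij}x_1^ix_2^j\in R_m$, its initial form under the Rees isomorphism of Proposition~\ref{prop:rees_construction} has local expansion $\bar f=\sum_{(i,j)\in A(f)}c_{ij}y_1^iy_2^j\in R_m'$, where $A(f)=\{(i,j):c_{ij}\neq 0,\ pi+qj=\ord_E(f)\}$. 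Hence $v_t(f)=\min\{i+tj:c_{ij}\neq 0\}$ and $\overline{v_t}(\bar f)=\min_{A(f)}(i+tj)$, and a short argument on the Newton polygon of $f$ shows that the $v_t$-argmin is stable on an open interval about $t_0$ and always lies in $A(f)$, giving $v_t(f)=\overline{v_t}(\bar f)$ on that interval. The assignment $[f]\mapsto[\bar f]$ thus defines a graded $\CC$-algebra homomorphism $\Phi\colon\bigoplus_m\gr_{v_t}^\bullet R_m\to\bigoplus_m\gr_{\overline{v_t}}^\bullet R_m'$.

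The main obstacle is that the interval on which $v_t(f)=\overline{v_t}(\bar f)$ a priori depends on $f$; to produce a uniform $I$, I use the finite generation from the first step. Pick generators $\bar g_1,\ldots,\bar g_N$ of $\bigoplus_m\gr_{\overline{v_t}}^\bullet R_m'$ with lifts $g_k\in R_{m_k}$, and choose an open interval $I\ni t_0$ on which $v_t(g_k)=\overline{v_t}(\bar g_k)$ holds for every $k$. Multiplicativity of the valuations and additivity of initial forms then force the equality $v_t(f)=\overline{v_t}(\bar f)$ for every $f$ in the subring generated by the $g_k$, and by a standard density argument on all of $R$. The map $\Phi$ is surjective because its image contains a generating set of the target, and injective because a nonzero kernel element would contradict the equality on generators; this yields the desired isomorphism for all $t\in I$.
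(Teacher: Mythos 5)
Your first step (finite generation and constancy of $\bigoplus_m\gr_{\overline{v_t}}^\bullet R_m'$ on each half-interval via the two qdlt Fano type models, and the local Newton-polygon comparison giving $v_t(f)=\overline{v_t}(\bar f)$ on an $f$-dependent interval) matches the paper's Step 1 and its Lemma~\ref{lemma:v_t_and_bar_v_t_are_the_same_for_a_single_f}. The gap is in how you build the isomorphism. The assignment $f\mapsto\bar f$ takes the initial form with respect to the $\ord_E$-filtration, so it is not additive across different $\ord_E$-levels, and consequently $[f]\mapsto[\bar f]$ does not define a linear map, let alone a $\CC$-algebra homomorphism $\Phi$ on $\gr_{v_t}^\bullet R$. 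Concretely, with $p=q=1$ and $t>1$, take sections whose local expansions are $f_1=x_1^3$ and $f_2=x_2^2$: then $v_t(f_1)=v_t(f_2)=v_t(f_1+f_2)=3$ when $t=3/2$, but $\ord_E(f_1)=3$, $\ord_E(f_2)=2$, so $\overline{f_1+f_2}=\bar f_2$ while $\bar f_1+\bar f_2$ has an extra nonzero component in $\gr_E^3$; hence $[\overline{f_1+f_2}]\neq[\bar f_1]+[\bar f_2]$ in $\gr_{\overline{v_t}}^3R_m'$. For the same reason, your claim that "multiplicativity of the valuations and additivity of initial forms" propagate the equality $v_t(f)=\overline{v_t}(\bar f)$ from the generators to every element of $R$ does not go through: cancellation among monomials in the $g_k$ can change the $\ord_E$-initial form, and the "standard density argument" is not available (these are discrete, not topological, statements). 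The uniform interval and the injectivity/surjectivity assertions all rest on this unproved propagation, which is essentially equivalent to the theorem itself.

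The paper avoids both problems by never mapping $\gr_{v_t}^\bullet R$ and $\gr_{\overline{v_t}}^\bullet R'$ to each other directly. Instead it introduces the monoid $M\subseteq\NN^3$ generated by the exponent data $(i_k,j_k,m_k)$ of the generators' leading monomials and defines homomorphisms $\CC[M]\to\gr_{\overline{v_t}}^\bullet R'$ and $\CC[M]\to\gr_{v_t}^\bullet R$ only on monomials in the $\bar f_k$ (resp.\ $f_k$), where multiplicativity of valuations applies and the leading term of any monomial product is a single monomial $u_1^{i'}u_2^{j'}$; well-definedness and injectivity then follow from the distinctness of these leading monomials, surjectivity onto $\gr_{\overline{v_t}}^\bullet R'$ is by choice of generators, and surjectivity onto $\gr_{v_t}^\bullet R$ is obtained by the dimension count $\dim_\CC\CC[M]_m=\dim_\CC R_m'=\dim_\CC R_m$ coming from flatness of the test configuration. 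You would need to replace your third and fourth paragraphs with an argument of this type (or otherwise prove additivity of $\Phi$ and the uniform equality $v_t(f)=\overline{v_t}(\bar f)$, which I do not see how to do directly).
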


\begin{lemma}\label{lemma:v_t_and_bar_v_t_are_the_same_for_a_single_f}
Let $f\in R_m$ for some $m\in r\cdot \mathbb{N}$. For every $t\in (0,\infty)$, we have
\[
v_t(f)\leq \overline{v_t}(\overline{f}).
\]
Furthermore, there exists $t_1 < t_0$ and $t_2 > t_0$ (depending on $f$) such that
\begin{itemize}
\item $v_t(f)=  \overline{v_t}(\overline{f})$ for all $t\in [t_1, t_2]$.
\item There exists $(i_1,j_1)\in \mathbb{N}^2$ such that $v_t(f) = i_1+tj_1$ for all $t\in [t_1, t_0]$.
\item There exists $(i_2,j_2)\in \mathbb{N}^2$ such that $v_t(f) = i_2+tj_2$ for all $t\in [t_0, t_2]$.
\end{itemize}
\end{lemma}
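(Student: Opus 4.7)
The plan is to reduce the lemma to an elementary computation on the Newton polygon of the local expansion of $f$ at $x$. First, I would choose analytic coordinates $(x_1,x_2)$ at $x\in X$ in which the two branches of $C$ are cut out by $x_1=0$ and $x_2=0$, and trivialize $-mK_X$ locally so that $f$ is represented by a power series $f=\sum c_{ij}x_1^ix_2^j$. By definition of the $(p,q)$-weighted blow-up, $\ord_E(f)=\min\{pi+qj:c_{ij}\neq 0\}=:N$.

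Next, I would transport this picture to $S_3$ using Theorem~\ref{thm:qdlt_Fano_type_model_on_special_degeneration}(d). In the affine chart of $\mathcal{Y}$ containing $x_1$, where $s,x_1,x_2$ are replaced by $\tilde{s},\tilde{x}_1=x_1/\tilde{s}^p,\tilde{x}_2=x_2/\tilde{s}^q$, the pullback of $f$ reads $\sum c_{ij}\tilde{s}^{pi+qj}\tilde{x}_1^i\tilde{x}_2^j$, so $\tilde{s}^{-N}f$ restricts on $S_1=(\tilde{s}=0)$ to $\sum_{pi+qj=N}c_{ij}\tilde{x}_1^i\tilde{x}_2^j$. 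By Theorem~\ref{thm:qdlt_Fano_type_model_on_special_degeneration}(d), this chart is isomorphic to a chart of $\mathcal{X}^m$ containing $x_3$, yielding local coordinates $(\tilde{x}_1,\tilde{x}_2)$ on $S_3$ near $x_3$ in which $L_{1,S_3}$ and $L_{2,S_3}$ are the two coordinate axes. Under the $\GG_m$-equivariant isomorphism $\bigoplus_m R'_m\cong \bigoplus_m \gr_E^\bullet R_m$, the section $\overline{f}$ then has the local expression $\sum_{pi+qj=N}c_{ij}\tilde{x}_1^i\tilde{x}_2^j$ at $x_3$, so $\supp(\overline{f})=\{(i,j):c_{ij}\neq 0,\ pi+qj=N\}$.

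From these descriptions, $v_t(f)=\min\{i+tj:c_{ij}\neq 0\}$ and $\overline{v_t}(\overline{f})=\min\{i+tj:c_{ij}\neq 0,\ pi+qj=N\}$, which immediately yields $v_t(f)\leq \overline{v_t}(\overline{f})$. For the equality on a neighborhood of $t_0$, let $P=\mathrm{conv}(\supp(f))+\RR_{\geq 0}^2$ be the Newton polygon. As $t$ varies in $(0,\infty)$, the minimum of $\phi_t(i,j)=i+tj$ on $P$ is achieved on a face with inward normal $(1,t)$, and this face sweeps along the lower-left boundary of $P$. At $t=t_0=q/p$, the minimizing face $L=P\cap\{pi+qj=N\}$ is either a single vertex or an edge whose lattice endpoints $(i_1,j_1),(i_2,j_2)$ (with $i_1\leq i_2$, $j_1\geq j_2$) both lie in $\supp(f)$, since they are vertices of $P$. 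For $t\in[t_1,t_0]$ determined by the slope of the adjacent edge of $P$, the minimum is achieved at $(i_1,j_1)$ and equals $i_1+tj_1$; for $t\in[t_0,t_2]$ it is achieved at $(i_2,j_2)$ and equals $i_2+tj_2$. The identical analysis applied to $\supp(\overline{f})$, which still contains both endpoints, gives $\overline{v_t}(\overline{f})=v_t(f)$ on $[t_1,t_2]$.

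The only subtle step is the geometric identification of the local expression of $\overline{f}$ on $S_3$ with the initial form of $f$ with respect to $\ord_E$; this is forced by Theorem~\ref{thm:qdlt_Fano_type_model_on_special_degeneration}(d) together with the standard Rees-algebra picture of the weighted blow-up. Once this identification is in hand, the remainder is elementary convex geometry on $P$.
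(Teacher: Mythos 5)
Your proposal is correct and follows essentially the same route as the paper: both identify the local expression of $\overline{f}$ near $x_3$ with the initial form $\sum_{pi+qj=N}c_{ij}u_1^iu_2^j$ of $f$ via Theorem~\ref{thm:qdlt_Fano_type_model_on_special_degeneration}(d), deduce $\overline{v_t}(\overline{f})=\min\{i+tj:c_{ij}\neq 0,\ pi+qj=N\}\geq v_t(f)$, and then observe that for $t$ near $t_0$ the minimum of $i+tj$ over $\supp(f)$ is attained at the two extreme points of the face supported by $(1,t_0)$, which also lie in $\supp(\overline{f})$. Your Newton-polygon phrasing of the endgame is just a more geometric packaging of the paper's choice of the pairs $(i_1,j_1)$ and $(i_2,j_2)$ with extremal $j$ on the line $i+t_0j=\alpha$.
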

\begin{proof}
Let $x_1,x_2$ be the analytic coordinates on $X$ in a neighborhood of $x$, such that $C$ is cut out by $x_1x_2 = 0$. Write $f = \sum_{i,j\in \mathbb{N}}c_{ij} x_1^i x_2^j$. Denote $\alpha = v_{t_0}(f) = \min\{i+t_0j: c_{ij}\neq 0\}$. Let $D$ be the divisor on $X$ corresponding to $f$. Let $\mathcal{D}_{\mathcal{Y}}$ denote the strict transform of $D\times \AA^1$ on $\mathcal{Y}$ and $D_{S_1} = \mathcal{D}_{\mathcal{Y}}|_{S_1}$. We define $\mathcal{D}_{\mathcal{X}^m}$ and $D_{S_3}$ similarly. Then $\mathcal{D}_{\mathcal{X}^m}\sim_{\QQ, \AA^1} -mK_{\mathcal{X}^m}$ and $D_{S_3}\sim_{\QQ} -mK_{S_3}$.

Recall that $S_1 \cong \PP(1,p,q)$ with homogeneous coordinates $u_0,u_1,u_2$. By a local computation near $x_1$, we can write the equation of $D_{S_1}$ as
\[
D_{S_1} = \left(\sum_{\substack{i,j\in \mathbb{N}\\i+t_0j = \alpha}} c_{ij} u_1^iu_2^j = 0\right).
\]
By Theorem~\ref{thm:qdlt_Fano_type_model_on_special_degeneration}(d), $\mathcal{Y}$ and $\mathcal{X}^m$ are isomorphic in a neighborhood of the closure of $\{x\}\times (\AA^1-\{0\})$. Thus, abusing the notation, we may still write $L_{1,S_3} = (u_1 = 0)$ and $L_{2,S_3} = (u_2 = 0)$ in a neighborhood of $x_{3}$. Thus, the local equation of $D_{S_3}$ has the same form as the local equation of $D_{S_1}$. 

Furthermore, by the diagram (\ref{equation: construction_of_special_degeneration_central_fiber}), $S_3$ is the ample model of $X'$. Thus, $D_{S_3}$ is identified with $\overline{f}$ by the isomorphism $H^0(S_3, -mK_{S_3})\cong H^0(X', -mK_{X'})$. Therefore,
\[
\overline{v_t}(\overline{f}) = \overline{v_t}(D_{S_3}) = \min\{i+tj: c_{ij}\neq 0, i+t_0j = \alpha\} \geq \min \{i+tj: c_{ij}\neq 0\} = v_t(f).
\]

Among all pairs of integers $(i,j)$ with $i+t_0j = \alpha$ and $c_{ij} \neq 0$, let $(i_1,j_1)$ be the pair with the largest value in $j$ and $(i_2,j_2)$ be the pair with the smallest value in $j$. Then, for $t < t_0$ sufficiently close to $t_0$, we have
\[
i_1 + tj_1 = \min \{i+tj: c_{ij}\neq 0\}.
\]
This implies that
\[
v_t(f) = i_1 + tj_1 = \overline{v_t}(\overline{f}).
\]
Similarly, for $t > t_0$ sufficiently close to $t_0$, we have
\[
v_t(f) = i_2 + tj_2 = \overline{v_t}(\overline{f}).
\]
This completes the proof of this lemma.
\end{proof}

\begin{proof}[Proof of Theorem~\ref{thm:isomorphism_of_graded_rings_for_special_degenerations}]
It suffices consider the case $t > t_0$, since the case $t = t_0$ follows from the fact that $v_{t_0}(f) = \overline{v_{t_0}}(\overline{f})$ for all $f$, and the case $t< t_0$ is analogous. \\

\textit{Step 1.} We set up some notations.
By Theorem~\ref{thm:qdlt_fano_type_model_and_higher_rank_degeneration} and Theorem~\ref{thm:qdlt_Fano_type_model_on_special_degeneration}(f), for every $t>t_0$,
\[
\bigoplus_{m\in r\mathbb{N}} \gr_{\overline{v_t}}R_m' = \bigoplus_{m\in r\mathbb{N}} \gr_{\overline{v_t}}^\bullet H^0(X', -mK_{X'})
\]
is a finitely generated algebra. Furthermore, one may choose a set of elements \[\overline{f_1}\in R'_{m_1}\ldots,\overline{f_s}\in R'_{m_s}\] which is independent of $t$, such that their images generate $\bigoplus_{m\in r\mathbb{N}} \gr_{\overline{v_t}}^\bullet R_m'$. We may further assume that $\overline{f_k}\in \gr_E^{\alpha_k}R_{m_k}$ and pick a lifting $f_k\in R_{m_k}$ of $\overline{f_k}$.

By Lemma~\ref{lemma:v_t_and_bar_v_t_are_the_same_for_a_single_f}, there exists $t_2 > t_0$ such that for all $k=1,2,\ldots,s$ and $t\in [t_0, t_2]$,
\[
v_t (f_k) = \overline{v_t}(\overline{f_k}) = i_k +tj_k.
\]
This implies that the local equation of $f_k$ near $x$ can be written as
\[
f_k = x_1^{i_k} x_2^{j_k} + g_k(x_1,x_2)
\]
where $g_k$ is a formal power series in $x_1, x_2$ such that every monomial $x_1^i x_2^j$ with nonzero coefficient in $g_k$ satisfies
\[
i+tj > i_k + tj_k = v_t(f_k).
\]
Similarly, in a local coordinate $u_1,u_2$ on $S_3$, we can write the equation of $\overline{f_k}$ (i.e., the equation of the corresponding divisor on $S_3$) as
\[
\overline{f_k} = u_1^{i_k} u_2^{j_k} + \overline{g_k}(u_1,u_2)
\]
where $ \overline{g_k}$ is a formal power series in $u_1, u_2$ such that every monomial $u_1^i u_2^j$ with nonzero coefficient in $\overline{g_k}$ satisfies
\[
i+tj > i_k + tj_k = \overline{v_t}(\overline{f_k}).
\]
\\ 
\textit{Step 2.} We prove the following statement. Let $M\subseteq \mathbb{N}^3$ be the submonoid generated by $\{(i_k, j_k, m_k): 1\leq k\leq s\}$. Then for any $t\in (t_0,t_2)$, there is an isomorphism
\[
\bigoplus_{m\in r\mathbb{N}} \gr_{\overline{v_t}}^\bullet R_m'\cong \CC[M]
\]
as graded algebras, where the grading in $\CC[M]$ is given by the third coordinate.

Consider the ring homomorphism 
\[
\phi: \CC[M] \to \bigoplus_{m\in r\mathbb{N}} \gr_{\overline{v_t}}^\bullet R_m'
\]
determined by
\[
1\cdot (i_k, j_k, m_k)\mapsto \text{ the image of } \overline{f_k} \text{ in } \gr_{\overline{v_t}}^\bullet R_{m_k}'
\]
We need to show that $\phi$ is well-defined, i.e., if there exist $a_1,\ldots,a_s,b_1,\ldots,b_s\in \mathbb{N}$ such that
\[
\sum_{k=1}^s a_k (i_k,j_k,m_k) = \sum_{k=1}^s b_k (i_k,j_k,m_k)
\]
as elements of $M$, then
\[
\overline{v_t}\left(\prod_{k=1}^s{\overline{f_k}}^{a_k} \right) = \overline{v_t}\left(\prod_{k=1}^s{\overline{f_k}}^{b_k} \right) <\overline{v_t}\left(\prod_{k}{\overline{f_k}}^{a_k} -\prod_{k}{\overline{f_k}}^{b_k} \right),
\]
so that $\prod_{k}{\overline{f_k}}^{a_k} $ and $\prod_{k}{\overline{f_k}}^{b_k} $
have the same image in $\gr_{\overline{v_t}}^\bullet R'$.

Let $i' = \sum_k a_ki_k$ and $j' = \sum_k a_k j_k $. Then by step 1, we can write
\[
\prod_{k=1}^s{\overline{f_k}}^{a_k} = u_1^{i'} u_2^{j'} + \overline{h_a}(u_1,u_2)
\]
such that for any monomial $u_1^iu_2^j$ in $\overline{h_a}$,
\[
i+tj > i' + tj' = \overline{v_t}\left(\prod_{k=1}^s{\overline{f_k}}^{a_k} \right).
\]
Similarly, we can write
\[
\prod_{k=1}^s{\overline{f_k}}^{b_k} = u_1^{i'} u_2^{j'} + \overline{h_b}(u_1,u_2).
\]
Then
\[
\overline{v_t}\left(\prod_{k}{\overline{f_k}}^{a_k} -\prod_{k}{\overline{f_k}}^{b_k}\right) = \overline{v_t}\left(\overline{h_a} - \overline{h_b}\right)>i'+tj'  = \overline{v_t}\left(\prod_{k}{\overline{f_k}}^{a_k}\right),
\]
as desired. This proves that $\phi$ is well-defined.

Next, we show that $\phi$ is injective. Suppose that an element \[d = \sum_{(i,j,m)\in M} c_{i,j,m} (i,j,m)\in \CC[M]\] is mapped to 0 under $\phi$. Since each $1\cdot (i,j,m)$ is mapped to an element in $\gr_{\overline{v_t}}^{i+tj}R_m'$, we only need to consider the case when $m = m_0$ and $i+tj = v$ are both fixed, so that
\[
d = \sum_{(i,j,m_0)\in M: i+tj = v} c_{i,j,m_0} (i,j,m_0).
\]
Then, $\phi(d)$ is the image of the following element in $\gr_{\overline{v_t}}^{v}R_m'$:
\[
\sum_{(i,j,m_0)\in M: i+tj = v} c_{i,j,m_0} (u_1^i u_2^j + h_{i,j}(u_1,u_2)),
\]
where $h_{i,j}$ is a power series in $u_1,u_2$ such that any monomial in $h_{i,j}$ with nonzero coefficient has larger $\overline{v_t}$-valuation than $v$. Since $\phi(d) = 0$, we have
\[
\sum_{(i,j,m_0)\in M: i+tj = v} c_{i,j,m_0} u_1^i u_2^j = 0,
\]
which implies that $c_{i,j,m_0} = 0$ for all $(i,j)$ with $i+tj = v$. Therefore, $d = 0$ and $\phi$ is injective.

Finally, $\phi$ is surjective because the images of $\overline{f_1},\ldots,\overline{f_s}$ generate $\bigoplus_{m\in r\mathbb{N}} \gr_{\overline{v_t}}^\bullet R_m'$. \\

\textit{Step 3.} We prove that for all $t\in (t_0,t_2)$,
\[
\CC[M]\cong \bigoplus_{m\in r\mathbb{N}}\gr_{v_t}^\bullet R_m.
\]

We consider the ring homomorphism
\[
\varphi: \CC[M]\to \bigoplus_{m\in r\mathbb{N}}\gr_{v_t}^\bullet H^0(X, -mK_{X})
\]
determined by
\[
1\cdot (i_k, j_k, m_k)\mapsto \text{ the image of } {f_k} \text{ in } \gr_{{v_t}}^\bullet R_{m_k}.
\]
Similar to the argument in step 2 and using Lemma~\ref{lemma:v_t_and_bar_v_t_are_the_same_for_a_single_f}, one can show that $\varphi$ is well-defined, graded, and injective.

By step 2, $\CC[M]\cong \bigoplus_{m\in r\mathbb{N}}\gr_{\overline{v_t}}^\bullet R_m'$, so we have an equality
\begin{align*}
    \dim_\CC \CC[M]_m &= \dim_\CC \gr_{\overline{v_t}}^\bullet R_m' = \dim_\CC R_m' \\
    &= \dim_\CC R_m = \dim_\CC \gr_{v_t}^\bullet R_m
\end{align*}
for every $m\in r\NN$. Thus, $\varphi$ must be an isomorphism. This finishes the proof of this theorem.
\end{proof}

\begin{remark}
{\em Chenyang Xu informed us that there could be another proof of Theorem~\ref{thm:isomorphism_of_graded_rings_for_special_degenerations} using \cite[Lemma 5.38]{Xu25}}.
\end{remark}

\subsection{Proof of Theorem~\ref{mainthm:local_structure_of_dual_complex_of_special_degenerations}}
\begin{proof}
We follow the notations in Definition~\ref{def:notations_of_construction_of_special_degeneration}. Then $E'$ is the exceptional divisor of $\Tilde{S}_1\to S_1\cong \PP(1,p,q)$, which is the $(p,q)$-weighted blow-up at $x_1 = [1,0,0]$ along $L_1 = (u_1 = 0)$ and $L_2 = (u_2 = 0)$ by Lemma~\ref{lemma:Y_is_1,p,q_weighted_blow_up}. Then Part (a) follows from the fact that $(X',C')$ is crepant birational to the log canonical toric pair $(S_1, L_0 + L_1 + L_2)$.

Parts (b) and (c) follow from Theorem~\ref{thm:isomorphism_of_graded_rings_for_special_degenerations}.
\end{proof}

\section{Examples}

When $X$ is a general smooth del Pezzo surface and $C\sim -K_X$ is a normal crossing divisor, one can explicitly compute the partition of $\mathcal{D}^{KV}(X,C)$ as in Theorem~\ref{mainthm:structure_of_the_space_of_special_valuations} as well as the associated special $\GG_m$-equivariant degenerations of $X$. Then case where $X = \PP^2$ is studied in \cite[Remark 6.6]{LXZ22} and \cite[Appendix A]{ABBDILW23}. Here, we list some results when $X$ is the blow-up of $\PP^2$ at a point and $C$ is an irreducible nodal divisor on $X$. Note that this case does not appear in \cite{HP10} since every $\GG_m$-equivariant degenerations of $X$ has Picard rank at least 2.

\begin{example}

Let $X$ be the blow-up of $\PP^2$ at a point. Let $C\sim -K_X$ be an irreducible nodal curve with node $x$. For $t>0$, let $v_t$ denote the quasi-monomial valuation at $x$ with weight $(1,t)$ along the two branches of $C$. Let $\{g_k: k\in \ZZ\}$ be the sequence such that $(g_0, g_1,\ldots, g_5) = (1,1,1,1,2,4)$ and $g_{k+6} = 6g_{k+3} - g_k$ for all $k$.
\begin{itemize}
\item[(a)] If $t\in \left(\frac{g_{k+3}}{g_k}, \frac{g_{k+4}}{g_{k+1}}\right)$, then $v_t$ induces a special degeneration of $X$ into a fixed toric variety whose toric polytope is a quadrilateral with the following vertices:
\begin{align*}
(0,0), \left(\frac{2g_{k+2}+g_{k+3}}{g_{k+1}}, 0\right), \left(\frac{g_{k+3}}{g_{k+2}}, \frac{2g_{k+1}}{g_{k+2}}\right), \left(0, \frac{g_{k+1}+g_{k+2}}{g_{k+3}}\right), & \text{ if }k\equiv 0 \pmod 3,\\
(0,0),\left(\frac{g_{k+2}+2g_{k+3}}{g_{k+1}}, 0\right), \left(\frac{g_{k+3}}{g_{k+2}}, \frac{g_{k+1}}{g_{k+2}}\right), \left(0, \frac{2g_{k+1}+g_{k+2}}{g_{k+3}}\right), & \text{ if }k\equiv 1 \pmod 3,\\
(0,0),\left(\frac{g_{k+2}+g_{k+3}}{g_{k+1}}, 0\right), \left(\frac{2g_{k+3}}{g_{k+2}}, \frac{g_{k+1}}{g_{k+2}}\right), \left(0, \frac{g_{k+1}+2g_{k+2}}{g_{k+3}}\right), & \text{ if }k\equiv 2 \pmod 3.
\end{align*}
\item [(b)] If $t = \frac{g_{k+3}}{g_k}$, then $v_t$ induces a special degeneration of $X$ into a complete intersection in a weighted projective space:
\begin{align*}
\frac{\PP\left(1, g_k, g_{k+3}, g_{k+1}(g_{k+1}+g_{k+2}), g_{k+2}(g_{k+1}+g_{k+2})\right)}{(x_0x_3 - x_1^{2g_{k+2}}(x_1^{g_{k+3}}+x_2^{g_k}), x_0x_4 - x_2^{2g_{k+1}}(x_1^{g_{k+3}}+x_2^{g_k}))}, &\text{ if }k\equiv 0 \pmod 3,\\
\frac{\PP\left(1, g_k, g_{k+3}, g_{k+1}(2g_{k+1}+g_{k+2}), g_{k+2}(2g_{k+1}+g_{k+2})\right)}{(x_0x_3 - x_1^{g_{k+2}}(x_1^{g_{k+3}}+x_2^{g_k})^2, x_0x_4 - x_2^{g_{k+1}}(x_1^{g_{k+3}}+x_2^{g_k}))}, &\text{ if }k\equiv 1 \pmod 3, \\
\frac{\PP\left(1, g_k, g_{k+3}, g_{k+1}(g_{k+1}+2g_{k+2}), g_{k+2}(g_{k+1}+2g_{k+2})\right)}{(x_0x_3 - x_1^{g_{k+2}}(x_1^{g_{k+3}}+x_2^{g_k}), x_0x_4 - x_2^{g_{k+1}}(x_1^{g_{k+3}}+x_2^{g_k})^2)}, &\text{ if }k\equiv 2 \pmod 3.
\end{align*}
\item [(c)] If $t\not\in \left(3-2\sqrt{2}, 3+2\sqrt{2}\right)$, then $v_t$ is not special over $X$.
\end{itemize}
\end{example}


\bibliographystyle{alpha}
\bibliography{main}

\vspace{0.5cm}
\end{document}